\documentclass{article}
\usepackage{amssymb}
\usepackage{amsmath}
\usepackage{amsthm}
\usepackage{amsfonts}

\oddsidemargin =5mm \evensidemargin =5mm \topmargin =5mm \textwidth
=155mm \textheight =215mm

\begin{document}
\def\mathscr{\mathcal}

\newtheorem{theorem}{Theorem}[section]
\newtheorem{proposition}[theorem]{Proposition}
\newtheorem{lemma}[theorem]{Lemma}
\newtheorem{corollary}[theorem]{Corollary}
\newtheorem{conjecture}[theorem]{Conjecture}
\theoremstyle{remark}
\newtheorem{remark}[theorem]{Remark}

\title {$\ell$-adic GKZ hypergeometric sheaves and exponential sums
\thanks{Part of this paper was worked out during my visit of IHES.
I would like to thank IHES for its hospitality. I am especially
grateful to O. Gabber for pointing out many subtle points which I
ignored. My research is supported by the NSFC (10525107).}}

\author {Lei Fu\\
{\small Chern Institute of Mathematics and LPMC, Nankai University,
Tianjin 300071, P. R. China}\\
{\small leifu@nankai.edu.cn, leisfu@gmail.com}}

\date{}
\maketitle

\begin{abstract}
To a torus action on a complex vector space, Gelfand, Kapranov and
Zelevinsky introduce a system of differential equations, called the
GKZ hypergeometric system. Its solutions are GKZ hypergeometric
functions. We study the $\ell$-adic counterpart of the GKZ
hypergeometric system, which we call the $\ell$-adic GKZ
hypergeometric sheaf. It is an object in the derived category of
$\ell$-adic sheaves on the affine space over a finite field. Traces
of Frobenius on stalks of this object at rational points of the
affine space define the hypergeometric functions over the finite
field introduced by Gelfand and Graev. We prove that the $\ell$-adic
GKZ hypergeometric sheaf is perverse, calculate its rank, and prove
that it is irreducible under the non-resonance condition. We also
study the weight filtration of the GKZ hypergeometric sheaf,
determine its lisse locus, and apply our result to the study of
weights of twisted exponential sums.

\medskip
\noindent {\bf Key words:} $\ell$-adic GKZ hypergeometric sheaf,
Deligne-Fourier transformation, toric variety.

\medskip
\noindent {\bf Mathematics Subject Classification:} Primary 14F20;
Secondary 11T23, 14G15, 33C70.

\end{abstract}

\section*{Introduction}

Let $k$ be a finite field with $q$ elements of characteristic $p$,
and let $\ell$ be a prime number distinct from $p$. Throughout this
paper, we work with a nontrivial additive character $\psi:
k\to\overline {\mathbb Q}_\ell^\ast$. Let $\chi_1,\ldots,
\chi_m:k^\ast\to\overline {\mathbb Q}_\ell^\ast$ be multiplicative
characters, and let $f, f_1,\ldots, f_m\in k[t_1^{\pm 1}\ldots,
t_n^{\pm1}]$ be Laurent polynomials. In number theory, we often need
to study the mixed character sum
$$S_1=\sum_{t_1,\ldots, t_n\in k^\ast} \chi_1(f_1(t_1,\ldots,
t_n))\cdots \chi_m(f_m(t_1,\ldots, t_n))\psi(f(t_1,\ldots, t_n)).$$
Consider the twisted exponential sum
\begin{eqnarray*}
S_2 &=&\sum_{t_1,\ldots, t_{n+m}\in k^\ast}
\chi_1^{-1}(t_{n+1})\cdots \chi_m^{-1}(t_{n+m})\\
&&\qquad\qquad\qquad \psi\left(f(t_1,\ldots,
t_n)+t_{n+1}f_1(t_1,\ldots, t_n)+\cdots +t_{n+m}f_m(t_1,\ldots,
t_n)\right).
\end{eqnarray*}
One can show if $\chi_i$ $(i=1,\ldots, m)$ are nontrivial, then
$$S_2=g(\chi_1^{-1},\psi)\cdots g(\chi_m^{-1},\psi)S_1,$$ where
$$g(\chi_i^{-1},\psi)=\sum_{x\in k^\ast} \chi_i^{-1}(x)\psi(x)$$
are Gauss sums. As Gauss sums are well-understood, the study of
$S_1$ is reduced to the study of $S_2$.

Let $$A=\left(\begin{array}{ccc} w_{11}&\cdots&w_{1N}\\
\vdots&&\vdots\\
w_{n1}&\cdots&w_{nN}\end{array} \right)$$ be an $(n\times N)$-matrix
of rank $n$ with integer entries. Denote the column vectors of $A$
by ${\mathbf w}_1,\ldots, {\mathbf w}_N\in \mathbb Z^n$. It defines
an action of the $n$-dimensional torus $\mathbb T_{\mathbb
Z}^n=\mathrm{Spec}\, \mathbb Z[t_1^{\pm 1},\ldots, t_n^{\pm 1}]$ on
the $N$-dimensional affine space $\mathbb A_{\mathbb
Z}^N=\mathrm{Spec}\, \mathbb Z[x_1,\ldots, x_N]$:
$$
\mathbb T_{\mathbb Z}^n\times \mathbb A_{\mathbb Z}^N \to \mathbb
A_{\mathbb Z}^N, \quad \Big((t_1,\ldots, t_n),(x_1,\ldots,
x_N)\Big)\mapsto (t_1^{w_{11}}\cdots t_n^{w_{n1}}x_1,\ldots,
t_1^{w_{1N}}\cdots t_n^{w_{nN}}x_N).
$$
Let $\chi_1,\ldots, \chi_n:k^\ast\to\overline {\mathbb Q}_\ell^\ast$
be multiplicative characters, and let $a_j\in k$. Write the twisted
exponential sum $S_2$ in the form
$$\sum_{t_1,\ldots, t_n\in k^\ast} \chi_1(t_1)\cdots
\chi_n(t_n)\psi\left(\sum_{j=1}^N a_j t_1^{w_{1j}}\cdots
t_n^{w_{nj}}\right).$$ In the case where $\chi_1,\ldots, \chi_n$ are
trivial, Denef-Loeser (\cite{DL}) and Adolphson-Sperber (\cite{AS1},
\cite{AS2}) study this exponential sum. The general case is treated
in \cite{AS3} and \cite{Fu}.

A method emphasized by Gelfand-Kapranov-Zelevinsky and called the
``$A$-philosophy" in the book \cite[5.1]{GKZ3} is that instead of
studying the above twisted exponential sum for a fixed $(a_1,\ldots,
a_N)$, one could treat $(a_1,\ldots, a_N)$ as indeterminate, or
equivalently, one treats $(a_1,\ldots, a_N)$ as parameters and study
the corresponding family of twisted exponential sums. In \cite{GG}
and \cite{GGR}, Gelfand and Graev define the \emph{hypergeometric
function over the finite field} to be
$$\mathrm{Hyp}_\psi(x_1,\ldots, x_N; \chi_1,\ldots, \chi_n)
=\sum_{t_1,\ldots, t_n\in k^\ast}\chi_1(t_1)\cdots
\chi_n(t_n)\psi\Big( \sum_{j=1}^N x_j t_1^{w_{1j}}\cdots
t_n^{w_{nj}}\Big).$$ Its value at $(a_1,\ldots, a_N)$ is the twisted
exponential sum introduced above.

\bigskip
Let $(\gamma_1,\ldots, \gamma_n)\in \mathbb C^n$ be a fixed
parameter. In \cite{GKZ1}, Gelfand, Kapranov and Zelevinsky define
the \emph{$A$-hypergeometric system} to be the system of
differential equations
\begin{eqnarray*}
&&\sum_{j=1}^N w_{ij} x_j\frac{\partial f}{\partial x_j}+\gamma_i
f=0 \quad (i=1,\ldots, n),\\
&& \prod_{a_j>0} \left(\frac{\partial}{\partial x_j}\right)^{a_j}f=
\prod_{a_j<0} \left(\frac{\partial}{\partial x_j}\right)^{-a_j}f,
\end{eqnarray*}
where for the second system of equations, $(a_1,\ldots,
a_N)\in\mathbb Z^N$ goes over the family of integral linear
relations
$$\sum_{j=1}^N
a_j{\mathbf w}_j=0$$ among ${\mathbf w}_1,\ldots, {\mathbf w}_N$.
Holomorphic solutions of the $A$-hypergeometric systems are called
\emph{$A$-hypergeometric functions}. We often call the
$A$-hypergeometric system as the \emph{GKZ hypergeometric system}.
Integral representations of solutions of the GKZ hypergeometric
system are of the form
$$f(x_1,\ldots, x_N, \gamma_1,\ldots, \gamma_n)=\int_C
t_1^{\gamma_1}\cdots t_n^{\gamma_n}e^{\sum_{j=1}^N x_j
t_1^{w_{1j}}\cdots
t_n^{w_{nj}}}\frac{dt_1}{t_1}\cdots\frac{dt_n}{t_n},$$ where $C$ is
a cycle in the complex torus $\mathbb T_{\mathbb C}^n$. (Confer
\cite[Corollary 2 in \S 4.2]{GGR1}). The hypergeometric function
over finite field is an arithmetic analogue of the above integral.
For this reason, we also call $\mathrm{Hyp}_\psi(x_1,\ldots, x_N;
\chi_1,\ldots, \chi_n)$ as the \emph{GKZ hypergeometric sum}.

Note that $\mathrm{Hyp}_\psi(x_1,\ldots, x_N; \chi_1,\ldots,
\chi_n)$ is homogeneous with respect to the torus action in the
sense that
\begin{eqnarray*}
&&\mathrm{Hyp}_\psi(t_1^{w_{11}}\cdots t_n^{w_{n1}}x_1,\ldots,
t_1^{w_{1N}}\cdots t_n^{w_{nN}}x_N; \chi_1,\ldots, \chi_n)\\
&=&
\chi_1^{-1}(t_1)\cdots\chi_n^{-1}(t_n)\mathrm{Hyp}_\psi(x_1,\ldots,
x_N; \chi_1,\ldots, \chi_n)
\end{eqnarray*}
for any $t_1,\ldots, t_n\in k^\ast$ and $x_1,\ldots, x_N\in k$.
Hypergeometric sums over finite fields defined by Katz in
\cite[8.2.7]{K1} are special cases of the GKZ hypergeometric sum.
Indeed, let $A$ be the $((n+m-1)\times(n+m))$-matrix
$$A=\left(I_{n+m-1}, \mathbf{w}_{n+m} \right),$$ where $I_{n+m-1}$
is the identity matrix of size $n+m-1$, and $\mathbf {w}_{n+m}$ is
the transpose of the vector
$$(\underbrace{-1,\ldots,-1}_{n-1}, \underbrace{1, \ldots, 1}_{m}).$$
Then the GKZ hypergeometric sum associated to this matrix evaluated
at
$$(x_1,\ldots, x_{n+m})=(\underbrace{1,\ldots,1}_{n-1},
\underbrace{-1, \ldots, -1}_{m}, x)$$ is
\begin{eqnarray*}
&&\mathrm{Hyp}_\psi(\underbrace{1,\ldots,1}_{n-1}, \underbrace{-1,
\ldots, -1}_{m}, x;\chi_1,\ldots,
\chi_{n+m-1})\\
&=& \sum_{t_1,\ldots, t_{n+m-1}}\chi_1(t_1)\cdots
\chi_{n+m-1}(t_{n+m-1})\psi\left(t_1+\cdots +t_{n-1}-t_n-\cdots
-t_{n+m-1}+\frac{x t_n\cdots t_{n+m-1}}{t_1\cdots t_{n-1}}\right)
\end{eqnarray*}
The last expression is exactly Katz's hypergeometric sum
$\mathrm{Hyp}(\psi,\chi_1,\ldots, \chi_{n-1},1;\chi_n^{-1},\ldots,
\chi_{n+m-1}^{-1})(x)$. So Katz's hypergeometric sum can be
expressed in terms of the GKZ hypergeometric sum. Conversely, using
the homogeneity property of the GKZ hypergeometric sum with respect
to the torus action, one can also express the GKZ hypergeometric sum
associated to the above matrix $A$ evaluated at an arbitrary point
$(x_1,\ldots, x_{n+m})$ in terms of the (one-variable) Katz's
hypergeometric sum. An interesting special case of Katz's
hypergeometric sum is the Kloosterman sum
$$\mathrm {Kl}_\psi(\chi_1,\ldots, \chi_n,1)(x)=\sum_{t_1,\ldots, t_n}\chi_1(t_1)
\cdots \chi_n(t_n)\psi\left(t_1+\cdots+t_n+\frac{x}{t_1\ldots
t_n}\right).$$ It is the GKZ hypergeometric sum
$\mathrm{Hyp}_\psi(x_1,\ldots, x_{n+1};\chi_1,\ldots,\chi_n)$
associated to the $(n\times (n+1))$-matrix
$$A=\left(\begin{array}{cccc}
1&&& -1\\
&\ddots&&\vdots\\&&1&-1
\end{array}\right)$$
evaluated at $(x_1,\ldots, x_{n+1})=(1,\ldots, 1,x)$.

We say $A$ satisfies the \emph{nonconfluence condition} if there
exist integers $c_1,\ldots, c_n\in\mathbb Z$ such that
$$\sum_{i=1}^n c_iw_{ij}=1\quad (j=1,\ldots, N),$$
that is, $\mathbf w_j$ $(j=1,\ldots, N)$ lie in the hyperplance
$\sum_{i=1}^n c_iw_i=1.$ If $A$ is the $((n+m-1)\times(n+m))$-matrix
corresponding to Katz's hypergeometric sum
$\mathrm{Hyp}(\psi,\chi_1,\ldots, \chi_{n-1},1;\chi_n^{-1},\ldots,
\chi_{n+m-1}^{-1})(x)$ described above, then $A$ satisfies the
nonconfluence condition if and only if $m=n$.

\begin{theorem}[Gelfand-Kapranov-Zelevinsky, \cite{GKZ1, GKZ2}]
Suppose that $A$ satisfies the nonconfluence condition, that
${\mathbf w}_1,\ldots, {\mathbf w}_N$ generate $\mathbb Z^n$, and
that the ring $\mathbb C[t_1^{w_{11}}\cdots t_n^{w_{n1}}, \ldots,
t_1^{w_{1N}}\cdots t_n^{w_{nN}}]$ is normal. Let $\Delta$ be the
convex hull of $\{0,{\mathbf w}_1,\ldots, {\mathbf w}_N\}$.

(i) The GKZ hypergeometric system is holonomic.

(ii) The dimension of the space of GKZ hypergeometric functions at a
generic point is $n!\mathrm{vol}(\Delta)$.

(iii) If $(\gamma_1,\ldots, \gamma_n)$ satisfies the so-called
non-resonance condition, then the sheaf of GKZ hypergeometric
functions defines an irreducible local system on a Zariski dense
open subset of $\mathbb C^N$.
\end{theorem}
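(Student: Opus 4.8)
\medskip
\noindent\textbf{Proof idea.} The plan is to transport the problem to the Fourier dual side, where the GKZ system becomes a rank one connection on the open torus orbit of the affine toric variety attached to $A$, and to read off the three assertions from standard facts about toric varieties, Newton polyhedra and intermediate extensions. Write $M_A(\gamma)=\mathcal{D}_{\mathbb{A}^N}/(\{E_i\}_i+\{\Box_a\}_a)$ for the $\mathcal{D}$-module presenting the GKZ system, with $E_i=\sum_{j=1}^N w_{ij}x_j\partial_j+\gamma_i$ and $\Box_a=\prod_{a_j>0}\partial_j^{a_j}-\prod_{a_j<0}\partial_j^{-a_j}$ over the relations $a\in\mathbb{Z}^N$, $\sum_j a_j\mathbf{w}_j=0$. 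Apply the Fourier--Laplace transform $\mathcal{F}$ (interchanging $x_j$ with $\partial_j$): the $\Box_a$ become the toric binomials $x^{a_+}-x^{a_-}$, whose common zero scheme is the irreducible $n$-dimensional affine toric variety $X_A=\mathrm{Spec}\,\mathbb{C}[t_1^{w_{11}}\cdots t_n^{w_{n1}},\ldots,t_1^{w_{1N}}\cdots t_n^{w_{nN}}]$, while the $E_i$ become Euler operators with $\gamma_i$ shifted by $\sum_j w_{ij}$. Thus $\widehat M:=\mathcal{F}(M_A(\gamma))$ is supported on $X_A$; since $\mathbf{w}_1,\ldots,\mathbf{w}_N$ generate $\mathbb{Z}^n$ the open orbit $j\colon\mathbb{T}^n\hookrightarrow X_A$ is isomorphic to $\mathbb{T}^n$, and in coordinates adapted to the torus action the transformed Euler operators read $s_i\partial_{s_i}+\gamma_i'$, so $\widehat M|_{\mathbb{T}^n}$ is the rank one Kummer-type connection $\mathcal{L}_\gamma$ with monodromy exponents $\gamma_i'$, while $\widehat M$ itself is some $\mathcal{D}$-module extending $\mathcal{L}_\gamma$ across the toric boundary.

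For (i): under the nonconfluence condition every relation $\sum_j a_j\mathbf{w}_j=0$ satisfies $\sum_{a_j>0}a_j=\sum_{a_j<0}(-a_j)$, so the principal symbol of $\Box_a$ is the binomial $\xi^{a_+}-\xi^{a_-}$; these cut out the $n$-dimensional toric variety in each cotangent fibre $\mathbb{A}^N_\xi$, and the symbols $\sum_j w_{ij}x_j\xi_j$ of the $E_i$ then cut the resulting product down to a variety of dimension $N$ --- a union of conormal varieties to the torus orbits in $X_A$. Hence the characteristic variety of $M_A(\gamma)$ is Lagrangian and $M_A(\gamma)$ is holonomic; since moreover $\sum_i c_i E_i=\sum_j x_j\partial_j+\sum_i c_i\gamma_i$ is a global Euler operator, $M_A(\gamma)$ is in fact regular holonomic. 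For (ii): at a generic $x$ the module $M_A(\gamma)$ is an integrable connection, so its rank there is the dimension of its local solution space; by the integral representation recalled above and Poincar\'e duality this equals $\dim H^n_{\mathrm{dR}}$ of $\mathbb{T}^n$ with the twisted connection $d+d(\sum_j x_j t^{\mathbf{w}_j})+\sum_i\gamma_i\,d\log t_i$ (the lower twisted cohomology vanishing for generic $x$), and for generic $x$ --- where $\sum_j x_j t^{\mathbf{w}_j}$ is non-degenerate with Newton polyhedron at infinity $\Delta=\mathrm{conv}\{0,\mathbf{w}_1,\ldots,\mathbf{w}_N\}$ --- this dimension is $n!\,\mathrm{vol}(\Delta)$ by the theorem of Adolphson--Sperber, equivalently by the Bernstein--Kushnirenko--Khovanskii count of the critical points $\sum_j w_{ij}x_j t^{\mathbf{w}_j}+\gamma_i=0$. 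The normality hypothesis is used to pass from generic $\gamma$ to the given $\gamma$: by Hochster's theorem the toric ring is Cohen--Macaulay, which forces the holonomic rank to be $n!\,\mathrm{vol}(\Delta)$ for every $\gamma$, not merely for generic ones.

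For (iii): assume $\gamma$ non-resonant, i.e.\ (after the shift) $\gamma'$ is not congruent modulo $\mathbb{Z}^n$ to any point of the $\mathbb{C}$-linear span of a facet of $\mathbb{R}_{\ge 0}\Delta$. The key point is that this forces the extensions $j_!\mathcal{L}_\gamma$, $j_{!*}\mathcal{L}_\gamma$ and $j_+\mathcal{L}_\gamma$ to coincide: transverse to the torus orbit attached to a proper face $\sigma$ of the defining cone, $\mathcal{L}_\gamma$ is a product of Kummer connections whose exponents non-resonance keeps out of $\mathbb{Z}$, so the local cohomology along that orbit vanishes in the degrees that matter, and no nonzero sub- or quotient $\mathcal{D}$-module can be supported on the boundary of $X_A$. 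Therefore $\widehat M=j_{!*}\mathcal{L}_\gamma$ is the intermediate extension of the rank one --- hence irreducible --- connection $\mathcal{L}_\gamma$, and so is a simple $\mathcal{D}$-module; as $\mathcal{F}$ is an auto-equivalence of the category of holonomic $\mathcal{D}$-modules, $M_A(\gamma)$ is simple, and restricting it to the dense open set where it is an integrable connection yields an irreducible local system --- the sheaf of GKZ hypergeometric functions on that set.

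The step I expect to be the main obstacle is the vanishing of boundary local cohomology in (iii): one must go orbit by orbit through the normal toric variety $X_A$ and verify, using the precise combinatorics of the non-resonance condition, that $\mathcal{L}_\gamma$ exhibits no discrepancy between extension-by-zero and full direct image along any of them. A second, lesser point --- upgrading generic $\gamma$ to arbitrary $\gamma$ in (ii) --- genuinely requires the Cohen--Macaulayness supplied by the normality assumption, without which rank jumps can occur.
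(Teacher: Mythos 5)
You should first note that the paper does not prove this statement: it is Theorem 0.1, quoted from \cite{GKZ1, GKZ2} as background, and what the paper actually proves are the $\ell$-adic analogues (Theorems 0.3--0.5). That said, your sketch is essentially the $\mathcal D$-module translation of the strategy the paper carries out in \S 1 for the $\ell$-adic case: your ``transport to the Fourier dual side, where the system becomes a rank one connection on the open orbit of the affine toric variety'' is exactly Lemma 1.1 ($\mathrm{Hyp}_\psi(\chi)\cong FT_\psi(\iota_!\mathscr K_\chi[n])$), and your ``non-resonance forces $j_!\mathcal L_\gamma= j_{!*}\mathcal L_\gamma=j_+\mathcal L_\gamma$ by an orbit-by-orbit computation'' is exactly Lemmas 1.3(i) and 1.4. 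Your identification of the roles of nonconfluence (regularity, Hotta) and of normality/Cohen--Macaulayness (ruling out rank jumps so that the rank formula holds for every $\gamma$, not just generic $\gamma$) is also accurate.

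There is, however, one genuine gap in your part (iii), and it is precisely the point at which the $\mathcal D$-module and $\ell$-adic situations differ. In the $\ell$-adic setting the hypergeometric object is \emph{defined} as $R\pi_{2!}(\pi_1^*\mathscr K_\chi\otimes F^*\mathscr L_\psi)[n+N]$, so Lemma 1.1 identifies it with the Fourier transform of the extension by zero $\iota_!\mathscr K_\chi[n]$ with no hypothesis on $\chi$; non-resonance is then only needed to upgrade $\iota_!$ to $\iota_{!*}$. In the $\mathcal D$-module setting $M_A(\gamma)$ is defined by generators and relations, and you only establish that $\widehat M=\mathcal F(M_A(\gamma))$ is ``some'' holonomic extension of $\mathcal L_\gamma$ across the toric boundary. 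Knowing $j_!\mathcal L_\gamma\cong j_+\mathcal L_\gamma$ does not by itself force $\widehat M\cong j_{!*}\mathcal L_\gamma$: the adjunction maps $j_!\mathcal L_\gamma\to\widehat M\to j_+\mathcal L_\gamma$ compose to an isomorphism, which only shows $j_{!*}\mathcal L_\gamma$ is a direct summand of $\widehat M$; a complementary summand supported on the boundary orbits (or on the origin of $\mathbb A^N$ before transforming) is not excluded. You must separately prove that $\widehat M$ has no nonzero sub- or quotient module supported on $X_A\setminus\mathbb T^n$ --- equivalently, that $M_A(\gamma)$ has no composition factor whose Fourier transform is supported on a proper orbit closure. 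This is where the non-resonance condition enters a second time (via invertibility of the contiguity/Euler operators along each facet, or via the vanishing of the relevant local cohomology of the toric ring), and it is a real theorem rather than a formal consequence of $j_!=j_+$; for resonant $\gamma$ the GKZ module genuinely can be larger than the intermediate extension. Your closing remark correctly locates the orbit-by-orbit local cohomology computation as the hard step, but the sketch as written conflates the two separate assertions ``$j_!=j_+$'' and ``$\widehat M$ has no boundary constituents.''
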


We refer the reader to \cite[\S2.9]{GKZ2} for the definition of the
non-resonance condition.

A theorem of Hotta (\cite[\S II 6]{H}) says that if $A$ satisfies
nonconfluence condition, then the GKZ hypergeometric system is
regular holonomic. In \cite{A}, Adolphson studies the GKZ-system
without the nonconfluence condition, and he proves the following.

\begin{theorem}[Adolphson, \cite{A}] ${}$

(i) The GKZ hypergeometric system is holonomic.

(ii) Suppose $(\gamma_1,\ldots, \gamma_n)$ satisfies the so-called
semi-nonresonance condition. Then the dimension of the space of GKZ
hypergeometric functions at a generic point is
$n!\mathrm{vol}(\Delta)/[\mathbb Z^n: M']$, where $\Delta$ is the
convex hull of $\{0,{\mathbf w}_1,\ldots, {\mathbf w}_n\}$, and $M'$
is the subgroup of $\mathbb Z^n$ generated by $\{{\mathbf
w}_1,\ldots, {\mathbf w}_N\}$.
\end{theorem}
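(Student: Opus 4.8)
The plan is to present the GKZ system as a module over the Weyl algebra and to control its characteristic variety. Let $D=\mathbb C\langle x_1,\dots,x_N,\partial_1,\dots,\partial_N\rangle$, set $Z_i=\sum_{j=1}^N w_{ij}x_j\partial_j$, and for each integral relation $u=(u_1,\dots,u_N)$ with $\sum_j u_j\mathbf w_j=0$ put $\square_u=\prod_{u_j>0}\partial_j^{u_j}-\prod_{u_j<0}\partial_j^{-u_j}$. The $D$-module attached to the system is $M_A(\gamma)=D\big/\big(\sum_i D(Z_i+\gamma_i)+\sum_u D\,\square_u\big)$, and the GKZ hypergeometric functions are its holomorphic solutions. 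By Bernstein's inequality $\dim\operatorname{Ch}(M_A(\gamma))\ge N$ whenever $M_A(\gamma)\ne 0$, so part (i) is equivalent to the reverse bound $\dim\operatorname{Ch}(M_A(\gamma))\le N$, and part (ii) asks for the holonomic rank $\operatorname{rk}M_A(\gamma)=\dim_{\mathbb C(x)}\big(\mathbb C(x)\otimes_{\mathbb C[x]}M_A(\gamma)\big)$, which by the general theory of holonomic systems (Cauchy--Kovalevskaya--Kashiwara) is the dimension of the solution space at a point off the singular locus.

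For (i) I would work in $\operatorname{gr}D=\mathbb C[x_1,\dots,x_N,\xi_1,\dots,\xi_N]$ and bound the zero locus of the principal symbols of the generators: the linear forms $\ell_i=\sum_j w_{ij}x_j\xi_j$ from the Euler operators, and, from each $\square_u$, the binomial $\xi^{u^+}-\xi^{u^-}$ when $|u^+|=|u^-|$ and a single monomial $\xi^{u^+}$ or $\xi^{u^-}$ otherwise. The point that makes the absence of the nonconfluence condition delicate is exactly that these symbols no longer generate the toric ideal $I_A$, so one cannot invoke $\dim V(I_A)=n$ directly. Instead I would stratify $\mathbb A^{2N}$ by the support $S=\{j:\xi_j\ne 0\}$: on the stratum with support $S$ the monomial symbols force the columns $\{\mathbf w_j:j\in S\}$ to lie on a proper face $\tau$ of $\Delta$, the binomial symbols carve out an open piece of the affine variety of the toric ideal of $A_\tau$ whose dimension is governed by the semigroup ring $\mathbb C[t^{\mathbf w_j}:j\in S]$, and the Euler symbols $\ell_i$ impose $|S|$ further independent linear conditions transverse to it. A face-by-face count then gives total dimension $\le N$. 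This combinatorial bookkeeping on the dimensions of the semigroup rings attached to all faces of $\Delta$ is the ``rings generated by monomials'' part of the argument.

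For (ii) the rank equals the multiplicity of the zero section $T^\ast_{\mathbb A^N}\mathbb A^N$ in the characteristic cycle of $M_A(\gamma)$, equivalently $\dim_{\mathbb C(x)}\big(\mathbb C(x)\otimes\operatorname{gr}M_A(\gamma)\big)$ for the order filtration. One inequality holds for every $\gamma$: a Koszul/homological analysis localized at a generic $x$ identifies a lower bound for this multiplicity with $\deg$ of the projective toric variety defined by $A$ together with the origin, which by the standard triangulation computation is the normalized volume $n!\operatorname{vol}(\Delta)$, divided by $[\mathbb Z^n:M']$ because the monomial parametrization factors through the sublattice $M'$. For the reverse inequality — that no rank jump occurs — one uses the semi-nonresonance hypothesis on $\gamma$: it forces the other components of $\operatorname{Ch}(M_A(\gamma))$, namely those living over the coordinate subspaces $\{x_j=0\ (j\notin\tau)\}$ attached to the proper faces $\tau$ of $\Delta$, to contribute nothing to the generic rank — equivalently, $M_A(\gamma)$ acquires no subquotient concentrated along such a stratum that would produce extra local solutions. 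This fails for resonant $\gamma$, which is why the formula is only asserted under semi-nonresonance. One can make the matching upper bound concrete by writing down $n!\operatorname{vol}(\Delta)/[\mathbb Z^n:M']$ linearly independent $\Gamma$-series solutions — discrete analogues of the integral representations displayed in the introduction — which semi-nonresonance guarantees are well defined and independent, and which span by the holonomicity bound of (i).

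The step I expect to be the main obstacle is the face-by-face analysis underlying both parts: without the nonconfluence condition the symbol ideal is genuinely smaller than $I_A$, so one must understand the semigroup rings $\mathbb C[t^{\mathbf w_j}:j\in\tau]$ attached to all faces $\tau$ of $\Delta$ uniformly — their dimensions for (i), their degrees for (ii) — and then verify that semi-nonresonance is exactly the hypothesis needed to kill the lower-dimensional boundary strata when one upgrades the dimension bound of (i) to the precise multiplicity asserted in (ii).
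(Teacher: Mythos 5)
The paper does not prove this statement: Theorem 0.2 is quoted as background and attributed to Adolphson \cite{A}; the only ``proof'' in the text is the citation. So there is no in-paper argument to compare yours against. What the paper actually proves is the $\ell$-adic analogue (Theorem 0.4(i) for the rank, Theorem 0.3 for perversity/irreducibility), and it does so by a completely different route --- realizing $\mathrm{Hyp}_\psi(\chi)$ as the Deligne--Fourier transform of $\iota_!\mathscr K_\chi[n]$ and importing the cohomological dimension count from Denef--Loeser and \cite{Fu} --- not by a characteristic-variety computation on a $D$-module.

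Judged on its own terms, your sketch correctly identifies the shape of Adolphson's argument: bound $\mathrm{Ch}(M_A(\gamma))$ via the principal symbols, stratify by the support of $\xi$ and hence by faces of $\Delta$, read off dimensions and degrees of the semigroup rings attached to faces, and use semi-nonresonance to suppress boundary contributions to the generic rank. But as written it is a plan, not a proof. The two steps that carry all the content --- the face-by-face count showing the symbol variety has dimension at most $N$ in the confluent case, and the verification that semi-nonresonance forces the components of the characteristic cycle lying over the coordinate strata to contribute nothing to the multiplicity of the zero section --- are stated as goals and explicitly flagged by you as the expected obstacles; neither is carried out, and the second is precisely where the hypothesis on $\gamma$ must enter quantitatively rather than as a slogan. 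A further caution: for confluent $A$ the principal symbols for the order filtration generate an ideal strictly smaller than the toric ideal $I_A$, and it is not automatic that one can compute the holonomic rank as the generic fibre dimension of the associated graded module for that filtration; Adolphson's device for handling this (passing to the cone over $\Delta$ and analyzing the associated ring generated by monomials, which is the technical heart of \cite{A} and the source of its title) is exactly the ingredient missing from your outline.
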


We refer the reader to \cite[pg. 284]{A} for the definition of the
semi-nonresonance condition.

Adolphson conjectures that even without the nonconfluence condition,
the sheaf of GKZ hypergeometric functions defines an irreducible
local system on a Zariski dense open subset of $\mathbb C^N$ if
$(\gamma_1,\ldots, \gamma_n)$ satisfies the non-resonance condition.

In this paper, we introduce the $\ell$-adic GKZ hypergeometric
sheaf, and we prove theorems of Gelfand-Kapranov-Zelevinsky and
Adolphson, and verify Adolphson's conjecture in this context.
Moreover, we study the weight filtration of the GKZ hypergeometric
sheaf. Specializing our result to a rational point, we recover the
main results in \cite{AS1, AS2, AS3, DL, Fu} about the weights of
exponential sums.

Denote by $\mathbb A^N$ (resp. $\mathbb T^n$) the $N$-dimensional
affine space (resp. $n$-dimensional torus) over the finite field
$k$. Let $\mathscr L_\psi$ be the Artin-Schreier sheaf on $\mathbb
A^1$ associated to the nontrivial additive character $\psi$. For any
character $\chi:(k^\ast)^n\to\overline{\mathbb Q}_\ell^\ast$, let
$\mathscr K_\chi$ be the Kummer sheaf on $\mathbb T^n$ associated to
$\chi$. (Confer \cite[Sommes trig. 1.7]{SGA 4 1/2} for the
definition and properties of the Kummer sheaf and the Artin-Schreier
sheaf). Let
$$\pi_1:\mathbb T^n\times \mathbb A^N\to \mathbb T^n, \quad
\pi_2:\mathbb T^n\times \mathbb A^N\to \mathbb A^N$$ be the
projections, and let $F$ be the morphism
$$F:\mathbb T^n\times \mathbb A^N\to
\mathbb A^1,\quad (t_1,\ldots, t_n, x_1,\ldots, x_N)\mapsto
\sum_{j=1}^Nx_jt_1^{w_{1j}}\cdots t_n^{w_{nj}}.$$ We define the
\emph{$\ell$-adic GKZ hypergeometric sheaf} associated to the Kummer
sheaf $\mathscr K_\chi$ and the vectors $\mathbf w_1,\ldots, \mathbf
w_N$ in $\mathbb Z^n$ to be the object in the derived category
$D_c^b(\mathbb A^N, \overline{\mathbb Q}_\ell)$ of $\ell$-adic
sheaves on $\mathbb A^N$ given by
$$\mathrm{Hyp}_{\psi}(\chi)=R\pi_{2!}\Big(\pi_1^\ast\mathscr K_\chi\otimes F^\ast \mathscr
L_\psi\Big)[n+N].$$ (Confer \cite[1.1.2]{D} for the definition of
the derived category $D_c^b(\mathbb A^N,\overline{\mathbb Q}_\ell)$
of $\ell$-adic sheaves.) If we take $\mathscr K_\chi=\mathscr
K_{\chi_1}\boxtimes\cdots\boxtimes \mathscr K_{\chi_n}$, then by the
Grothendieck trace formula (\cite[Rapport 3.2]{SGA 4 1/2}), for any
$x=(x_1,\ldots, x_N)\in \mathbb A^N(k)$, we have
$$\mathrm{Hyp}_\psi(x_1,\ldots, x_N;\chi_1,\ldots,\chi_n)=
(-1)^{n+N}\mathrm{Tr}\Big(\mathrm
{Frob}_x,\big(\mathrm{Hyp}_{\psi}(\chi)\big)_{\bar x}\Big),$$ where
$\mathrm{Frob}_x$ is the geometric Frobenius at $x$. We have the
following.

\begin{theorem} Let $\chi:(k^\ast)^n\to\overline{\mathbb
Q}_\ell$ be a character.

(i) $\mathrm{Hyp}_\psi(\chi)$ is a mixed perverse sheaf on $\mathbb
A^N$ of weights $\leq n+N$.

(ii) Suppose $\mathbf{w}_1,\ldots, \mathbf{w}_N$ generate $\mathbb
Z^n$, and suppose $\chi$ satisfies the non-resonance condition
defined below. Then $\mathrm{Hyp}_\psi(\chi)$ is an irreducible pure
perverse sheaf of weight $n+N$.
\end{theorem}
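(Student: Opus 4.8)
The plan is to identify $\mathrm{Hyp}_\psi(\chi)$ with a Deligne--Fourier transform and then to read off the assertions from the standard properties of that functor together with Deligne's weight theorems. Let $g\colon \mathbb T^n\to \mathbb A^N$ be the morphism $t\mapsto (t^{\mathbf w_1},\ldots,t^{\mathbf w_N})$, where the target is regarded as the affine space dual to the one carrying $\mathrm{Hyp}_\psi(\chi)$, and let $\mu\colon \mathbb A^N\times\mathbb A^N\to\mathbb A^1$ be the canonical pairing, so that $F=\mu\circ(g\times\mathrm{id})$ and $\pi_2=\mathrm{pr}_2\circ(g\times\mathrm{id})$. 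The projection formula along $g\times\mathrm{id}$ together with proper base change then yields a canonical isomorphism
$$\mathrm{Hyp}_\psi(\chi)\;\cong\;\mathrm{FT}_\psi\bigl(Rg_{!}\mathscr K_\chi[n]\bigr),$$
where $\mathrm{FT}_\psi(K)=R\mathrm{pr}_{2!}\bigl(\mathrm{pr}_1^\ast K\otimes \mu^\ast\mathscr L_\psi\bigr)[N]$ is the Deligne--Fourier transform on $\mathbb A^N$. Since $A$ has rank $n$, the kernel of the torus homomorphism underlying $g$ is a finite group scheme, so $g$ is quasi-finite, and it is affine, being a morphism of affine schemes; hence $Rg_{!}$ is $t$-exact for the perverse $t$-structure. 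Therefore $Rg_{!}\mathscr K_\chi[n]$ is a perverse sheaf, and by Deligne's theorem it is mixed of weights $\le n$, because $\mathscr K_\chi[n]$ is pure of weight $n$.

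Part (i) now follows from the Katz--Laumon theory of $\mathrm{FT}_\psi$: it is $t$-exact for the perverse $t$-structure, and it raises weights by $N$, in the sense that it carries objects of weights $\le w$ to objects of weights $\le w+N$ (and pure objects of weight $w$ to pure objects of weight $w+N$). Applying this to $Rg_{!}\mathscr K_\chi[n]$ shows that $\mathrm{Hyp}_\psi(\chi)$ is a mixed perverse sheaf of weights $\le n+N$.

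For part (ii) I use in addition that $\mathrm{FT}_\psi$ is an auto-equivalence of the category of perverse sheaves on $\mathbb A^N$, hence preserves simple objects; combined with the weight shift just recalled, it suffices to prove that $Rg_{!}\mathscr K_\chi[n]$ is a \emph{simple} perverse sheaf, \emph{pure of weight $n$}. Under the hypothesis that $\mathbf w_1,\ldots,\mathbf w_N$ generate $\mathbb Z^n$, the homomorphism $\mathbb Z^N\to\mathbb Z^n$, $e_j\mapsto\mathbf w_j$, is surjective, so $g$ is a locally closed immersion $\bar\jmath$ onto a subtorus $Z$ of $\mathbb G_m^N\subset\mathbb A^N$, and $Rg_{!}\mathscr K_\chi[n]=\bar\jmath_{!}(\mathscr K_\chi[n])$. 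Choose, by toric resolution, a smooth toric variety $Y\supset\mathbb T^n$ together with a proper morphism $h\colon Y\to\mathbb A^N$ extending $g$, an isomorphism over $Z$ and with normal crossing boundary $D=Y\setminus\mathbb T^n$; writing $j_Y\colon\mathbb T^n\hookrightarrow Y$, properness gives $\bar\jmath_{!}(\mathscr K_\chi[n])=Rh_\ast\bigl((j_Y)_{!}\mathscr K_\chi[n]\bigr)$. The crucial point is that \emph{the non-resonance condition on $\chi$ is exactly what forces the local monodromy of $\mathscr K_\chi$ along every torus orbit contained in $D$ to be non-trivial}; granting this, the elementary fact that $Rj_\ast$ of a rank-one Kummer sheaf with non-trivial monodromy around a smooth divisor restricts to zero on that divisor, propagated to all strata of $D$ through the product structure of toric neighbourhoods, gives that $R(j_Y)_\ast\mathscr K_\chi$ restricts to zero on $D$; consequently $(j_Y)_{!}\mathscr K_\chi[n]=R(j_Y)_\ast\mathscr K_\chi[n]=(j_Y)_{!\ast}\mathscr K_\chi[n]$, which is simple and pure of weight $n$ on $Y$. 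By the decomposition theorem $Rh_\ast$ of this object is a direct sum of shifts of simple perverse sheaves, each pure of weight $n$; but $\bar\jmath_{!}(\mathscr K_\chi[n])$ is perverse (by part (i)) and has vanishing stalks along $\overline Z\setminus Z$, where $\overline Z$ is the closure of $Z$, so the only summand that can occur is the one restricting to $\mathscr K_\chi[n]$ over $Z$, namely $\bar\jmath_{!\ast}(\mathscr K_\chi[n])$. Hence $Rg_{!}\mathscr K_\chi[n]$ is simple and pure of weight $n$, and therefore $\mathrm{Hyp}_\psi(\chi)=\mathrm{FT}_\psi\bigl(Rg_{!}\mathscr K_\chi[n]\bigr)$ is a simple perverse sheaf, pure of weight $n+N$.

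The step I expect to be the main obstacle is the one displayed in italics: translating the combinatorial non-resonance condition of Gelfand--Kapranov--Zelevinsky and of Adolphson into the geometric statement that $\mathscr K_\chi$ has non-trivial local monodromy along \emph{every} boundary stratum of a suitable smooth toric compactification $Y$. This requires an explicit description of the fan of $Y$ and of the characters that $\chi$ induces on the orbit closures, together with some care about whether a given face of $\Delta$ contains the origin (that is, whether the corresponding direction in the compactification is a $\mathbb G_m$-direction). Once this dictionary is in place, the vanishing of $R(j_Y)_\ast\mathscr K_\chi$ along $D$ and the decomposition-theorem argument above complete the proof.
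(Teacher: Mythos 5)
Your part (i) and your starting point for part (ii) coincide with the paper's: $\mathrm{Hyp}_\psi(\chi)\cong FT_\psi(\iota_!\mathscr K_\chi[n])$ with $\iota$ quasi-finite and affine, perversity from \cite[Corollaire 4.1.3]{BBD} and $t$-exactness of $FT_\psi$, the weight bound from Weil II plus Katz--Laumon, and the reduction of (ii) to showing that $\iota_!\mathscr K_\chi[n]$ equals the (simple, pure) intermediate extension $\iota_{!\ast}(\mathscr K_\chi[n])$, using that $\iota$ is an immersion when the $\mathbf w_j$ generate $\mathbb Z^n$. All of that is sound.

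The gap is the step you yourself italicize, and it is not merely hard to verify --- it is false as stated. The non-resonance condition constrains $\chi$ only on the subtori $\ker p_\tau$ attached to the \emph{faces} $\tau$ of $\delta$, equivalently to the faces of the dual cone $\check\delta$. A smooth toric model $Y$ proper over $\mathbb A^N$ is obtained by \emph{subdividing} $\check\delta$, and the subdivision in general introduces new rays lying in the relative interior of $\check\delta$ (or of its faces); the local monodromy of $\mathscr K_\chi$ along the corresponding exceptional divisors is $\chi$ composed with the associated primitive cocharacter, and non-resonance says nothing about these. Concretely, take $n=2$, $\mathbf w_1=(2,-1)$, $\mathbf w_2=(0,1)$, $\mathbf w_3=(1,0)$: then $\check\delta=\mathrm{Cone}((1,0),(1,2))$ is singular, its resolution requires the interior ray through $(1,1)$, and the monodromy along the new divisor is $\chi_1\chi_2$, while non-resonance only requires $\chi_1\neq 1$ and $\chi_1\chi_2^2\neq 1$; choosing $\chi_2=\chi_1^{-1}$ with $\chi_1$ nontrivial and $\chi_1^2\neq1$ gives a non-resonant $\chi$ with trivial monodromy along the exceptional divisor, so $R(j_Y)_\ast\mathscr K_\chi$ does \emph{not} vanish on $D$ and $(j_Y)_!\mathscr K_\chi\neq R(j_Y)_\ast\mathscr K_\chi$ on $Y$. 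Your subsequent decomposition-theorem argument therefore never gets off the ground. The repair is to not resolve: work on the \emph{normal but possibly singular} affine toric variety $X(\Sigma(\delta))=\mathrm{Spec}\,k[\mathbb Z^n\cap\delta]$, whose orbit stratification is indexed exactly by the faces of $\delta$. The stratum-by-stratum computation of $Rj_\ast\mathscr K_\chi$ (Lemma 1.2 of the paper, i.e.\ \cite[Lemma 1.3]{Fu}, valid for arbitrary toric varieties) shows that non-resonance is precisely the condition for $(Rj_\ast\mathscr K_\chi)|_{O_{\sigma_\tau}}=0$ on every boundary orbit, whence $j_!\mathscr K_\chi\cong Rj_\ast\mathscr K_\chi$ on $X(\Sigma(\delta))$; composing with the \emph{finite} morphism $X(\Sigma(\delta))\to\mathbb A'^N$ gives $\iota_!\mathscr K_\chi[n]\cong R\iota_\ast\mathscr K_\chi[n]\cong\iota_{!\ast}(\mathscr K_\chi[n])$ directly, with no appeal to the decomposition theorem. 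This is exactly how the paper concludes.
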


The non-resonance condition is defined as follows. Let $\delta$ be
the convex polyhedral cone in $\mathbb R^n$ generated by $\mathbf
w_1,\ldots, \mathbf w_N$. For any proper face $\tau$ of $\delta$,
let $\mathbb T_{\tau}$ be the torus $\mathrm{Spec}\, k[\mathbb
Z^n\cap \mathrm{span}\,\tau].$ Note that $\mathbb Z^n/\mathbb
Z^n\cap \mathrm{span}\,\tau$ is torsion free and hence free. So we
have
$$\mathbb Z^n\cong (\mathbb Z^n\cap \mathrm{span}\,\tau)\oplus (\mathbb Z^n/\mathbb Z^n\cap \mathrm{span}\,\tau).$$ The
inclusion of $\mathbb Z^n\cap \mathrm{span}\,\tau$ in $\mathbb Z^n$
induces a homomorphism of tori
$$p_\tau: \mathbb T^n=\mathrm{Spec}\,
k[\mathbb Z^n]\to \mathbb T_{\tau}=\mathrm{Spec}\, k[\mathbb Z^n\cap
\mathrm{span}\,\tau].$$ Its kernel is isomorphic to the torus
$\mathrm{Spec}\, k[\mathbb Z^n/\mathbb Z^n\cap
\mathrm{span}\,\tau].$ We say $\chi$ satisfies the
\emph{non-resonance condition} if for any proper face $\tau$ of
$\delta$, the restriction of $\mathscr K_\chi$ to
$\mathrm{ker}\,p_\tau$ is nontrivial. This is equivalent to saying
that $\mathscr K_\chi$ is not of the form $p_\tau^\ast \mathscr K$
for any Kummer sheaf $\mathscr K$ on $\mathbb T_\tau$. Since any
proper face of $\delta$ is contained in a codimension one face, to
check the non-resonance condition, it suffices to work with those
proper faces $\tau$ of $\delta$ of codimension one.

Theorem 0.3 (ii) was also obtained by T. Terasoma (\cite{T}) based
on a different notion of the nonresonance condition.

\medskip Let $f=\sum_{j=1}^N a_j t_1^{w_{1j}}\cdots t_n^{w_{nj}}$
be a Laurent polynomial such that $a_j\in\bar k$ are all nonzero,
and let $\Delta$ be the convex hull in $\mathbb R^n$ of the set
$\{0,\mathbf w_1,\ldots, \mathbf w_N\}$. We say $f$ is
\emph{nondegenerate with respect to $\Delta$} if for any face
$\Gamma$ of $\Delta$ not containing the origin, the subscheme of
$\mathbb T_{\bar k}^n$ defined by
$$\frac{\partial f_\Gamma}{\partial t_1}=\cdots =\frac{\partial
f_\Gamma}{\partial t_n}=0$$ is empty, where $f_\Gamma=\sum_{\mathbf
w_j\in\Gamma} a_j t_1^{w_{1j}}\cdots t_n^{w_{nj}}$.

Let $\Lambda$ be a subset of $\mathbb Z^n$, and let
$$P(t_1,\ldots, t_n)=\sum_{\mathbf w=(w_1,\ldots, w_n)\in\Lambda}
x_{\mathbf w} t_1^{w_1}\cdots t_n^{w_n}$$ considered as a Laurent
polynomial with variable coefficients $x_{\mathbf w}$ ($\mathbf
w\in\Lambda$). Let's recall the definition of the
$\Lambda$-discriminant $\Delta_\Lambda(x_{\mathbf w})$. (Confer
\cite{GKZ3} \S 9.1 Definition 1.2.) Let $\nabla_0\subset \mathbb
C^\Lambda$ be the set of those points $(x_{\mathbf w})_{\mathbf w\in
\Lambda}\in \mathbb C^\Lambda$ for which there exists
$t^{(0)}=(t_1^{(0)}, \ldots, t_n^{(0)})$ in the torus $(\mathbb
C^\ast)^n$ such that
$$P(t^{(0)})=\frac{\partial P}{\partial t_1}(t^{(0)})=\cdots=
\frac{\partial P}{\partial t_n}(t^{(0)})=0.$$ Let $\nabla_\Lambda$
be the Zariski closure of $\nabla_0$ in $\mathbb C^\Lambda$. It is
an irreducible variety defined over $\mathbb Q$, and it is conical,
that is, it is invariant under multiplication by scalars. If
$\nabla_\Lambda$ is a subvariety of $\mathbb C^\Lambda$ of
codimension $1$, we define the \emph{$\Lambda$-discriminant
$\Delta_\Lambda(x_{\mathbf w})$} to be an irreducible polynomial
with integer coefficients in the variables $x_{\mathbf w}$ ($\mathbf
w\in \Lambda$) which vanishes exactly on $\nabla_\Lambda$. Such a
polynomial is uniquely determined up to sign. If the codimension of
$\nabla_\Lambda$ is larger than $1$, we set
$\Delta_\Lambda(x_{\mathbf w})=1$.

Suppose there exists an affine hyperplane in $\mathbb Q^n$ not
containing the origin such that all points in $\Lambda$ lie in this
hyperplane. Then there exist integers $c_1,\ldots, c_n\in \mathbb Z$
and a nonzero integer $c$ such that
$$\sum_{i=1}^n c_iw_i=c$$ for all $\mathbf w=(w_1,\ldots, w_n)$ in
$\Lambda$. We have
$$P(t^{c_1} t_1, \ldots, t^{c_n} t_n)=t^c P(t_1,\ldots, t_n).$$
Applying $\frac{d}{dt}|_{t=1}$ to both sides of this equation, we
get
$$\sum_{i=1}^n c_i\frac{\partial P}{\partial t_i}(t_1,\ldots,
t_n)=cP(t_1,\ldots, t_n).$$ So the condition $$\frac{\partial
P}{\partial t_1}(t^{(0)})=\cdots= \frac{\partial P}{\partial
t_n}(t^{(0)})=0$$ implies the condition $$P(t^{(0)})=0.$$ In
particular, the hypersurface $\Delta_\Lambda=0$ is the closure of
the set consisting of points  $(x_{\mathbf w})_{\mathbf w\in
\Lambda}\in \mathbb C^\Lambda$ for which there exists
$t^{(0)}=(t_1^{(0)}, \ldots, t_n^{(0)})$ in the torus $(\mathbb
C^\ast)^n$ such that
$$\frac{\partial P}{\partial t_1}(t^{(0)})=\cdots=
\frac{\partial P}{\partial t_n}(t^{(0)})=0,$$ provided that this
closure has codimension $1$.

Consider the Laurent polynomial
$$F=\sum_{j=1}^N x_jt_1^{w_{1j}}\cdots t_n^{w_{nj}}$$ with variable
coefficients $x_j$. For any face $\Gamma$ of $\Delta$ not containing
the origin, let
$$F_\Gamma=\sum_{\mathbf w_j\in \tau} x_jt_1^{w_{1j}}\cdots t_n^{w_{nj}}.$$
Note that $\Gamma$ lies in an affine hyperplane in $\mathbb Q^n$ not
containing the origin. Let $V$ be the complement of the hypersurface
$$\Big(\prod_{0\not\in\Gamma\prec \Delta} \Delta_{\Gamma\cap\{\mathbf w_1,\ldots,
\mathbf w_N\}}\Big)(x_1,\ldots, x_N)=0$$ in $\mathbb T^N_{\bar k}$.
By the above discussion, for any $(a_1,\ldots, a_N)\in V(\bar k)$,
the Laurent polynomial $f=\sum_{j=1}^N a_j t_1^{w_{1j}}\cdots
t_n^{w_{nj}}$ is nondegenerate with respect to $\Delta$. If the
coefficients of the polynomial $\prod_{0\not\in\Gamma\prec \Delta}
\Delta_{\Gamma\cap\{\mathbf w_1,\ldots, \mathbf w_N\}}$ are not all
divisible by $p$, a condition which holds for sufficiently large
$p$, then $V$ is a Zariski dense open subset of $\mathbb A^N$.

\medskip
For any convex polyhedral cone $\delta$ in ${\mathbb R}^n$ with $0$
being a face, define the convex polytope ${\rm poly}(\delta)$ to be
the intersection of $\delta$ with a hyperplane in ${\mathbb R}^n$
which does not contain $0$ and intersects each one dimensional face
of $\delta$. Note that ${\rm poly}(\delta)$ is defined only up to
combinatorial equivalence. For any convex polytope $\Delta$ in
${\mathbb R}^n$ and any face $\Gamma$ of $\Delta$, define ${\rm
cone}_\Delta(\Gamma)$ to be the cone generated by $u'-u$
($u'\in\Delta$, $u\in \Gamma$), and define ${\rm
cone}_\Delta^\circ(\Gamma)$  to be the image of ${\rm
cone}_\Delta(\Gamma)$ in ${\mathbb R}^n/{\rm span}(\Gamma-\Gamma)$.
Note that $0$ is a face of ${\rm cone}_\Delta^\circ(\Gamma)$. We
define polynomials $\alpha(\delta)$ and $\beta(\Delta)$ in one
variable $T$ inductively by the following formulas:
\begin{eqnarray*}
\alpha(\{0\})&=&1,\\
\beta(\Delta)&=&(T^2-1)^{{\rm dim}(\Delta)}+\sum_{\Gamma
\prec\Delta,\;
\Gamma\not=\Delta}(T^2-1)^{{\rm dim}(\Gamma)}\alpha({\rm cone}_{\Delta}^\circ(\Gamma)),\\
\alpha(\delta)&=&{\rm trunc}_{\leq {\rm
dim}(\delta)-1}((1-T^2)\beta({\rm poly}(\delta))),
\end{eqnarray*}
where ${\rm trunc}_{\leq d}(\cdot)$ denotes taking the degree $\leq
d$ part of a polynomial. These polynomials are first introduced by
Stanley \cite{S}. Note that $\alpha(\delta)$ and $\beta(\Delta)$
only involve even powers of $T$, and they depend only on the
combinatorial types of $\delta$ and $\Delta$.

\medskip Let $\chi:\mathbb T_k^n(k)\to \overline {\mathbb Q}_l^\ast$ be a
character, let $\Delta$ be the convex hull of $\{0,\mathbf
w_1,\ldots,\mathbf w_N\}$, let $\delta$ be the convex polyhedral
cone generated by $\mathbf w_1,\ldots,\mathbf w_N$, and let $T$ be
the set of faces $\tau$ of $\delta$ so that $\tau\not=\delta$ and
$\mathscr K_\chi\cong p_\tau^\ast\mathscr K_{\chi_\tau}$ for a
Kummer sheaf $\mathscr K_{\chi_\tau}$ on the torus $\mathbb
T_\tau={\rm Spec}\, k[{\mathbb Z}^n\cap {\rm span}\,\tau]$, where
$$p_\tau:\mathbb T_k^n=\mathrm{Spec}\, k[{\mathbb Z}^n]\to \mathbb T_\tau=
\mathrm{Spec}\, k[\mathbb Z^n\cap \mathrm{span}\,\tau]$$ is the
morphism induced by the inclusion $k[\mathbb Z^n\cap
\mathrm{span}\,\tau] \hookrightarrow k[\mathbb Z^n].$ Define
$$e(\Delta,\chi)= (-1)^Nn!{\rm vol}(\Delta)+\sum_{\tau\in T}
(-1)^{n-{\rm dim}(\tau)+N}({\rm dim}(\tau))!{\rm vol}(\Delta\cap
\tau)\alpha({\rm cone}_{\delta}^\circ(\tau))(1)$$ and define a
polynomial $E(\Delta,\chi)$ inductively by
$$E(\Delta,\chi)=e(\Delta,\chi)T^{n+N}-\sum_{\tau\in T} (-1)^{n-{\rm
dim}(\tau)+N-N_\tau}T^{N-N_\tau}E(\Delta\cap\tau,\chi_\tau)\alpha({\rm
cone}_{\delta}^\circ(\tau)),$$ where $N_\tau$ is the number of those
$\mathbf w_j$ $(j=1,\ldots, N)$ lying in $\tau$,
$\mathrm{cone}_\delta(\tau)$ is the cone generated by $u'-u$
($u'\in\delta$, $u\in \tau$), and $\mathrm{cone}_\delta^\circ(\tau)$
is the image of $\mathrm{cone}_\delta(\tau)$ in ${\mathbb R}^n/{\rm
span}(\tau)$. Note that we have
$\mathrm{cone}_\delta(\tau)=\mathrm{cone}_{\Delta}(\Delta\cap\tau)$
and
$\mathrm{cone}_\delta^\circ(\tau)=\mathrm{cone}_{\Delta}^\circ(\Delta\cap
\tau)$.

\begin{theorem} Let
$\Delta$ be the convex hull of $\{0,\mathbf{w}_1,\ldots,
\mathbf{w}_N\}$. Suppose there exists a Zariski dense open subset
$V$ of $\mathbb T^N$ such that for any $(a_1,\ldots, a_N)\in V(\bar
k)$, the Laurent polynomial $f=\sum_{j=1}^N a_j t_1^{w_{1j}}\cdots
t_n^{w_{nj}}$ is nondegenerate with respect to $\Delta$.

(i) The rank of $\mathrm{Hyp}_\psi(\chi)$ is
$(-1)^Nn!\mathrm{vol}(\Delta)$. Here for any object $K$ in
$D_c^b(\mathbb A^N, \overline{\mathbb Q}_\ell)$, the rank of $K$ is
defined to be $\sum_{i}(-1)^i \mathrm{dim}\,\mathscr H^i(K)_{\bar
\eta}$, where $\eta$ is the generic point of $\mathbb A^N$.

(ii) Let
$$P(\mathrm{Hyp}_\psi(\chi))=\sum_{w\in\mathbb Z} e_w T^w,$$ where
$e_w$ is the rank of the weight $w$ sub-quotient of the weight
filtration for the mixed perverse sheaf $\mathrm{Hyp}_\psi(\chi)$.
(Confer \cite[5.3.5]{BBD} for the definition of the weight
filtration of a mixed perverse sheaf). Then we have
\begin{eqnarray*}
P(\mathrm{Hyp}_\psi(\chi))&=&E(\Delta,\chi),\\
e_{n+N}&=&e(\Delta,\chi).
\end{eqnarray*}

(iii) For each codimension 1 face $\Gamma$ of $\Delta$ not
containing the origin, choose relatively prime integers $d_1,\ldots,
d_n$ such that the restriction to $\Delta$ of linear function
$$\phi(v_1,\ldots, v_n)=d_1v_1+\ldots+d_nv_n$$ takes its minimum
$d_\Gamma$ exactly on $\Gamma$. Suppose $p$ does not divide
$d_\Gamma$ for each codimension one face $\Gamma$ of $\Delta$ not
containing the origin. Then for each $i$, $\mathscr
H^i(\mathrm{Hyp}_\psi(\chi))|_V$ is lisse.
\end{theorem}

The main tools to prove our results are the Deligne-Fourier
transformation and toric geometry. Let $\mathbb A'^N$ be the dual
affine space of $\mathbb A^N$. Recall that the Deligne-Fourier
transformation
$$FT_{\psi}: D_c^b(\mathbb A'^N,\overline{\mathbb Q}_\ell)\to D_c^b(\mathbb
A^N,\overline {\mathbb Q}_\ell)$$ associated to the additive
character $\psi$ is the functor defined by
$$FT_{\psi}(K)=Rp_{2!}(p_1^\ast K\otimes \langle\,,\,\rangle^\ast \mathscr
L_{\psi})[N]$$ for any $K\in \mathrm{ob}\,D_c^b(\mathbb
A'^N,\overline{\mathbb Q}_\ell)$, where
$$p_1: \mathbb A'^N\times\mathbb A^N\to\mathbb A'^N,\quad
p_2:\mathbb A'^N\times\mathbb A^N\to\mathbb A^N$$ are the
projections, and $\langle\,,\,\rangle$ is the morphism
$$\langle\,,\,\rangle:\mathbb A'^N\times\mathbb A^N\to\mathbb
A^1,\quad ((\xi_1,\ldots, \xi_N), (x_1,\ldots, x_N))\mapsto
\sum_{j=1}^N x_j\xi_j.$$ Confer \cite{L} for properties of the
Deligne-Fourier transformation. Consider the morphism
$$\iota:\mathbb T^n\to\mathbb A'^N, \quad (t_1,\ldots, t_n)\mapsto
(t_1^{w_{11}}\cdots t_n^{w_{n1}}, \ldots, t_1^{w_{1N}}\cdots
t_n^{w_{nN}}).$$

\begin{theorem}
The morphism $\iota$ is quasi-finite and affine, and we have

(i) $\mathrm{Hyp}_\psi(\chi)\cong FT_{\psi}(\iota_!\mathscr
K_{\chi}[n]).$

(ii) The weight $n+N$ subquotient of the weight filtration for the
mixed perverse sheaf $\mathrm{Hyp}_\psi(\chi)$ is isomorphic to
$FT_{\psi}(\iota_{!\ast}\mathscr K_{\chi}[n])$, and its rank is
equal to $e(\Delta,\chi)$.
\end{theorem}

Let's deduce some corollaries from the above theorems.

\begin{corollary} If $0$ lies in the interior of $\Delta$, or equivalently, if
$\delta$ coincides with $\mathbb R^n$, then
$\mathrm{Hyp}_\psi(\chi)$ is pure of weight $n+N$.
\end{corollary}

\begin{proof}
We can factorize $\iota$ as the composite
$$\mathbb T^n=\mathrm{Spec}\,k[\mathbb Z^n]\stackrel{j}\hookrightarrow  \mathrm{Spec}\,k[\mathbb
Z^n\cap \delta]\stackrel {g}\to \mathbb A^N,$$ where $j$ is the open
immersion of the open dense torus in the affine toric scheme
$\mathrm{Spec}\,k[\mathbb Z^n\cap \delta]$ and it is induced by the
inclusion $k[\mathbb Z^n\cap \delta]\hookrightarrow k[\mathbb Z^n]$,
and $g$ is the morphism induced by the $k$-algebra homomorphism
$$k[\xi_1,\ldots,\xi_N]\to k[\mathbb Z^n\cap\delta], \quad \xi_j\mapsto
\mathbf w_j.$$ One can verify that $g$ is a finite morphism. In the
case where $\delta=\mathbb R^n$, $j$ is an isomorphism. It follows
that $\iota$ is finite, and $\iota_!=\iota_{!\ast}$. So
$\mathrm{Hyp}_\psi(\chi)\cong FT_{\psi}(\iota_{!\ast}\mathscr
K_{\chi}[n])$ is pure of weight $n+N$.
\end{proof}

\begin{corollary} Suppose the condition of Theorem 0.4 (iii) holds.

(i) For $i\not=-N$, we have $\mathscr
H^i(\mathrm{Hyp}_\psi(\chi))|_{V}=0$ and $\mathscr
H^i(FT_{\psi}(\iota_{!\ast}\mathscr K_{\chi}[n]))|_{V}=0$, and
$\mathscr H^{-N}(\mathrm{Hyp}_\psi(\chi))|_{V}$ and $\mathscr
H^{-N}(FT_{\psi}(\iota_{!\ast}\mathscr K_{\chi}[n]))|_{V}$ are lisse
sheaves on $V$ of ranks $n!\mathrm{vol}(\Delta)$ and
$(-1)^Ne(\Delta,\chi)$, respectively.

(ii) If $0$ lies in the interior of $\Delta$, then $\mathscr
H^{-N}(\mathrm{Hyp}_\psi(\chi))|_{V}$ is lisse pure of weight $n$
and of rank $n!\mathrm{vol}(\Delta)$.
\end{corollary}

\begin{proof} (i) follows from Theorems 0.3 (i), 0.4 (iii) and 0.5 (ii).
(ii) follows from Corollary 0.6.
\end{proof}

Specializing to a point $a=(a_1,\ldots, a_n)\in V(k)$, the above
results imply \cite[Theorems 1.3, 1.8]{DL} and \cite[Theorem
0.2]{Fu} under the extra assumption that $p$ is sufficiently large.

\medskip Corresponding to Hotta's result about the regularity of
the GKZ system under the nonconfluence condition, we have the
following theorem which shows that $\mathrm{Hyp}_\psi(\chi)$ is tame
under the nonconfluence condition if $p$ is sufficiently large.

\begin{theorem} Suppose the matrix $A$ satisfies the nonconfluence
condition, that is, there exist integers $c_1,\ldots, c_n\in\mathbb
Z$ such that $\sum_{i=1}^nc_iw_{ij}=1$ $(j=1,\ldots, N)$. Let
$\chi'=\chi_1^{c_1}\cdots \chi_n^{c_n}$.

(i) Suppose $\chi'$ is nontrivial. Let $G(\chi',\psi)$ be the rank
one lisse $\overline{\mathbb Q}_\ell$-sheaf on $\mathrm{Spec}\,k$ so
that the geometric Frobenius acts by multiplication of the Gauss sum
$-g(\chi',\psi)=-\sum_{t\in k^\ast}\chi'(t)\psi(t)$, and we denote
the inverse image of $G(\chi',\psi)$ on any $k$-scheme also by
$G(\chi',\psi)$. Then there exists an object $\mathscr H$ in the
derived category $D_c^b(\mathbb A^N_{\mathbb
Z[1/\ell]},\overline{\mathbb Q}_\ell)$ of $\overline {\mathbb
Q}_\ell$-sheaves on the affine space $\mathbb A^N_{\mathbb
Z[1/\ell]}$ over $\mathbb Z[1/\ell]$ such that
$\mathrm{Hyp}_\psi(\chi)$ is a direct factor of $(\mathscr
H|_{\mathbb A^N_{k}})\otimes G(\chi',\psi)$. In particular, over the
algebraic closure $\bar k$ of $k$,
$\mathrm{Hyp}_\psi(\chi)|_{\mathbb A^N_{\bar k}}$ is a direct factor
of $\mathscr H|_{\mathbb A^N_{\bar k}}$.

(ii) If $\chi'$ is trivial, then there exists an object $\mathscr H$
in $D_c^b(\mathbb A^N_{\mathbb Z[1/\ell]},\overline{\mathbb
Q}_\ell)$ such that $\mathrm{Hyp}_\psi(\chi)$ is a direct factor of
$\mathscr H|_{\mathbb A^N_{k}}$.

In both cases, $\mathscr H$ is independent of the choice of the
additive character $\psi$.
\end{theorem}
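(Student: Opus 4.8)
The plan is to use the nonconfluence condition to peel off one torus coordinate, reducing $\mathrm{Hyp}_\psi(\chi)$ to the product of a Gauss sum and an object which no longer involves the Artin--Schreier sheaf $\mathscr L_\psi$ --- only Kummer sheaves and a monomial morphism defined over $\mathbb Z$ --- and then to descend that object to $\mathbb Z[1/\ell]$. Concretely, the vector $\mathbf c=(c_1,\ldots,c_n)$ with $\sum_i c_iw_{ij}=1$ for all $j$ is primitive, so the cocharacter $\lambda_{\mathbf c}\colon\mathbb G_m\to\mathbb T^n$, $s\mapsto(s^{c_1},\ldots,s^{c_n})$, is a direct factor: there is an isomorphism $\mathbb T^n\cong\mathbb G_m\times\mathbb T'$ of tori over $\mathbb Z$ with $\mathbb T'$ of dimension $n-1$, under which $\mathbb G_m$ maps onto $\lambda_{\mathbf c}(\mathbb G_m)$. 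Since $\lambda_{\mathbf c}(s)^{\mathbf w_j}=s^{\sum_i c_iw_{ij}}=s$, in the coordinates $(s,t')$ we get $F(s,t',x)=s\cdot G(t',x)$ where $G\colon\mathbb T'\times\mathbb A^N\to\mathbb A^1$, $G(t',x)=\sum_j x_j t'^{\bar{\mathbf w}_j}$, is again a monomial morphism with integer exponents, defined over $\mathbb Z$; and the Kummer sheaf splits as $\mathscr K_\chi\cong\mathscr K_{\chi'}\boxtimes\mathscr K_{\chi''}$, where $\chi'=\chi\circ\lambda_{\mathbf c}=\chi_1^{c_1}\cdots\chi_n^{c_n}$ and $\chi''$ is a character of $\mathbb T'(k)$.

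The second step is to integrate out $s$. Write $\pi_2=q\circ p$ where $p$ forgets the $\mathbb G_m$-coordinate and $q\colon\mathbb T'\times\mathbb A^N\to\mathbb A^N$, $\pi_1'\colon\mathbb T'\times\mathbb A^N\to\mathbb T'$ are the projections. By the projection formula and proper base change along $(\mathrm{id},G)\colon\mathbb G_m\times\mathbb T'\times\mathbb A^N\to\mathbb G_m\times\mathbb A^1$, computing $Rp_!$ reduces to the Deligne--Fourier transform $\mathrm{FT}_\psi$ on $\mathbb A^1$: one obtains $Rp_!\bigl(\mathscr K_{\chi'}(s)\boxtimes\mathscr K_{\chi''}(t')\boxtimes\overline{\mathbb Q}_\ell\otimes(sG)^\ast\mathscr L_\psi\bigr)\cong(\mathscr K_{\chi''}\boxtimes\overline{\mathbb Q}_\ell)\otimes G^\ast\mathcal G$, where $\mathcal G[1]=\mathrm{FT}_\psi(j_!\mathscr K_{\chi'})$ with $j\colon\mathbb G_m\hookrightarrow\mathbb A^1$. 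The Fourier transform of a Kummer sheaf is well known: for $\chi'$ nontrivial $\mathrm{FT}_\psi(j_!\mathscr K_{\chi'})\cong j_!\mathscr K_{\chi'^{-1}}\otimes G(\chi',\psi)$, so $\mathcal G\cong j_!\mathscr K_{\chi'^{-1}}\otimes G(\chi',\psi)[-1]$; for $\chi'$ trivial, $\mathrm{FT}_\psi(j_!\overline{\mathbb Q}_\ell)$ sits in the canonical triangle built from $\mathrm{FT}_\psi(\overline{\mathbb Q}_{\ell,\mathbb A^1})\cong i_{0\ast}\overline{\mathbb Q}_\ell(-1)[-1]$ and $\mathrm{FT}_\psi(i_{0\ast}\overline{\mathbb Q}_\ell)\cong\overline{\mathbb Q}_{\ell,\mathbb A^1}[1]$, hence is canonically (in particular $\psi$-independently) defined and is an iterated extension of the constant sheaf and a Tate-twisted skyscraper at the origin. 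In both cases we get, up to a shift and (when $\chi'\neq1$) the one-dimensional factor $G(\chi',\psi)$,
$$\mathrm{Hyp}_\psi(\chi)\;\cong\;Rq_!\bigl((\pi_1'^\ast\mathscr K_{\chi''})\otimes G^\ast\mathcal G_0\bigr)[n+N-1]\otimes G(\chi',\psi),$$
with $\mathcal G_0=j_!\mathscr K_{\chi'^{-1}}$ if $\chi'\neq1$ and $\mathcal G_0=\mathrm{FT}_\psi(j_!\overline{\mathbb Q}_\ell)[-1]$ if $\chi'=1$. The key point is that the right-hand side, apart from the factor $G(\chi',\psi)$, involves only Kummer sheaves (for characters whose orders divide $q-1$, hence are prime to $p$) and morphisms defined over $\mathbb Z$; in particular it no longer involves $\psi$.

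The third step is the descent. Let $d$ be a common multiple of the orders of $\chi'$ and of the components of $\chi''$ (so $d\mid q-1$, hence $p\nmid d$), and let $[d]\colon\mathbb G_{m,\mathbb Z}\to\mathbb G_{m,\mathbb Z}$, $[d]\colon\mathbb A^1_{\mathbb Z}\to\mathbb A^1_{\mathbb Z}$ be the $d$-th power maps, $[d]^{(n-1)}\colon\mathbb T'\to\mathbb T'$ their product in the $n-1$ coordinates (after fixing $\mathbb T'\cong\mathbb G_m^{n-1}$ over $\mathbb Z$). Define
$$\mathscr H\;=\;Rq_!\Bigl(\bigl(\pi_1'^\ast([d]^{(n-1)}_\ast\overline{\mathbb Q}_\ell)\bigr)\otimes G^\ast\mathcal G_0'\Bigr)[n+N-1]\in D^b_c(\mathbb A^N_{\mathbb Z[1/\ell]},\overline{\mathbb Q}_\ell),$$
where $\mathcal G_0'=[d]_\ast\overline{\mathbb Q}_\ell$ on $\mathbb A^1_{\mathbb Z[1/\ell]}$ if $\chi'\neq1$ and $\mathcal G_0'=\mathrm{FT}(j_!\overline{\mathbb Q}_\ell)[-1]$ (manifestly defined over $\mathbb Z[1/\ell]$, and $\psi$-independent) if $\chi'=1$; all morphisms involved are defined over $\mathbb Z$, so $\mathscr H$ makes sense over $\mathbb Z[1/\ell]$ and involves no $\psi$. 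Since $\mu_d\subset k^\ast$ and the coefficients have characteristic zero, over $k$ one has $[d]_\ast\overline{\mathbb Q}_\ell=\bigoplus_\eta\mathscr K_\eta$ on $\mathbb G_m$, and $[d]_\ast\overline{\mathbb Q}_\ell|_{\mathbb A^1}=\overline{\mathbb Q}_{\ell,\mathbb A^1}\oplus\bigoplus_{\eta\neq1}j_!\mathscr K_\eta$; base change commutes with $Rq_!$ and with $[d]_\ast$, so $\mathscr H|_{\mathbb A^N_k}$ splits as a direct sum indexed by pairs of characters, one summand being exactly $Rq_!\bigl((\pi_1'^\ast\mathscr K_{\chi''})\otimes G^\ast\mathcal G_0\bigr)[n+N-1]=\mathrm{Hyp}_\psi(\chi)\otimes G(\chi',\psi)^{-1}$ (resp.\ $=\mathrm{Hyp}_\psi(\chi)$ when $\chi'=1$). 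Hence $\mathrm{Hyp}_\psi(\chi)$ is a direct factor of $(\mathscr H|_{\mathbb A^N_k})\otimes G(\chi',\psi)$ (resp.\ of $\mathscr H|_{\mathbb A^N_k}$), and over $\bar k$ the geometrically constant factor $G(\chi',\psi)$ becomes trivial, which gives the final assertion of (i).

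The main obstacle is the descent step itself: producing a model over $\mathbb Z[1/\ell]$ rather than over $\mathbb Z[\zeta_d,1/\ell d]$ while keeping the direct-factor statement. This is exactly why one replaces each Kummer sheaf by the full push-forward $[d]_\ast\overline{\mathbb Q}_\ell$ along a $\mathbb Z$-morphism --- an object that is constructible over $\mathbb Z[1/\ell]$ but decomposes into Kummer isotypic pieces only after base change to a field containing $\mu_d$ --- and one must check that this decomposition, hence the direct-factor property, is compatible with the base change $\mathrm{Spec}\,k\to\mathrm{Spec}\,\mathbb Z[1/\ell]$. Secondary technical points, to be done carefully but not difficult, are: the exact bookkeeping of shifts, Tate twists, and of the normalization of $G(\chi',\psi)$ in $\mathrm{FT}_\psi(j_!\mathscr K_{\chi'})$; the verification that $\mathscr K_\chi$ splits as $\mathscr K_{\chi'}\boxtimes\mathscr K_{\chi''}$ compatibly with the $\mathbb Z$-splitting $\mathbb T^n\cong\mathbb G_m\times\mathbb T'$; and, when $\chi'$ is trivial, that $\mathrm{FT}_\psi(j_!\overline{\mathbb Q}_\ell)$ is canonically (hence $\psi$-independently) defined, using purity $i_0^!\overline{\mathbb Q}_{\ell,\mathbb A^1}\cong\overline{\mathbb Q}_\ell(-1)[-2]$.
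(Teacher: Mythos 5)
Your proposal is correct and follows essentially the same route as the paper: split off the primitive cocharacter $(c_1,\ldots,c_n)$ (the paper does this by extending the row vector to a basis of $\mathbb Z^n$ and changing torus coordinates), factor $F$ as $s\cdot G$ with $G$ defined over $\mathbb Z$, integrate out $s$ via the one-variable Deligne--Fourier transform of $\kappa_!\mathscr K_{\chi'}$ to extract the Gauss sum, and descend by replacing Kummer sheaves with $[d]_\ast\overline{\mathbb Q}_\ell$ along $\mathbb Z$-morphisms. The one point you defer --- that for $\chi'$ trivial the object $FT_\psi(\kappa_!\overline{\mathbb Q}_\ell[1])$ is determined $\psi$-independently over $\mathbb Z[1/\ell]$ --- is exactly the content of the paper's Lemma 3.2, proved there by showing the relevant $\mathrm{Hom}$ space is one-dimensional and the extension non-split, so that the mapping cone of any nonzero morphism $0_\ast\overline{\mathbb Q}_\ell(-1)[-1]\to\overline{\mathbb Q}_\ell[1]$ is well defined up to isomorphism.
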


Finally we consider the case where ${\mathbf w}_1,\ldots, {\mathbf
w}_N$ all lie in the first quadrant of $\mathbb R^n$, that is
$w_{ij}\geq 0$ for all $i\in\{1,\ldots, n\}, j\in\{1,\ldots, N\}$.
In this case, it is natural to consider the family of exponential
sums
$$\mathrm{Hyp}^{0}_\psi(x_1,\ldots, x_N)=\sum_{t_1,\ldots, t_n\in
k}\psi\Big(\sum_{j=1}^N x_jt_1^{w_{1j}}\cdots t_n^{w_{nj}}\Big).$$
Here unlike the hypergeometric function over the finite field, the
summation is taken over all rational points of the whole affine
space $\mathbb A^n$, not just the torus $\mathbb T^n$. Let
$\pi_2:\mathbb A^n\times\mathbb A^N\to \mathbb A^N$ be the
projection, and let $G:\mathbb A^n\times\mathbb A^N\to\mathbb A^1$
be the morphism $G(t_1,\ldots, t_n,x_1,\ldots, x_N)=
\sum_{j=1}^Nx_jw_1^{w_{1j}}\cdots t_n^{w_{nj}}.$ Define an object in
$D_c^N(\mathbb A^N,\overline{\mathbb Q}_\ell)$ by
$$\mathrm{Hyp}_\psi^0=R\pi_{2!} G^\ast L_\psi[n+N].$$ By the
Grothendieck trace formula, for any $x=(x_1,\ldots, x_N)\in \mathbb
A^N(k)$, we have
$$\mathrm{Hyp}_\psi^0(x_1,\ldots, x_N)=
(-1)^{n+N}\mathrm{Tr}\Big(\mathrm
{Frob}_x,\big(\mathrm{Hyp}_{\psi}^0\big)_{\bar x}\Big),$$ where
$\mathrm{Frob}_x$ is the geometric Frobenius at $x$.

\begin{corollary}
Suppose $\delta$ coincides with the positive quadrant
$\{(w_i)\in\mathbb R^n|w_i\geq 0\}$ of $\mathbb R^n$. Then
$\mathrm{Hyp}_\psi^0$ is isomorphic to the weight $n+N$ subquotient
of the weight filtration for the mixed perverse sheaf
$\mathrm{Hyp}_\psi(\chi)$ with $\chi=1$.
\end{corollary}

\begin{proof} Keep the notation in the proof of Corollary 0.6.
In the case where $\delta$ is the positive quadrant of $\mathbb R^n$
and $\chi=1$, $j$ coincides with the canonical open immersion
$\mathbb T^n\hookrightarrow \mathbb A^n$, $g$ is the morphism
$$g:\mathbb A^n\to\mathbb A'^N,\quad (t_1,\ldots, t_n)\mapsto (t_1^{w_{11}}\cdots t_n^{w_{1n}},\ldots,
t_1^{w_{1N}}\cdots t_n^{w_{1N}}),$$ and
$$\iota_{!\ast}\mathcal K_{\chi}[n]\cong g_! j_{!\ast}\overline
{\mathbb Q}_\ell[n]\cong g_!  \overline {\mathbb Q}_\ell[n].$$
Similarly to Theorem 0.5 (i), one can prove
$\mathrm{Hyp}_{\psi}^0\cong FT(g_!\overline{\mathbb Q}_\ell)[n]$.
(Confer the proof of Lemma 1.1.) Our assertion then follows from
Theorem 0.5 (ii).
\end{proof}

\begin{corollary} Suppose the condition of Theorem 0.4 (iii) holds,
and suppose $\delta$ coincides with the positive quadrant of
$\mathbb R^n$. Then $\mathscr H^i(\mathrm{Hyp}_\psi^0)|_{V}=0$ for
$i\not=N$, and $\mathscr H^{-N}(\mathrm{Hyp}_\psi^0)|_{V}$ is a
lisse sheaf pure of weight $n$ and of rank
$\sum_{0\in\tau\prec\Delta}(-1)^{n-\mathrm{dim}\,\tau}(\mathrm{dim}\,\tau)!\mathrm{vol}(\tau)$.
\end{corollary}

In \cite[8.1]{GL} and \cite[3.1]{Lo}, Gabber and Loeser introduce
hypergeometric sheaves on tori. Let $\pi_i:\mathbb T^1\to \mathbb
T^N$ $(i=1,\ldots, n)$ be homomorphisms of tori defined by
$$\pi_i(t)=(t^{w_{i1}},\ldots, t^{w_{iN}})$$ for some integers $w_{ij}$,
let $\chi_i$ $(i=1,\ldots, n)$ be multiplicative characters of $k$,
and let $\lambda=(\lambda_1,\ldots, \lambda_N)$ be a $k$-point of
$\mathbb T^N$. For any $K,L\in D_c^b(\mathbb T^N, \overline{\mathbb
Q}_\ell)$, the convolution product $K\ast_! L$ of $K$ and $L$ is
defined to be
$$K\ast_! L=Rm_!(K\boxtimes L),$$ where $m:\mathbb T^N\times \mathbb T^N\to\mathbb T^N$
is the multiplication on $\mathbb T^N$. In loc. cit. the
!-hypergeometric sheaf associated to the above data is defined to be
$$\mathrm{Hyp}(!,\lambda, (\pi_i),(\chi_i))=\delta_\lambda \ast_! R\pi_{1!} (\mathcal K_{\chi_1}\otimes
\mathcal L_\psi) \ast_!\cdots\ast_! R\pi_{n!}(\mathcal
K_{\chi_n}\otimes \mathcal L_\psi).$$ For any $k$-points
$x=(x_1,\ldots, x_N)$ of $\mathbb T^N$, by Grothendieck's trace
formula, we have
\begin{eqnarray*}
&&\mathrm{Tr}\Big(\mathrm{Frob}_x, (\mathrm{Hyp}(!,\lambda, (\pi_i),(\chi_i)))_{\bar x}\Big)\\
&=& \sum_{t_1,\ldots, t_n\in k^\ast, \lambda_1t_1^{w_{11}}\cdots
t_n^{w_{n1}}=x_1,\ldots, \lambda_Nt_1^{w_{1N}}\cdots
t_n^{w_{nN}}=x_N} \chi_1(t_1)\cdots \chi_n(t_n) \psi(t_1+\cdots
+t_n).
\end{eqnarray*} I don't know the exact relationship
between the hypergeometric sheaves of Gabber and Loeser and the GKZ
hypergeometric sheaves. In some very special cases, I can express
these two types of hypergeometric sheaves in terms of each other.

\medskip
The paper is organized as follows. In \S1, we first prove Theorem
0.5 (i) and deduce Theorem 0.3 from it. Based on the work of
Denef-Loeser \cite{DL} and systemically using the Deligne-Fourier
transformation, we prove Theorems 0.4 (i)-(ii) and 0.5 (ii). In \S
2, we study the liss locus of $\mathrm{Hyp}_\psi(\chi)$ and prove
Theorem 0.4 (iii). Our method can be considered as a generalization
of Deligne's method of estimating exponential sums
(\cite[3.7.2-3]{D}). In \S3, we prove Theorem 0.8.

\section{Deligne-Fourier Transformation and the GKZ Hypergeometric Sheaf}

Recall that $\mathbb A'^N$ is the dual affine space of $\mathbb A^N$
and $\iota$ is the morphism
$$\iota:\mathbb T^n\to\mathbb A'^N, \quad (t_1,\ldots, t_n)\mapsto
(t_1^{w_{11}}\cdots t_n^{w_{n1}}, \ldots, t_1^{w_{1N}}\cdots
t_n^{w_{nN}}).$$ By abuse of notation, we denote the morphism
$\mathbb T^n\to\mathbb T^N$ induced by $\iota$ also by $\iota$. The
following is Theormem 0.5 (i).

\begin{lemma} The morphism $\iota$ is quasi-finite and affine, and we have
$$FT_{\psi}(\iota_!\mathscr K_{\chi}[n])\cong
\mathrm{Hyp}_\psi(\chi).$$
\end{lemma}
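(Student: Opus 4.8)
The plan is to unwind both sides of the asserted isomorphism as iterated derived pushforwards and reduce to a base-change identity. First, the quasi-finiteness of $\iota$ is immediate: the fibers of $\mathbb T^n \to \mathbb A'^N$ are the fibers of the torus action map composed with evaluation at the point $(1,\ldots,1)$, and since $A$ has rank $n$, the monomials $t_1^{w_{1j}}\cdots t_n^{w_{nj}}$ cut out a finite (in fact, a subtorus-coset) locus; more precisely, $\iota$ factors through the closed immersion $\mathbb T^n/\ker(\text{action}) \hookrightarrow \mathbb A'^N$, hence is quasi-finite, and it is affine because $\mathbb T^n$ is affine and $\mathbb A'^N$ is separated (any morphism from an affine scheme to a separated scheme is affine). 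So the functor $\iota_!$ in the statement is just $\iota_*$ for the open part and extension by zero, and $\iota_! \mathscr K_{\chi_n}[n]$ is a well-defined object of $D_c^b(\mathbb A'^N,\overline{\mathbb Q}_\ell)$.

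For the main identity, I would compute $FT_\psi(\iota_!\mathscr K_{\chi_n}[n])$ directly from the definition. We have
$$FT_\psi(\iota_!\mathscr K_{\chi_n}[n]) = Rp_{2!}\Big(p_1^\ast \iota_!\mathscr K_{\chi_n}[n] \otimes \langle\,,\,\rangle^\ast\mathscr L_\psi\Big)[N],$$
where $p_1,p_2$ are the projections from $\mathbb A'^N\times\mathbb A^N$. Form the fiber product of $\iota:\mathbb T^n\to\mathbb A'^N$ with $p_1$: this is just $\mathbb T^n\times\mathbb A^N$, with the map $(\iota\times\mathrm{id}): \mathbb T^n\times\mathbb A^N \to \mathbb A'^N\times\mathbb A^N$ over $p_1$. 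By proper base change (valid since $\iota$ is quasi-finite, hence $\iota_!$ commutes with base change, or simply by the projection-to-a-point formula applied fiberwise), $p_1^\ast\iota_!\mathscr K_{\chi_n} \cong (\iota\times\mathrm{id})_!\,(\text{pr}_1^\ast\mathscr K_{\chi_n})$ where $\text{pr}_1:\mathbb T^n\times\mathbb A^N\to\mathbb T^n$. Then by the projection formula and transitivity of $R(-)_!$,
$$FT_\psi(\iota_!\mathscr K_{\chi_n}[n]) \cong R\pi_{2!}\Big(\text{pr}_1^\ast\mathscr K_{\chi_n}\otimes (\iota\times\mathrm{id})^\ast\langle\,,\,\rangle^\ast\mathscr L_\psi\Big)[n+N],$$
where $\pi_2 = p_2\circ(\iota\times\mathrm{id}): \mathbb T^n\times\mathbb A^N\to\mathbb A^N$ agrees with the $\pi_2$ of the definition of $\mathrm{Hyp}_\psi(\chi)$. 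The final point is that the composite $\langle\,,\,\rangle\circ(\iota\times\mathrm{id})$ sends $(t_1,\ldots,t_n,x_1,\ldots,x_N)$ to $\sum_{j=1}^N x_j\, t_1^{w_{1j}}\cdots t_n^{w_{nj}}$, which is exactly the morphism $F$; hence $(\iota\times\mathrm{id})^\ast\langle\,,\,\rangle^\ast\mathscr L_\psi \cong F^\ast\mathscr L_\psi$, and identifying $\text{pr}_1$ with $\pi_1$ we recover $\mathrm{Hyp}_\psi(\chi) = R\pi_{2!}(\pi_1^\ast\mathscr K_\chi\otimes F^\ast\mathscr L_\psi)[n+N]$ verbatim.

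The only genuinely delicate step is the base-change identity $p_1^\ast\iota_! \cong (\iota\times\mathrm{id})_!\,\text{pr}_1^\ast$: this requires knowing that $\iota_!$ commutes with the (smooth, hence flat) base change $p_1$, which holds by the smooth base change theorem — or, since $\iota$ is quasi-finite and separated, one may instead invoke that $R\iota_!$ (extension by zero along a quasi-finite map, after compactifying and using proper base change for the proper part) commutes with arbitrary base change. I expect this to be routine given the standard six-functor formalism, so the real content of the lemma is the bookkeeping that identifies the pairing $\langle\,,\,\rangle$ pulled back along $\iota\times\mathrm{id}$ with the monomial form $F$. I would also remark that the degree shifts match because $\dim\mathbb T^n = n$ is absorbed into the $[n]$ on $\mathscr K_{\chi_n}$ and the $[N]$ is the Fourier shift, together giving the $[n+N]$ normalization.
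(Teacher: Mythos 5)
Your proof is correct and follows essentially the same route as the paper's: the main identity $FT_\psi(\iota_!\mathscr K_\chi[n])\cong \mathrm{Hyp}_\psi(\chi)$ is obtained by exactly the same computation — proper base change over the Cartesian square on $p_1$, the projection formula, and the identifications $\langle\,,\,\rangle\circ(\iota\times\mathrm{id})=F$ and $p_2\circ(\iota\times\mathrm{id})=\pi_2$. The only cosmetic difference is the quasi-finiteness argument (the paper exhibits an explicit $\iota':\mathbb T^N\to\mathbb T^n$ with $\iota'\iota=[d]$, whereas you invoke finiteness of the kernel of the torus homomorphism, which is equivalent to $A$ having rank $n$); just note that the image of $\mathbb T^n/\ker$ in $\mathbb A'^N$ is locally closed rather than closed, which does not affect the conclusion.
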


\begin{proof} Since $\mathbb T^n$ and $\mathbb A'^N$ are affine schemes,
$\iota$ is an affine morphism. Recall that the matrix $A=(w_{ij})$
has rank $n$. So over $\mathbb Q$, the vectors $\mathbf w_1,\ldots,
\mathbf w_N$ generate $\mathbb Q^n$. This implies that there exists
an $(N\times n)$-matrix $B=(v_{ij})$ with integer entries such that
$AB=dI_n$ for some nonzero integer $d$, where $I_n$ is the identity
matrix. Consider the morphism
$$\iota': \mathbb T^N\to \mathbb T^n,\quad (\xi_1,\ldots, \xi_N)\mapsto
(\xi_1^{v_{11}}\cdots \xi_N^{v_{N1}},\ldots, \xi_1^{v_{1n}}\cdots
\xi_N^{v_{Nn}}).$$ One can verify that the composite $\iota'\iota$
coincides with the morphism
$$\mathbb T^n\to\mathbb T^n, \quad (t_1,\ldots, t_n)\to
(t_1^d,\ldots, t_n^d).$$ So $\iota'\iota$ is a finite morphism. This
implies that $\iota:\mathbb T^n\to \mathbb T^N$ is finite, and hence
$\iota:\mathbb T^n\to\mathbb A'^N$ is quasi-finite.

Fix notation by the following commutative diagram, where all squares
are Cartesian:
$$\begin{array}{ccccc}
\mathbb T^n\times \mathbb A^N&\stackrel{\iota\times \mathrm
{id}}\to&\mathbb A'^N\times\mathbb A^N&
\stackrel{p_2}\to&\mathbb A^N\\
{\scriptstyle {\pi_1}}\downarrow&&{\scriptstyle
p_1}\downarrow&&\downarrow\\
\mathbb T^n&\stackrel{\iota}\to&\mathbb A'^N&\to&\mathrm{Spec}\,k.
\end{array}$$
By the proper base change theorem and the projection formula, we
have
\begin{eqnarray*}
FT_\psi(\iota_! \mathscr K_\chi[n]) &\cong& Rp_{2!} \big(p_1^\ast
\iota_!\mathscr K_\chi\otimes
\langle\,,\rangle^\ast \mathscr L_\psi\big)[n+N]\\
&\cong&  Rp_{2!}\big((\iota\times\mathrm{id})_! \pi_1^\ast\mathscr
K_\chi\otimes
\langle\,,\rangle^\ast \mathscr L_\psi\big)[n+N]\\
&\cong& Rp_{2!}(\iota\times\mathrm{id})_! \big(\pi_1^\ast\mathscr
K_\chi\otimes (\iota\times\mathrm{id})^\ast\langle\,,\rangle^\ast
\mathscr L_\psi\big)[n+N].
\end{eqnarray*}
We have $p_2(\iota\times\mathrm{id})=\pi_2$ and
$\langle\,,\rangle\circ (\iota\times\mathrm{id})=F$, where
$\pi_2:\mathbb T^n\times \mathbb A^N\to \mathbb A^N$ is the
projection and $F:\mathbb T^n\times \mathbb A^N\to \mathbb A^1$ is
the morphism defined by
$$F(t_1,\ldots, t_n, x_1,\ldots,
x_N)=\sum_{j=1}^Nx_jt_1^{w_{1j}}\cdots t_n^{w_{nj}}.$$ So we have
$$FT_\psi(\iota_! \mathscr K_\chi[n])\cong R\pi_{2!}(\pi_1^\ast \mathscr K_\chi\otimes F^\ast\mathscr
L_\psi)[n+N] =\mathrm{Hyp}_\psi(\chi).$$
\end{proof}

To proceed, we need some facts about toric varieties. The reader can
consult \cite{CLS} or \cite{F} for proof. Let $\Sigma$ be a fan in
the dual vector space $(\mathbb R^n)^\ast$ of $\mathbb R^n$. Denote
the toric variety over $k$ associated to the fan $\Sigma$ by
$X(\Sigma)$. It is covered by affine open subschemes
$U_\sigma=\mathrm{Spec}\, k[\mathbb Z^n\cap \check \sigma]$, where
$\sigma$ goes over cones in $\Sigma$ and $\check\sigma$ is the dual
cone in $\mathbb R^n$ of $\sigma$. For the cone $\sigma=0$, we get
the open dense torus $\mathbb T^n=\mathrm{Spec}\, k[\mathbb Z^n]$ of
$X(\Sigma)$. The torus action of $\mathbb T^n$ on itself can be
extended to an action of $\mathbb T^n$ on $X(\Sigma)$. Let
$O_\sigma$ be the torus $\mathrm{Spec}\, k[\mathbb Z^n\cap
\sigma^\perp]$. We have a $k$-epimorphism
$$k[\mathbb Z^n\cap \check\sigma]\to k[\mathbb Z^n\cap
\sigma^\perp], \quad u\mapsto \left\{\begin{array}{cl} u&\hbox{if } u\in \sigma^\perp\\
0&\hbox{if }u\not\in \sigma^\perp\end{array}\right.\hbox{for any }
u\in \mathbb Z^n\cap \check \sigma.$$ It induces a closed immersion
$$O_\sigma \to U_\sigma.$$ So we can regard $O_\sigma$ as a
subscheme of $X(\Sigma)$. We have
$X(\Sigma)=\coprod_{\sigma\in\Sigma} O_\sigma$, and this gives the
orbital decomposition of the toric variety $X(\Sigma)$ under the
torus action.

Suppose $\delta$ is a rational convex polyhedral cone of dimension
$n$ in $\mathbb R^n$. Then the set $\Sigma(\delta)$ of faces of the
dual cone $\check \delta$ is a fan in $(\mathbb R^n)^\ast$. For any
face $\tau$ of $\delta$, let $\mathrm{cone}_\delta(\tau)$ be the
convex polyhedral cone in $\mathbb R^n$ generated by $u'-u$ ($u'\in
\delta$, $u\in \tau)$, and let $(\mathrm{cone}_\delta(\tau))^\vee$
be its dual cone in $(\mathbb R^n)^\ast$. We have
$$(\mathrm{cone}_\delta(\tau))^\vee=\check \delta\cap \tau^\perp,$$
and the map $\tau\mapsto (\mathrm{cone}_\delta(\tau))^\vee$ defines
a one-to-one correspondence between faces of $\delta$ and faces of
$\check \delta$. So we have
$$\Sigma(\delta)=\{(\mathrm{cone}_\delta(\tau))^\vee|\tau\prec\delta\}.$$
The toric variety $X(\Sigma(\delta))$ is affine, and is just
$\mathrm{Spec}\, k[\mathbb Z^n\cap \delta]$, and it has an open
covering $\mathrm{Spec}\, k[\mathbb Z^n\cap
\mathrm{cone}_\delta(\tau)]$ $(\tau\prec\delta)$. Suppose
furthermore that $0$ is a face of $\delta$. Then we have a
$k$-epimorphism
$$k[\mathbb Z^n \cap \delta]\to k,
\quad
u\mapsto \left\{\begin{array}{cl} 1&\hbox{if } u=0\\
0&\hbox{if }u\not=0\end{array}\right.\hbox{for any }u\in \mathbb Z^n
\cap \delta.$$ It induces a closed immersion
$$x_0:\mathrm{Spec}\, k\to \mathrm{Spec}\,
k[\mathbb Z^n\cap \delta]=X(\Sigma(\delta)).$$ We call $x_0$ the
\emph{distinguished point} in $X(\Sigma(\delta))$. It is fixed under
the torus action on $X(\Sigma(\delta))$.

\begin{lemma} Let $\Sigma$ be a fan in $\mathbb R^n$, let $\sigma\in \Sigma$, and
let $\delta_\sigma$ be the image of the dual cone $\check \sigma$
under the canonical homomorphism $\mathbb R^n \to \mathbb
R^n/\sigma^\perp$. Note that $\delta_\sigma$ is a rational convex
polyhedral cone in $\mathbb R^n/\sigma^\perp$ of dimension ${\rm
dim}(\mathbb R^n/\sigma^\perp)$, and $0$ is a face of
$\delta_\sigma$. Let $$x_\sigma:\mathrm{Spec}\,k\to
X(\Sigma(\delta_\sigma))=\mathrm{Spec}\, k[(\mathbb Z^n/\mathbb
Z^n\cap \sigma^\perp)\cap \delta_\sigma]$$ be the distinguished
point in $X(\Sigma(\delta_\sigma))$, let
$$j:{\mathbb T}^n\hookrightarrow X(\Sigma),\;j_\sigma:{\mathbb T}^{{\rm
dim}(\sigma)}=\mathrm{Spec}\, k[\mathbb Z^n/\mathbb Z^n\cap
\sigma^\perp]\hookrightarrow X(\Sigma(\delta_\sigma))$$ be the
immersions of the open dense tori in toric schemes. Denote by
$$p_\sigma:{\mathbb T}^n={\rm Spec}\, k[\mathbb Z^n]\to O_\sigma=
{\rm Spec}\, k [\mathbb Z^n\cap \sigma^\perp]$$ the morphism induced
by the inclusion $k[\mathbb Z^n\cap \sigma^\perp]\hookrightarrow
k[\mathbb Z^n].$

(i)  If $\mathscr K_\chi$ is not the inverse image under $p_\sigma$
of any Kummer sheaf on $O_\sigma$, then $(j_{!\ast}(\mathscr
K_\chi[n]))|_{O_\sigma}$ and $(Rj_{\ast}(\mathscr
K_\chi[n]))|_{O_\sigma}$ are acyclic, where $O_\sigma$ is considered
as a subscheme of $X(\Sigma)$.

(ii) If $\mathscr K_\chi=p_\sigma^\ast \mathscr K_{\chi_\sigma}$ for
some Kummer sheaf $\mathscr K_{\chi_\sigma}$ on $O_\sigma$, then
\begin{eqnarray*}
(j_{!\ast}(\mathscr K_\chi[n]))|_{O_\sigma}&\cong & (\mathscr
K_{\chi_\sigma}[n-{\rm dim}(\sigma)])\otimes x_\sigma^\ast
({j_\sigma}_{!\ast}(\overline {\mathbb Q}_\ell[{\rm
dim}(\sigma)])),\\
(Rj_{\ast}(\mathscr K_\chi[n]))|_{O_\sigma}&\cong & (\mathscr
K_{\chi_\sigma}[n-{\rm dim}(\sigma)])\otimes x_\sigma^\ast
({Rj}_{\sigma\ast}(\overline {\mathbb Q}_\ell[{\rm dim}(\sigma)])),
\end{eqnarray*}
where by abuse of notation we denote the inverse image on any
$k$-scheme of the complex $x_\sigma^\ast
({j_\sigma}_{!\ast}(\overline {\mathbb Q}_\ell[{\rm dim}(\sigma)]))$
(resp. $x_\sigma^\ast ({Rj}_{\sigma\ast}(\overline {\mathbb
Q}_\ell[{\rm dim}(\sigma)])))$ on $\mathrm{Spec}\, k$ also by
$x_\sigma^\ast ({j_\sigma}_{!\ast}(\overline {\mathbb Q}_\ell[{\rm
dim}(\sigma)]))$ (resp. $x_\sigma^\ast ({Rj}_{\sigma\ast}(\overline
{\mathbb Q}_\ell[{\rm dim}(\sigma)])))$.
\end{lemma}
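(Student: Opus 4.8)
The plan is to reduce, via the standard local product structure of a toric variety along an orbit, to the case where the orbit is the unique torus-fixed point of an affine toric variety, and to settle that case by descent along the $m$-power isogeny of the torus. First I would record the elementary product decomposition of the affine chart $U_\sigma=\mathrm{Spec}\,k[\mathbb Z^n\cap\check\sigma]$. Choose a splitting $\mathbb Z^n=(\mathbb Z^n\cap\sigma^\perp)\oplus L$. Since $\sigma^\perp$ is the lineality space of $\check\sigma$, one checks that $\mathbb Z^n\cap\check\sigma=(\mathbb Z^n\cap\sigma^\perp)\oplus(L\cap\check\sigma)$ as monoids, with $L\cap\check\sigma$ identified with $(\mathbb Z^n/\mathbb Z^n\cap\sigma^\perp)\cap\delta_\sigma$; hence
$$k[\mathbb Z^n\cap\check\sigma]\cong k[\mathbb Z^n\cap\sigma^\perp]\otimes_k k[(\mathbb Z^n/\mathbb Z^n\cap\sigma^\perp)\cap\delta_\sigma],\qquad\text{i.e.}\qquad U_\sigma\cong O_\sigma\times_k X(\Sigma(\delta_\sigma)).$$
Under this isomorphism the open torus $\mathbb T^n$ corresponds to $O_\sigma\times\mathbb T^{\dim\sigma}$, the orbit $O_\sigma\subset U_\sigma$ to $O_\sigma\times\{x_\sigma\}$, the morphism $p_\sigma$ to the first projection, and $\mathscr K_\chi$ to $\mathscr K_{\chi'}\boxtimes\mathscr K_{\chi''}$, where $\chi',\chi''$ are the components of $\chi$ for the decomposition $\mathbb T^n=O_\sigma\times\mathbb T^{\dim\sigma}$; in particular $\mathscr K_\chi\cong p_\sigma^\ast\mathscr K_{\chi_\sigma}$ exactly when $\chi''$ is trivial, and then $\chi_\sigma=\chi'$. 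As $j_{!\ast}$ and $Rj_\ast$ commute with restriction to the open subscheme $U_\sigma$ of $X(\Sigma)$, I may replace $X(\Sigma)$ by $U_\sigma$ and $j$ by $\mathrm{id}_{O_\sigma}\times j_\sigma$.

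Next, $j_{!\ast}$ and $Rj_\ast$ commute with external tensor product by a fixed lisse shifted sheaf on the first factor (smooth base change for the intermediate extension, together with tensoring by a lisse sheaf), which yields
$$j_{!\ast}(\mathscr K_\chi[n])\cong\mathscr K_{\chi'}[n-\dim\sigma]\boxtimes j_{\sigma!\ast}(\mathscr K_{\chi''}[\dim\sigma]),\qquad Rj_\ast(\mathscr K_\chi[n])\cong\mathscr K_{\chi'}[n-\dim\sigma]\boxtimes Rj_{\sigma\ast}(\mathscr K_{\chi''}[\dim\sigma]).$$
Restricting to $O_\sigma=O_\sigma\times\{x_\sigma\}$ turns both into $\mathscr K_{\chi'}[n-\dim\sigma]$ tensored with $x_\sigma^\ast$ of the corresponding complex on $X(\Sigma(\delta_\sigma))$. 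If $\chi''$ is trivial this is exactly the asserted formula in (ii). If $\chi''$ is nontrivial, (i) reduces to showing that $x_\sigma^\ast j_{\sigma!\ast}(\mathscr K_{\chi''}[\dim\sigma])$ and $x_\sigma^\ast Rj_{\sigma\ast}(\mathscr K_{\chi''}[\dim\sigma])$ vanish, i.e.\ to the case of an affine toric variety $X=X(\Sigma(\delta))$ with $\delta$ full-dimensional and strongly convex, the orbit being the distinguished point $x_0$ (the unique torus-fixed point), and a \emph{nontrivial} Kummer sheaf, which I now call $\mathscr K_\chi$, on the $d$-dimensional torus $\mathbb T^d$ with $d=\dim\delta$.

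For this remaining case I would descend along $[m]\colon\mathbb T^d\to\mathbb T^d$, $t\mapsto t^m$, where $m$ is a prime-to-$p$ integer divisible by the order of $\chi$. It extends to a finite morphism $\overline{[m]}\colon X\to X$ with $\overline{[m]}^{-1}(\mathbb T^d)=\mathbb T^d$ and with $\overline{[m]}^{-1}(x_0)$ supported at the single point $x_0$. One has $[m]_\ast\overline{\mathbb Q}_\ell\cong\bigoplus_\rho\mathscr K_\rho$, the sum over characters $\rho$ of order dividing $m$, with $\mathscr K_\chi$ occurring among the summands. Since $j_{!\ast}$ and $Rj_\ast$ commute with the finite pushforward $\overline{[m]}_\ast$ and with finite direct sums, $\overline{[m]}_\ast j_{!\ast}(\overline{\mathbb Q}_\ell[d])\cong\bigoplus_\rho j_{!\ast}(\mathscr K_\rho[d])$, and likewise for $Rj_\ast$. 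Applying $x_0^\ast$ and using that $x_0$ has a single preimage under $\overline{[m]}$, the left side becomes $x_0^\ast j_{!\ast}(\overline{\mathbb Q}_\ell[d])$, whereas on the right the summand with $\rho=1$ is $x_0^\ast j_{!\ast}(\overline{\mathbb Q}_\ell[d])$ itself; comparing the dimensions of the cohomology sheaves on the two sides forces every summand with $\rho\neq1$ to vanish, in particular $x_0^\ast j_{!\ast}(\mathscr K_\chi[d])=0$. The identical argument for $Rj_\ast$ gives $x_0^\ast Rj_\ast(\mathscr K_\chi[d])=0$. Together with the formulas of the previous paragraph this proves (i).

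The main obstacle is not any single hard step but the bookkeeping of the first one: one must arrange the decomposition $U_\sigma\cong O_\sigma\times X(\Sigma(\delta_\sigma))$ to be simultaneously compatible with the open torus, with the orbit $O_\sigma$, with $p_\sigma$, and with the splitting $\mathscr K_\chi\cong\mathscr K_{\chi'}\boxtimes\mathscr K_{\chi''}$ of the Kummer sheaf. Everything afterwards is formal, granting the standard compatibilities of $j_{!\ast}$ with restriction to opens, with smooth pullback and lisse twist, and with finite pushforward.
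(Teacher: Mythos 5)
Your proof is correct, and it is essentially the argument that the paper delegates to the citation of \cite[Lemma 1.3]{Fu}: the local product decomposition $U_\sigma\cong O_\sigma\times_k X(\Sigma(\delta_\sigma))$, the compatibility of $j_{!\ast}$ and $Rj_\ast$ with restriction to opens and with external product by the lisse perverse sheaf $\mathscr K_{\chi'}[n-\dim\sigma]$, and Kummer descent along the $m$-power isogeny at the distinguished fixed point. The only point to tidy is the choice of $m$: take $m=q-1$ (or work over $\bar k$, since the acyclicity in (i) is a geometric statement), so that $[m]_\ast\overline{\mathbb Q}_\ell$ really does split over $k$ as the direct sum of the Kummer sheaves $\mathscr K_\rho$ with $\rho^m=1$.
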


\begin{proof} The statements about $(j_{!\ast}(\mathscr
K_\chi[n]))|_{O_\sigma}$ is \cite[Lemma 1.3]{Fu}. The same proof
works for $(Rj_{\ast}(\mathscr K_\chi[n]))|_{O_\sigma}$.
\end{proof}

In the following, we apply Lemma 1.2 to the special case where
$\Sigma=\Sigma(\delta)$ for a rational convex polyhedral cone
$\delta$ of dimension $n$ in $\mathbb R^n$. For any face $\tau$ of
$\delta$, let $\sigma_\tau=(\mathrm{cone}_\delta(\tau))^\vee$. We
have $\sigma_\tau^\perp=\mathrm{span}\,\tau$ and
$O_{\sigma_\tau}=\mathrm{Spec}\, k[\mathbb Z^n\cap
\mathrm{span}\,\tau]$. We denote the torus $\mathrm{Spec}\,
k[\mathbb Z^n\cap \mathrm{span}\,\tau]$ by $\mathbb T_\tau$. To fix
notation, we state Lemma 1.2 for this special case as follows:

\begin{lemma} Let $\delta$ be a rational convex polyhedral cone of dimension
$n$ in $\mathbb R^n$, let $\mathrm{cone}^\circ_\delta(\tau)$ be the
image of $\mathrm{cone}_\delta(\tau)$ under the canonical
homomorphism $\mathbb R^n \to \mathbb R^n/\mathrm{span}\,\tau$. It
is a rational convex polyhedral cone in $\mathbb
R^n/\mathrm{span}\,\tau$ of dimension ${\rm dim}(\mathbb
R^n/\mathrm{span}\,\tau)$, and $0$ is a face of
$\mathrm{cone}^\circ_\delta(\tau)$. Let
$$x_\tau:\mathrm{Spec}\,k\to
X(\Sigma(\mathrm{cone}^\circ_\delta(\tau)))=\mathrm{Spec}\,
k[(\mathbb Z^n/\mathbb Z^n\cap \mathrm{span}\,\tau)\cap
\mathrm{cone}^\circ_\delta(\tau)]$$ be the distinguished point in
$X(\Sigma(\mathrm{cone}^\circ_\delta(\tau)))$, let
$$j:{\mathbb T}^n\hookrightarrow X(\Sigma(\delta)),\;j_\tau:{\mathbb T}^{n-\mathrm{dim}\,\tau}
=\mathrm{Spec}\, k[\mathbb Z^n/\mathbb
Z^n\cap\mathrm{span}\,\tau]\hookrightarrow
X(\Sigma(\mathrm{cone}^\circ_\delta(\tau)))$$ be the immersions of
the open dense tori in toric schemes. Denote by $p_\tau:\mathbb
T^n\to \mathbb T_\tau$ the morphism induced by the inclusion
$k[\mathbb Z^n\cap\mathrm{span}\, \tau]\hookrightarrow k[\mathbb
Z^n].$

(i)  If $\mathscr K_\chi$ is not the inverse image under $p_\tau$ of
any Kummer sheaf on $\mathbb T_\tau$, then $(j_{!\ast}(\mathscr
K_\chi[n]))|_{O_{\sigma_\tau}}$ and $(Rj_{\ast}(\mathscr
K_\chi[n]))|_{O_{\sigma_\tau}}$ are acyclic, where $O_{\sigma_\tau}$
is considered as a subscheme of $X(\Sigma(\delta))$.

(ii) If $\mathscr K_\chi=p_\tau^\ast \mathscr K_{\chi_\tau}$ for
some Kummer sheaf $\mathscr K_{\chi_\tau}$ on $\mathbb T_\tau$, then
\begin{eqnarray*}
(j_{!\ast}(\mathscr K_\chi[n]))|_{O_{\sigma_\tau}}&\cong & (\mathscr
K_{\chi_\tau}[{\rm dim}(\tau)])\otimes x_\tau^\ast
({j_\tau}_{!\ast}(\overline {\mathbb Q}_\ell[n-{\rm
dim}(\tau)])),\\
(Rj_{\ast}(\mathscr K_\chi[n]))|_{O_{\sigma_\tau}}&\cong & (\mathscr
K_{\chi_\tau}[{\rm dim}(\tau)])\otimes x_\tau^\ast
({Rj}_{\tau\ast}(\overline {\mathbb Q}_\ell[n-{\rm dim}(\tau)])).
\end{eqnarray*}
\end{lemma}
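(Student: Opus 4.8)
The plan is to obtain this statement by specializing Lemma 1.2 to the fan $\Sigma=\Sigma(\delta)$ and matching notation through the order-reversing bijection $\tau\mapsto\sigma_\tau:=(\mathrm{cone}_\delta(\tau))^\vee=\check\delta\cap\tau^\perp$ between the faces of $\delta$ and the cones of $\Sigma(\delta)$. There is essentially no new content here beyond Lemma 1.2 (which is itself \cite[Lemma 1.3]{Fu}); the task is purely one of bookkeeping, verifying that each ingredient of Lemma 1.2, when $\Sigma=\Sigma(\delta)$ and $\sigma=\sigma_\tau$, is the corresponding ingredient named in Lemma 1.3.

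First I would record the cone identities needed for the translation. Since $\delta$ has dimension $n$ and has $0$ as a face, $\delta$ is strongly convex and $\check\delta$ is full-dimensional; for each face $\tau\prec\delta$ one has $\sigma_\tau^\perp=\mathrm{span}\,\tau$ (already noted in the text preceding the lemma, equivalently the fact that $\check\delta\cap\tau^\perp$ is full-dimensional in $\tau^\perp$), hence $\dim\sigma_\tau=n-\dim\tau$. Consequently the orbit $O_{\sigma_\tau}=\mathrm{Spec}\,k[\mathbb Z^n\cap\sigma_\tau^\perp]$ is exactly $\mathbb T_\tau=\mathrm{Spec}\,k[\mathbb Z^n\cap\mathrm{span}\,\tau]$, and the morphism $p_{\sigma_\tau}\colon\mathbb T^n\to O_{\sigma_\tau}$ appearing in Lemma 1.2 is the morphism $p_\tau\colon\mathbb T^n\to\mathbb T_\tau$.

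Next I would identify the remaining data attached to $\sigma_\tau$ in Lemma 1.2 with the data of Lemma 1.3. The cone $\mathrm{cone}_\delta(\tau)$ is generated by $\delta$ together with $-\tau$, hence is finitely generated and in particular closed, so biduality gives $\check{\sigma_\tau}=((\mathrm{cone}_\delta(\tau))^\vee)^\vee=\mathrm{cone}_\delta(\tau)$. Therefore $\delta_{\sigma_\tau}$, which by its definition in Lemma 1.2 is the image of $\check{\sigma_\tau}$ under $\mathbb R^n\to\mathbb R^n/\sigma_\tau^\perp=\mathbb R^n/\mathrm{span}\,\tau$, is precisely $\mathrm{cone}_\delta^\circ(\tau)$. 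Hence $X(\Sigma(\delta_{\sigma_\tau}))=X(\Sigma(\mathrm{cone}_\delta^\circ(\tau)))$, the distinguished point $x_{\sigma_\tau}$ is $x_\tau$, the open-torus immersion $j_{\sigma_\tau}$ is $j_\tau$, and $\mathbb T^{\dim\sigma_\tau}=\mathbb T^{n-\dim\tau}$.

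Finally I would substitute. For part (i), the hypothesis ``$\mathscr K_\chi$ is not $p_\tau^\ast$ of a Kummer sheaf on $\mathbb T_\tau$'' is literally ``$\mathscr K_\chi$ is not $p_{\sigma_\tau}^\ast$ of a Kummer sheaf on $O_{\sigma_\tau}$'', so Lemma 1.2(i) yields the asserted acyclicity of $(j_{!\ast}(\mathscr K_\chi[n]))|_{O_{\sigma_\tau}}$ and $(Rj_\ast(\mathscr K_\chi[n]))|_{O_{\sigma_\tau}}$. For part (ii), writing $\mathscr K_\chi=p_\tau^\ast\mathscr K_{\chi_\tau}$ and using $n-\dim\sigma_\tau=\dim\tau$, the two isomorphisms of Lemma 1.2(ii) become $(\mathscr K_{\chi_\tau}[\dim\tau])\otimes x_\tau^\ast(j_{\tau!\ast}(\overline{\mathbb Q}_\ell[n-\dim\tau]))$ and $(\mathscr K_{\chi_\tau}[\dim\tau])\otimes x_\tau^\ast(Rj_{\tau\ast}(\overline{\mathbb Q}_\ell[n-\dim\tau]))$, which are the claimed formulas. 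The only step requiring any care is the biduality identification $\delta_{\sigma_\tau}=\mathrm{cone}_\delta^\circ(\tau)$ together with the dimension count $\dim\sigma_\tau=n-\dim\tau$; both are standard facts about the fan of faces of a dual cone, so no genuine obstacle arises.
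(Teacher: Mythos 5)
Your proposal is correct and is exactly the paper's approach: the paper explicitly introduces Lemma 1.3 as "Lemma 1.2 stated for the special case $\Sigma=\Sigma(\delta)$," asserting the dictionary $\sigma_\tau=(\mathrm{cone}_\delta(\tau))^\vee$, $\sigma_\tau^\perp=\mathrm{span}\,\tau$, $O_{\sigma_\tau}=\mathbb T_\tau$ without further proof. You simply spell out the bookkeeping (biduality giving $\delta_{\sigma_\tau}=\mathrm{cone}_\delta^\circ(\tau)$ and the dimension count $\dim\sigma_\tau=n-\dim\tau$) in more detail than the paper does, and all of it checks out.
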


The morphism $\iota:\mathbb T^n\to\mathbb A'^N$ in Lemma 1.1 is
induced by the $k$-homomorphism
$$\iota^\natural: k[\xi_1,\ldots, \xi_N]\to k[t_1^{\pm 1},\ldots, t_n^{\pm 1}],
\quad \xi_j\mapsto t_1^{w_{1j}}\cdots t_n^{w_{nj}}.$$ Let $\overline
{\mathrm{im}\,\iota}$ be the scheme theoretic image of $\iota$. It
is the closed subscheme $\mathrm {Spec}\, (k[\xi_1,\ldots,
\xi_N]/\mathrm{ker}\, \iota^\natural)$ of $\mathbb A'^N$. We have
$$k[\xi_1,\ldots, \xi_N]/\mathrm{ker}\, \iota^\natural\cong
\mathrm{im}\,\iota^\natural,$$ and it is clear that
$\mathrm{im}\,\iota^\natural$ is spanned by monomials of the form
$t_1^{w_1}\cdots t_n^{w_n}$, where $\mathbf w=\left(\begin{array}{c}
w_1\\ \vdots \\ w_n\end{array} \right)$ lies in the sub-semigroup
$S$ of $\mathbb Z^n$ generated by $\mathbf w_1,\ldots, \mathbf w_N$.
So we have
$$\mathrm{im}\,\iota^\natural\cong k[S].$$
Hence $\overline {\mathrm{im}\,\iota}$ is isomorphic to
$\mathrm{Spec}\, k[S]$, which is an affine toric variety (not
necessarily normal). Let $\delta$ be the convex polyhedral cone in
$\mathbb R^n$ generated by $\mathbf w_1,\ldots, \mathbf w_N$. By
\cite[Proposition 1.3.8]{CLS}, the normalization of $\mathrm{Spec}\,
k[S]$ is the toric variety $X(\Sigma(\delta))=\mathrm {Spec}\,
k[\mathbb Z^n\cap\delta]$. Denote by $g$ the composite
$$X(\Sigma(\delta)) \to \overline{\mathrm{im}\,\iota} \to \mathbb
A'^N.$$ It is induced by the $k$-homomorphism
$$k[\xi_1,\ldots, \xi_N]\to k[\mathbb
Z^n\cap\delta], \quad \xi_j\mapsto \mathbf w_j.$$ Note that $g$ is a
finite morphism. Let $j: \mathbb T^n=\mathrm{Spec}\, k[\mathbb Z^n]
\hookrightarrow X(\Sigma(\delta))$ be the immersion of the open
dense torus. It is induced by the inclusion $k[\mathbb
Z^n\cap\delta] \hookrightarrow k[\mathbb Z^n].$  We have $\iota=g
j$.

\begin{lemma} Suppose that $\chi$
satisfies the non-resonance condition. Then the canonical morphism
$j_!\mathscr K_\chi\to Rj_\ast \mathscr K_{\chi}$ is an isomorphism.
\end{lemma}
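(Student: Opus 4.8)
The plan is to identify the cone of the canonical adjunction morphism $j_!\mathscr K_\chi\to Rj_\ast\mathscr K_\chi$ and show it vanishes, by restricting it to each torus orbit of the affine toric variety $X(\Sigma(\delta))$ and invoking Lemma 1.3. The whole substance of the argument is carried by Lemma 1.3; the non-resonance hypothesis is precisely what is needed to feed its case (i) into every boundary orbit.

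First I would recall the geometry of the situation. The morphism $j:\mathbb T^n\hookrightarrow X(\Sigma(\delta))=\mathrm{Spec}\,k[\mathbb Z^n\cap\delta]$ is the inclusion of the open dense torus, and $X(\Sigma(\delta))$ decomposes into the toric orbits $O_{\sigma_\tau}=\mathrm{Spec}\,k[\mathbb Z^n\cap\mathrm{span}\,\tau]=\mathbb T_\tau$ indexed by the faces $\tau$ of $\delta$, where $\sigma_\tau=(\mathrm{cone}_\delta(\tau))^\vee$; the face $\tau=\delta$ recovers $\mathbb T^n$ itself, so that $X(\Sigma(\delta))\setminus\mathbb T^n=\coprod_{\tau\prec\delta,\;\tau\neq\delta}O_{\sigma_\tau}$. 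Let $i$ denote the closed immersion of this complement. Applying the standard open--closed distinguished triangle to $Rj_\ast\mathscr K_\chi$, and using $j^\ast Rj_\ast\mathscr K_\chi\cong\mathscr K_\chi$, one sees that the cone of $j_!\mathscr K_\chi\to Rj_\ast\mathscr K_\chi$ is $i_\ast i^\ast Rj_\ast\mathscr K_\chi$. Hence the morphism is an isomorphism if and only if $(Rj_\ast\mathscr K_\chi)|_{O_{\sigma_\tau}}=0$ for every proper face $\tau$ of $\delta$.

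To establish this vanishing, fix a proper face $\tau$ of $\delta$. Since $\chi$ satisfies the non-resonance condition, $\mathscr K_\chi$ is not the inverse image $p_\tau^\ast\mathscr K$ of any Kummer sheaf $\mathscr K$ on $\mathbb T_\tau$, so case (i) of Lemma 1.3 applies and shows that $(Rj_\ast(\mathscr K_\chi[n]))|_{O_{\sigma_\tau}}$ is acyclic, i.e. $(Rj_\ast\mathscr K_\chi)|_{O_{\sigma_\tau}}=0$ after undoing the harmless shift. As $\tau$ was an arbitrary proper face of $\delta$, we conclude $i^\ast Rj_\ast\mathscr K_\chi=0$, so the cone vanishes and $j_!\mathscr K_\chi\to Rj_\ast\mathscr K_\chi$ is an isomorphism.

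I do not anticipate a genuine obstacle, since Lemma 1.3 does the real analytic work (computing the boundary stalks and costalks of $Rj_\ast$ of a Kummer sheaf in terms of the combinatorics of $\mathrm{cone}_\delta^\circ(\tau)$). The only point requiring a small check is that the non-resonance condition, although it may be verified using only the codimension-one faces of $\delta$, supplies the hypothesis of Lemma 1.3(i) at \emph{every} boundary orbit: if $\tau\subseteq\tau'$ then $\mathrm{span}\,\tau\subseteq\mathrm{span}\,\tau'$, so $p_\tau$ factors through $p_{\tau'}$, and therefore "$\mathscr K_\chi$ is not a $p_{\tau'}^\ast$-pullback" propagates to "$\mathscr K_\chi$ is not a $p_\tau^\ast$-pullback"; thus Lemma 1.3(i) is indeed available on each stratum of $X(\Sigma(\delta))\setminus\mathbb T^n$.
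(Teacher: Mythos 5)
Your argument is correct and is essentially the paper's own proof: both reduce the claim to the vanishing of $(Rj_\ast\mathscr K_\chi)|_{O_{\sigma_\tau}}$ for every proper face $\tau$ of $\delta$, obtain that vanishing from Lemma~1.3(i) via the non-resonance hypothesis, and observe that these orbits exhaust $X(\Sigma(\delta))\setminus\mathbb T^n$. Your explicit identification of the cone of the adjunction morphism and the remark on why non-resonance is available at every face (not just the codimension-one ones) are harmless elaborations of what the paper leaves implicit.
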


\begin{proof} If
$\chi$ satisfies the non-resonance condition, then by Lemma 1.3 (i),
we have $(Rj_\ast\mathscr K_\chi)|_{O_{\sigma_\tau}}=0$ for any
proper face $\tau$ of $\delta$. The union of $O_{\sigma_\tau}$ where
$\tau$ goes over proper faces of $\delta$ is exactly the complement
of the open dense torus in $X(\Sigma(\delta))$. Hence $j_!\mathscr
K_\chi\cong Rj_\ast \mathscr K_\chi$.
\end{proof}

We are now ready to prove Theorem 0.3.

\begin{proof}[Proof of Theorem 0.3]  Since $\iota$ is
quasi-finite and affine, $\iota_!\mathscr K_\chi[n]$ is perverse by
\cite[Corollaire 4.1.3]{BBD}. By Lemma 1.1 and \cite[Th\'eor\`eme
1.3.2.3]{L}, $\mathrm{Hyp}_\psi(\chi)$ is perverse. The perverse
sheaf $\iota_!\mathscr K_\chi[n]$ is mixed of weights $\leq n$. By
Lemma 1.1 and \cite[Th\'eor\`eme 3.3.1]{D},
$\mathrm{Hyp}_\psi(\chi)$ is mixed of weights $\leq n+N$.

Suppose furthermore that $\chi$ satisfies the non-resonance
condition. By Lemma 1.3, the canonical morphism $j_!\mathscr
K_\chi[n]\to Rj_\ast \mathscr K_\chi[n]$ is an isomorphism. We have
$\iota=g j$ and $g$ is a finite morphism. It follows that the
canonical morphism $\iota_!\mathscr K_\chi[n]\to R\iota_\ast
\mathscr K_\chi[n]$ is an isomorphism, and hence $\iota_!\mathscr
K_{\chi}[n]\cong \iota_{!\ast}(\mathscr K_{\chi}[n])$. Suppose
furthermore that $\mathbf {w}_1, \ldots, \mathbf {w}_N$ generate
$\mathbb Z^n$. We will show in a moment that $\iota$ is then an
immersion. So $\iota_{!\ast}(\mathscr K_{\chi}[n])$ is an
irreducible perverse sheaf by \cite[Th\'eor\`eme 4.3.1 (ii)]{BBD}.
By Lemma 1.1 and \cite[Th\'eor\`eme 1.3.2.3]{L},
$\mathrm{Hyp}_\psi(\chi)$ is also an irreducible perverse sheaf. By
\cite[Corollaire 5.4.3]{BBD}, $\iota_{!\ast}(\mathscr K_\chi[n])$ is
a pure perverse sheaf of weight $n$. By Lemma 1.1 and
\cite[Th\'eor\`eme 2.2.1]{KL}, $\mathrm{Hyp}_\psi(\chi)$ is a pure
perverse sheaf of weight $n+N$.

Let's prove $\iota$ is an immersion under the condition that
$\mathbf {w}_1, \ldots, \mathbf {w}_N$ generate $\mathbb Z^n$. There
exists an $(N\times n)$-matrix $B=(v_{ij})$ with integer entries
such that $AB=I_n$. Consider the morphism
$$\iota': \mathbb T^N\to \mathbb T^n,\quad (\xi_1,\ldots, \xi_N)\mapsto
(\xi_1^{v_{11}}\cdots \xi_N^{v_{N1}},\ldots, \xi_1^{v_{1n}}\cdots
\xi_N^{v_{Nn}}).$$ One can verify that
$\iota'\iota=\mathrm{id}_{\mathbb T^n}$. This implies that
$\iota:\mathbb T^n\to \mathbb T^N$ is a closed immersion, and hence
$\iota:\mathbb T^n\to\mathbb A'^N$ is an immersion.
\end{proof}

\begin{theorem} Keep the notation in Lemma 1.3. Let $\delta$ be the convex
polyhedral cone in $\mathbb R^n$ generated by $\mathbf w_1,\ldots,
\mathbf w_N$, let $T$ be the set of proper faces $\tau$ of $\delta$
so that $\mathscr K_\chi\cong p_\tau^\ast\mathscr K_{\chi_\tau}$ for
a Kummer sheaf $\mathscr K_{\chi_\tau}$ on the torus $\mathbb
T_\tau=\mathrm{Spec}\, k[\mathbb Z^n\cap \mathrm{span}\,\tau]$, let
$N_\tau$ be the number of those $\mathbf w_j$ ($j=1,\ldots, N$)
lying in $\tau$, and let $q_\tau:\mathbb A^N\to\mathbb A^{N_\tau}$
be the projection $$(x_1,\ldots, x_N)\mapsto (x_j)_{\mathbf w_j\in
\tau}.$$ Then in the $K$-group $K(D_c^b(\mathbb
A^N,\overline{\mathbb Q}_\ell))$ of $D_c^b(\mathbb
A^N,\overline{\mathbb Q}_\ell)$, we have
\begin{eqnarray*}
\Big[\mathrm{Hyp}_\psi(\chi)\Big]=
\Big[FT_\psi(\iota_{!\ast}(\mathscr K_\chi[n]))\Big]-\sum_{\tau\in
T}\Big[q_\tau^\ast \mathrm{Hyp}_\psi(\chi_\tau)[N-N_\tau]\otimes
x_\tau^\ast(j_{\tau !\ast}\overline{\mathbb Q}_\ell
[n-\mathrm{dim}\,\tau])\Big],
\end{eqnarray*}
where $\mathrm{Hyp}_\psi(\chi_\tau)$ is the GKZ-hypergeometric sheaf
associated to the Kummer sheaf $\chi_\tau$ and the vectors $(\mathbf
w_j)_{\mathbf w_j\in\tau}$ in $\mathbb Z^n\cap\mathrm{span}\,\tau$.
\end{theorem}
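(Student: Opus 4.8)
The plan is to compute the class $[\iota_!\mathscr{K}_\chi[n]]$ in $K(D_c^b(\mathbb{A}'^N,\overline{\mathbb{Q}}_\ell))$ by comparing the extension by zero $\iota_!\mathscr{K}_\chi[n]$ with the intermediate extension over the boundary orbits, and then to transport the computation through $FT_\psi$ by Lemma 1.1. Recall from the construction of $g$ and $j$ that $\iota=gj$, where $j:\mathbb{T}^n\hookrightarrow X(\Sigma(\delta))$ is the open immersion of the dense torus and $g:X(\Sigma(\delta))\to\mathbb{A}'^N$ is the finite morphism constructed above; throughout, $\iota_{!\ast}(\mathscr{K}_\chi[n])$ denotes $g_\ast(j_{!\ast}(\mathscr{K}_\chi[n]))$, which agrees with the usual intermediate extension along $\iota$ when $\iota$ is an immersion.

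Since $X(\Sigma(\delta))\setminus\mathbb{T}^n=\coprod_{\tau\prec\delta,\,\tau\ne\delta}O_{\sigma_\tau}$, iterating the localization triangle over the orbital stratification gives, for every $M\in D_c^b(X(\Sigma(\delta)),\overline{\mathbb{Q}}_\ell)$, the $K$-group identity $[M]=[j_!(M|_{\mathbb{T}^n})]+\sum_{\tau\ne\delta}[h_{\tau!}(M|_{O_{\sigma_\tau}})]$, where $h_\tau:O_{\sigma_\tau}\hookrightarrow X(\Sigma(\delta))$ is the locally closed immersion. Applying this to $M=j_{!\ast}(\mathscr{K}_\chi[n])$, whose restriction to $\mathbb{T}^n$ is $\mathscr{K}_\chi[n]$, yields
$$[j_!\mathscr{K}_\chi[n]]=[j_{!\ast}(\mathscr{K}_\chi[n])]-\sum_{\tau\ne\delta}\big[h_{\tau!}\big((j_{!\ast}(\mathscr{K}_\chi[n]))|_{O_{\sigma_\tau}}\big)\big].$$
By Lemma 1.3 the $\tau$-th summand is zero unless $\tau\in T$, and for $\tau\in T$ we have $(j_{!\ast}(\mathscr{K}_\chi[n]))|_{O_{\sigma_\tau}}\cong\mathscr{K}_{\chi_\tau}[\mathrm{dim}\,\tau]\otimes C_\tau$, where $C_\tau:=x_\tau^{\ast}(j_{\tau!\ast}(\overline{\mathbb{Q}}_\ell[n-\mathrm{dim}\,\tau]))$ is a complex of $\overline{\mathbb{Q}}_\ell$-vector spaces pulled back to $O_{\sigma_\tau}$.

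Now apply $g_\ast$. Since $g$ is finite, $g_\ast=g_!$ is exact and passes to $K$-classes, $g_\ast h_{\tau!}=(gh_\tau)_!$, and $g_\ast j_{!\ast}(\mathscr{K}_\chi[n])=\iota_{!\ast}(\mathscr{K}_\chi[n])$. A direct computation on coordinate rings, using $\sigma_\tau^\perp=\mathrm{span}\,\tau$ and the epimorphism $k[\mathbb{Z}^n\cap\delta]\to k[\mathbb{Z}^n\cap\mathrm{span}\,\tau]$, shows that $gh_\tau:\mathbb{T}_\tau\to\mathbb{A}'^N$ factors as $\mathbb{T}_\tau\xrightarrow{\iota_\tau}\mathbb{A}'^{N_\tau}\stackrel{b_\tau}{\hookrightarrow}\mathbb{A}'^N$, where $b_\tau$ is the inclusion of the coordinate subspace $\{\xi_j=0:\mathbf{w}_j\notin\tau\}$ and $\iota_\tau:\mathbb{T}_\tau\to\mathbb{A}'^{N_\tau}$, $\xi_j\mapsto\mathbf{w}_j$ ($\mathbf{w}_j\in\tau$), is precisely the morphism ``$\iota$'' attached to the vectors $(\mathbf{w}_j)_{\mathbf{w}_j\in\tau}$ in $\mathbb{Z}^n\cap\mathrm{span}\,\tau$. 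Hence in $K(D_c^b(\mathbb{A}'^N,\overline{\mathbb{Q}}_\ell))$,
$$[\iota_!\mathscr{K}_\chi[n]]=[\iota_{!\ast}(\mathscr{K}_\chi[n])]-\sum_{\tau\in T}\big[b_{\tau!}\big((\iota_\tau)_!\mathscr{K}_{\chi_\tau}[\mathrm{dim}\,\tau]\big)\otimes C_\tau\big].$$

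Finally apply $FT_\psi$, which is triangulated, hence additive on $K$-groups, and recall $[\mathrm{Hyp}_\psi(\chi)]=[FT_\psi(\iota_!\mathscr{K}_\chi[n])]$ by Lemma 1.1. Split $\mathbb{A}^N=\mathbb{A}^{N_\tau}\times\mathbb{A}^{N-N_\tau}$ dually to $\mathbb{A}'^N=\mathbb{A}'^{N_\tau}\times\mathbb{A}'^{N-N_\tau}$; the isomorphism $\mathscr{L}_\psi(a+b)\cong\mathscr{L}_\psi(a)\otimes\mathscr{L}_\psi(b)$ together with the K\"unneth formula give $FT_\psi(K_1\boxtimes K_2)\cong FT_\psi(K_1)\boxtimes FT_\psi(K_2)$ for the Fourier transforms on $\mathbb{A}^N$, $\mathbb{A}^{N_\tau}$, $\mathbb{A}^{N-N_\tau}$ (the shifts $[N]$ and $[N_\tau]+[N-N_\tau]$ agreeing), and since $b_{\tau!}M=M\boxtimes\delta_0$ with $\delta_0$ the skyscraper $\overline{\mathbb{Q}}_\ell$ at the origin of $\mathbb{A}'^{N-N_\tau}$, while $FT_\psi(\delta_0)$ is the constant sheaf $\overline{\mathbb{Q}}_\ell[N-N_\tau]$ on $\mathbb{A}^{N-N_\tau}$, one obtains $FT_\psi(b_{\tau!}M)\cong q_\tau^{\ast}(FT_\psi(M))[N-N_\tau]$. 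Taking $M=(\iota_\tau)_!\mathscr{K}_{\chi_\tau}[\mathrm{dim}\,\tau]$, Lemma 1.1 applied to the face $\tau$ identifies $FT_\psi(M)$ with $\mathrm{Hyp}_\psi(\chi_\tau)$ on $\mathbb{A}^{N_\tau}$, and pulling the complex of vector spaces $C_\tau$ through $FT_\psi$ and $q_\tau^{\ast}$ turns the displayed identity into the claimed formula. The argument is formal given Lemmas 1.1 and 1.3; the step needing genuine care, and which I regard as the main obstacle, is the interaction of $FT_\psi$ with the coordinate splitting: one must verify that $gh_\tau$ really lands in the coordinate subspace $b_\tau(\mathbb{A}'^{N_\tau})$ and there reproduces the ``$\iota$'' of the face (a short bookkeeping with cone duals and orbit closures), and that $FT_\psi$ of the skyscraper on the complementary space is the constant sheaf with exactly the shift $[N-N_\tau]$ and no Tate twist, so that $FT_\psi\circ b_{\tau!}=q_\tau^{\ast}\circ FT_\psi[N-N_\tau]$ holds on the nose.
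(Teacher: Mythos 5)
Your proposal is correct and follows essentially the same route as the paper: the K-group identity from the orbit stratification of $X(\Sigma(\delta))$ combined with Lemma 1.3, pushforward by the finite morphism $g$, the factorization $g h_\tau = b_\tau\circ\iota_\tau$ through the coordinate subspace, and then $FT_\psi$ together with Lemma 1.1. The only cosmetic difference is that where the paper cites Laumon's Th\'eor\`eme 1.2.2.4 for the identity $FT_\psi(b_{\tau !}M)\cong q_\tau^\ast(FT_\psi(M))[N-N_\tau]$, you rederive it via the external-product compatibility of $FT_\psi$ and the computation $FT_\psi(0_\ast\overline{\mathbb Q}_\ell)\cong\overline{\mathbb Q}_\ell[N-N_\tau]$, which is a correct proof of the same fact.
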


\begin{proof} Let
$\kappa_\tau:\mathbb T_\tau\to X(\Sigma(\delta))$ be the immersion
of $O_{\sigma_\tau}=\mathbb T_\tau$ considered as a subscheme of
$X(\Sigma(\delta))$. By Lemma 1.3, in the $K$-group
$K(D_c^b(X(\Sigma(\delta)),\overline{\mathbb Q}_\ell))$ of
$D_c^b(X(\Sigma(\delta)),\overline{\mathbb Q}_\ell)$, we have
\begin{eqnarray*}
\Big[j_{!\ast}(\mathscr K_\chi[n])\Big]=\Big[j_{!}(\mathscr
K_\chi[n])\Big]+\sum_{\tau\in T}\Big[\kappa_{\tau !}(\mathscr
K_{\chi_\tau}[\mathrm{dim}\, \tau])\otimes x_\tau^\ast(j_{\tau
!\ast}(\overline{\mathbb Q}_\ell [n-\mathrm{dim}\,\tau]))\Big].
\end{eqnarray*} Applying $g_!=Rg_\ast$ to the above equality,
we get that in the $K$-group $K(D_c^b(\mathbb A'^N,\overline{\mathbb
Q}_\ell))$ of $D_c^b(\mathbb A'^N,\overline{\mathbb Q}_\ell)$, we
have
\begin{eqnarray*}
\Big[\iota_{!\ast}(\mathscr K_\chi[n])\Big]= \Big[\iota_{!}(\mathscr
K_\chi[n])\Big]+\sum_{\tau\in T}\Big[(g\kappa_\tau)_!(\mathscr
K_{\chi_\tau}[\mathrm{dim}\, \tau])\otimes x_\tau^\ast(j_{\tau
!\ast}(\overline{\mathbb Q}_\ell [n-\mathrm{dim}\,\tau]))\Big].
\end{eqnarray*}
Applying the Deligne-Fourier transformation to this equality, we get
that in the $K$-group $K(D_c^b(\mathbb A^N,\overline{\mathbb
Q}_\ell))$, we have
\begin{eqnarray*}
\Big[FT_\psi(\iota_{!\ast}(\mathscr K_\chi[n]))\Big]=
\Big[FT_\psi(\iota_{!}(\mathscr K_\chi[n]))\Big]+\sum_{\tau\in
T}\Big[FT_\psi\Big((g\kappa_\tau)_!(\mathscr
K_{\chi_\tau}[\mathrm{dim}\, \tau])\Big)\otimes x_\tau^\ast(j_{\tau
!\ast}(\overline{\mathbb Q}_\ell [n-\mathrm{dim}\,\tau]))\Big].
\end{eqnarray*}
Let $i_\tau:\mathbb A'^{N_\tau}\to\mathbb A'^N$ be the inclusion of
the coordinate plane corresponding to those coordinates $\xi_j$ so
that $\mathbf w_j\in \tau$. It is the closed immersion corresponding
to the $k$-homomorphism
$$k[\xi_1,\ldots, \xi_N]\to k[\xi_j]_{\mathbf w_j\in \tau},\quad
\xi_j\mapsto \left\{\begin{array}{cl}\xi_j&\hbox{if } \mathbf w_j\in
\tau,\\
0&\hbox{if }\mathbf w_j\not\in \tau.\end{array}\right.$$ Let
$\iota_\tau:\mathbb T_\tau \to \mathbb A'^{N_\tau}$ be the morphism
corresponding to the $k$-homomorphism
$$k[\xi_j]_{\mathbf w_j\in \tau} \to k[\mathbb Z^n\cap
\mathrm{span}\,\tau],\quad \xi_j\mapsto \mathbf w_j.$$ One can
verify that
$$g\kappa_\tau=i_\tau \iota_\tau.$$ So we have
\begin{eqnarray*}
FT_\psi\Big((g\kappa_\tau)_!(\mathscr K_{\chi_\tau}[\mathrm{dim}\,
\tau])\Big)&\cong& FT_\psi\Big({i_\tau}_\ast {\iota_\tau}_!(\mathscr
K_{\chi_\tau}[\mathrm{dim}\, \tau])\Big)\\
&\cong& q_\tau^\ast\Big(FT_\psi(\iota_{\tau!}(\mathscr
K_{\chi_\tau}[\mathrm{dim}\, \tau]))\Big)[N-N_\tau],
\end{eqnarray*}
where the second isomorphism follows from \cite[Th\'eor\`eme
1.2.2.4]{L}. We thus have
\begin{eqnarray*}
&&\Big[FT_\psi(\iota_{!\ast}(\mathscr K_\chi[n]))\Big]\\&=&
\Big[FT_\psi(\iota_{!}(\mathscr K_\chi[n]))\Big]+\sum_{\tau\in
T}\Big[q_\tau^\ast\Big(FT_\psi(\iota_{\tau!}(\mathscr
K_{\chi_\tau}[\mathrm{dim}\, \tau]))\Big)[N-N_\tau]\otimes
x_\tau^\ast(j_{\tau !\ast}(\overline{\mathbb Q}_\ell
[n-\mathrm{dim}\,\tau]))\Big].
\end{eqnarray*}
By Lemma 1.1 we have $FT_\psi(\iota_{!}(\mathscr
K_\chi[n]))\cong\mathrm{Hyp}_\psi(\chi)$ and
$FT_\psi(\iota_{\tau!}(\mathscr K_{\chi_\tau}[\mathrm{dim}\,
\tau]))\cong\mathrm{Hyp}_\psi(\chi_\tau)$. Our assertion follows.
\end{proof}

We are now ready to prove Theorem 0.4 (i)-(ii).

\begin{proof}[Proof of Theorem 0.4 (i)-(ii)] (i) can be deduced
from results in \cite{DL}. Let
$$f=\sum_{j=1}^N a_jt_1^{w_{1j}}\cdots t_n^{w_{nj}}\in\bar
k[t_1^{\pm 1},\ldots, t_n^{\pm 1}]$$ be a Laurent polynomial
nondegenerate with respect to $\Delta$. Then by \cite[Proposition
0.1]{Fu}, we have
$$\mathrm{dim}\, H_c^i(\mathbb T^n\otimes_k\bar k, \mathscr
K_\chi\otimes f^\ast \mathscr L_\psi)=\left\{\begin{array}{cl}
0&\hbox {if } i\not=n,\\
n!\mathrm{vol}(\Delta)&\hbox{if } i=n.\end{array}\right.$$ As
explained in \cite{Fu}, this result can be deduced from
\cite[Theorem 1.3]{DL}. (\cite[Theorem 1.3]{DL} follows from from
\cite[Theorem 2.7]{DL}, which is deduced from a theorem of
Bernstein-Kushinirenko-Khovanskii over the complex field by using a
comparison theorem and a specialization argument to reduce
characteristic $p$ case to characteristic $0$ case. Using the method
in \cite{K} and \cite[VII 7.3]{SGA5}, one can prove \cite[Theorem
2.7]{DL} directly without passing to characteristic $0$ case.) So we
have
$$\mathrm{dim}\, R\Gamma_c(\mathbb T^n\otimes_k\bar k, \mathscr
K_\chi\otimes f^\ast \mathscr L_\psi)=(-1)^n
n!\mathrm{vol}(\Delta).$$ Denote the $\bar k$-point $(a_1,\ldots,
a_N)$ in $\mathbb A^N$ by $\mathbf a$. We have
$$(\mathrm{Hyp}_\psi(\chi))_{\mathbf a}\cong  R\Gamma_c(\mathbb T^n\otimes_k\bar k, \mathscr
K_\chi\otimes f^\ast \mathscr L_\psi)[n+N].$$ Hence
$$\mathrm{dim}(\mathrm{Hyp}_\psi(\chi))_{\mathbf a}=(-1)^N
n_!\mathrm{vol}(\Delta)$$ for any point geometric point $\mathbf a$
in $V$. So the rank of $\mathrm{Hyp}_\psi(\chi)$ is
$(-1)^Nn_!\mathrm{vol}(\Delta)$.

(ii) Let $K$ be a mixed complex. (Confer \cite[5.1.5]{BBD} for the
definition of mixed complex). For any $i,w\in\mathbb Z$, let
$^p\mathscr H^i(K)_w$ be the weight $w$ sub-quotient of the weight
filtration for the $i$-th perverse cohomolgy sheaf $^p\mathscr
H^i(K)$. Define
$$P(K)=\sum_{i,w\in\mathbb Z}(-1)^i\mathrm{rank}(^p\mathscr H^i(K)_w)T^w.$$
By Theorem 1.5, we have
\begin{eqnarray*}
P(\mathrm{Hyp}_\psi(\chi))&=&P(FT_\psi(\iota_{!\ast}(\mathscr
K_\chi[n])))-\sum_{\tau\in T}P\Big(q_\tau^\ast
\mathrm{Hyp}_\psi(\chi_\tau)[N-N_\tau]\otimes x_\tau^\ast(j_{\tau
!\ast}\overline{\mathbb Q}_\ell [n-\mathrm{dim}\,\tau])\Big).
\end{eqnarray*}
Since $q_\tau$ is smooth of relative dimension $N-N_\tau$, the
functor $q_\tau^\ast [N-N_\tau]\cong Rq_\tau^!
(-(N-N_\tau))[-(N-N_\tau)]$ is exact with respect to the perverse
t-structure. (Confer \cite[4.2.5]{BBD}). Moreover
$\mathrm{Hyp}_\psi(\chi_\tau)$ is perverse by Theorem 0.3. So we
have
$$^p\mathscr H^i \Big(q_\tau^\ast
\mathrm{Hyp}_\psi(\chi_\tau)[N-N_\tau]\otimes x_\tau^\ast(j_{\tau
!\ast}\overline{\mathbb Q}_\ell [n-\mathrm{dim}\,\tau])\Big)\cong
q_\tau^\ast \mathrm{Hyp}_\psi(\chi_\tau)[N-N_\tau]\otimes \mathscr
H^i\Big(x_\tau^\ast(j_{\tau !\ast}\overline{\mathbb Q}_\ell
[n-\mathrm{dim}\,\tau])\Big).$$ Let $\alpha_i$ be the coefficient of
$T^i$ in the polynomial $\alpha(\mathrm{cone}_\delta^\circ(\tau))$.
By \cite[Theorem 6.2]{DL}, $H^i\Big(x_\tau^\ast(j_{\tau
!\ast}\overline{\mathbb Q}_\ell [n-\mathrm{dim}\,\tau])\Big)$ is
pure of weight $i+n-\mathrm{dim}\,\tau$ and $$\mathrm{dim}
H^i\Big(x_\tau^\ast(j_{\tau !\ast}\overline{\mathbb Q}_\ell
[n-\mathrm{dim}\,\tau])\Big)=\alpha_{i+n-\mathrm{dim}\,\tau}.$$ By
\cite[5.1.14]{BBD}, the functor $q_\tau^\ast [N-N_\tau]$ transforms
perverse sheaves pure of weight $w$ to perverse sheaves pure of
weight $w+N-N_\tau$. Let $\mathrm{Hyp}(\chi_\tau)_w$ be the weight
$w$ subquotient of the weight filtration for the mixed perverse
sheaf $\mathrm{Hyp}(\chi_\tau)$. From the above facts, we get
\begin{eqnarray*}
&&P\Big(q_\tau^\ast \mathrm{Hyp}_\psi(\chi_\tau)[N-N_\tau]\otimes
x_\tau^\ast(j_{\tau !\ast}\overline{\mathbb Q}_\ell
[n-\mathrm{dim}\,\tau])\Big)\\
&=& \sum_{i, w}(-1)^i
(-1)^{N-N\tau}\alpha_{i+n-\mathrm{dim}\,\tau}\mathrm{rank}(\mathrm{Hyp}_\psi(\chi_\tau)_w)
T^{w+N-N_\tau+i+n-\mathrm{dim}\,\tau}\\
&=& (-1)^{n-\mathrm{dim}\,\tau+N-N_\tau}  T^{N-N_\tau}\Big(\sum_w
\mathrm{rank}(\mathrm{Hyp}_\psi(\chi_\tau)_w)T^w\Big) \Big(\sum_i
(-1)^{i+n-\mathrm{dim}\,\tau}\alpha_{i+n-\mathrm{dim}\,\tau}
T^{i+n-\mathrm{dim}\,\tau} \Big)\\
&=&(-1)^{n-\mathrm{dim}\,\tau+N-N_\tau}  T^{N-N_\tau}
P(\mathrm{Hyp}_\psi(\chi_\tau))\alpha(\mathrm{cone}^\circ_\delta(\tau)).
\end{eqnarray*}
Here for the last equality, we use the fact
$\alpha(\mathrm{cone}^\circ_\delta(\tau))(-T)=\alpha(\mathrm{cone}^\circ_\delta(\tau))(T)$
since $\alpha(\mathrm{cone}^\circ_\delta(\tau))$ involves only even
powers of $T$. Since $\iota_{!\ast}(\mathscr K_\chi[n])$ is a pure
perverse sheaf of weight $n$, its Deligne-Fourier transform
$FT_\psi(\iota_{!\ast}(\mathscr K_\chi[n]))$ is a pure perverse
sheaf of weight $n+N$. So we have
$$P(FT_\psi(\iota_{!\ast}(\mathscr
K_\chi[n])))=bT^{n+N}$$ for some integer $b$. We thus have
\begin{eqnarray*}
P(\mathrm{Hyp}_\psi(\chi))=bT^{n+N}-\sum_{\tau\in
T}(-1)^{n-\mathrm{dim}\,\tau+N-N_\tau} T^{N-N_\tau}
P(\mathrm{Hyp}_\psi(\chi_\tau))
\alpha(\mathrm{cone}^\circ_\delta(\tau)).
\end{eqnarray*}
Evaluating this equation at $T=1$, and using the fact
\begin{eqnarray*}
P(\mathrm{Hyp}_\psi(\chi))(1)&=&(-1)^N n!\mathrm{vol}(\Delta),
\\
P(\mathrm{Hyp}_\psi(\chi_\tau))(1)&=&(-1)^{N_\tau}
(\mathrm{dim}\,\tau)!\mathrm{vol}(\Delta\cap \tau)
\end{eqnarray*}
which is deduced from (i), we get
$$b=(-1)^N n!\mathrm{vol}(\Delta)+\sum_{\tau\in
T}(-1)^{n-\mathrm{dim}\,\tau+N}
(\mathrm{dim}\,\tau)!\mathrm{vol}(\Delta\cap \tau)
\alpha(\mathrm{cone}^\circ_\delta(\tau))(1),$$ that is, we have
$b=e(\Delta,\chi)$. So we have
\begin{eqnarray*}
P(\mathrm{Hyp}_\psi(\chi))=e(\Delta,\chi) T^{n+N}-\sum_{\tau\in
T}(-1)^{n-\mathrm{dim}\,\tau+N-N_\tau} T^{N-N_\tau}
P(\mathrm{Hyp}_\psi(\chi_\tau))
\alpha(\mathrm{cone}^\circ_\delta(\tau)).
\end{eqnarray*}
By this expression, the definition of $E(\Delta,\chi)$, and
induction on $\mathrm{dim}\,\Delta$, we get
$$P(\mathrm{Hyp}_\psi(\chi))=E(\Delta,\chi).$$
Since $\mathrm{Hyp}_\psi(\chi)$ is a mixed perverse sheaf of weight
$\leq n+N$, the degree of $P(\mathrm{Hyp}_\psi(\chi))$ is at most
$n+N$. Similarly, the degree of $P(\mathrm{Hyp}_\psi(\chi_\tau))$ is
at most $\mathrm{dim}\,\tau+N_\tau$. By the definition of $\alpha$,
we have
$$\mathrm{deg}(\alpha(\mathrm{cone}^\circ_\delta(\tau)))\leq
\mathrm{dim}(\mathrm{cone}^\circ_\delta(\tau))-1=n-\mathrm{dim}\,\tau-1.$$
It follows from the last expression of $P(\mathrm{Hyp}_\psi(\chi))$
that $e_{n+N}=e(\Delta, \chi)$.
\end{proof}

Finally we prove Theorem 0.5.

\begin{proof}[Proof of Theorem 0.5] Theorem 0.5 (i) is Lemma
1.1. Let's prove (ii). In the Abelian category of perverse sheaves,
we have an epimorphism
$$j_!\mathcal K_\chi[n]\twoheadrightarrow j_{!\ast}(\mathcal K_\chi[n]).$$ Let
$\mathcal K$ be the kernel of this epimorphism. We have a
distinguished triangle
$$j_!\mathcal K_\chi[n]\to j_{!\ast}(\mathcal K_\chi[n])\to \mathcal
K[1]\to.$$ On the complement $X(\Sigma(\delta))-\mathbb T^n$ of the
open dense torus, we have $(j_!\mathcal
K_\chi[n])|_{X(\Sigma(\delta))-\mathbb T^n}=0.$ It follows that
$$\mathcal K|_{X(\Sigma(\delta))-\mathbb T^n}\cong
\Big(j_{!\ast}(\mathcal
K_\chi[n])[-1]\Big)|_{X(\Sigma(\delta))-\mathbb T^n}.$$ But
$j_{!\ast}(\mathcal K_\chi[n])$ is a pure complex of weight $n$. So
$\Big(j_{!\ast}(\mathcal
K_\chi[n])[-1]\Big)|_{X(\Sigma(\delta))-\mathbb T^n}$ is mixed of
weight $\leq n-1$. Hence $\mathcal K|_{X(\Sigma(\delta))-\mathbb
T^n}$ is mixed of weight $\leq n-1$. On the other hand, we have
$\mathcal K|_{\mathbb T^n}=0$. So $\mathcal K$ is mixed of weight
$\leq n-1$. Hence $j_{!\ast}(\mathcal K_\chi[n])$ is the weight $n$
subquotient of $j_!\mathcal K_\chi[n]$. Since $\iota=gj$ and $g$ is
a finite morphism, $\iota_{!\ast}(\mathcal K_\chi[n])$ is the weight
$n$ subquotient of $\iota_!\mathcal K_\chi[n]$. Taking the
Deligne-Fourier transformation, we get that
$FT_\psi(\iota_{!\ast}(\mathcal K_\chi[n]))$ is the weight $n+N$
subquotient of $\mathrm{Hyp}_\psi(\chi)\cong FT_\psi(\iota_!\mathcal
K_\chi[n])$ by \cite[Th\'eor\`eme 2.2.1]{KL}. The rank of
$FT_\psi(\iota_{!\ast}(\mathcal K_\chi[n]))$ is $e(\Delta, \chi)$ by
the proof of Theorem 0.4.
\end{proof}

\section{The lisse locus of the GKZ hypergeometric sheaf}

In this section, we prove Theorem 0.4 (iii), that is, for each $i$,
$\mathscr H^i(\mathrm{Hyp}_\psi(\chi))$ is lisse on the open subset
$V$ of $\mathbb A^N$ parametrizing Laurent polynomials nondegenerate
with respect to $\Delta$. Let $[q-1]:\mathbb T^n\to \mathbb T^n$ be
the morphism
$$(t_1,\ldots, t_n)\mapsto (t_1^{q-1}, \ldots, t_n^{q-1}).$$
Then $\mathscr K_\chi$ is a direct factor of
$[q-1]_\ast\overline{\mathbb Q}_\ell$. It follows that
$\mathrm{Hyp}_\psi(\chi)=R\pi_{2!}(\pi_1^\ast\mathscr K_\chi\otimes
F^\ast \mathscr L_\psi)[n+N]$ is a direct factor of
$R\pi_{2!}(\pi_1^\ast[q-1]_\ast \overline{\mathbb Q}_\ell\otimes
F^\ast \mathscr L_\psi)[n+N]$. Fix notation by the following
commutative diagram:
$$\begin{array}{rcrlc} \mathbb T^n&\stackrel{\pi_1}\leftarrow&
\mathbb T^n\times \mathbb
A^N&&\\
{\scriptstyle [q-1]}\downarrow&&{\scriptstyle [q-1]\times\mathrm{id}}\downarrow&\searrow{\scriptstyle \pi_2}&\\
\mathbb T^n&\stackrel{\pi_1}\leftarrow& \mathbb T^n\times \mathbb
A^N&\stackrel{\pi_2}\to&\mathbb A^N.\\
&&{\scriptstyle F}\downarrow &&\\ &&\mathbb A^1&&
\end{array}$$
We have
\begin{eqnarray*}
R\pi_{2!}(\pi_1^\ast[q-1]_\ast \overline{\mathbb Q}_\ell\otimes
F^\ast \mathscr L_\psi)&\cong
&R\pi_{2!}(([q-1]\times\mathrm{id})_\ast\overline{\mathbb
Q}_\ell\otimes F^\ast \mathscr L_\psi)\\
&\cong&R\pi_{2!}([q-1]\times\mathrm{id})_\ast
([q-1]\times\mathrm{id})^\ast F^\ast \mathscr L_\psi\\
&\cong& R\pi_{2 !} (F\circ([q-1]\times\mathrm{id}))^\ast \mathscr
L_\psi.
\end{eqnarray*}
Note that $$F\circ([q-1]\times\mathrm{id})(t_1,\ldots,
t_n,x_1,\ldots, x_N)=\sum_{j=1}^N x_j(t_1^{w_{1j}}\cdots
t_n^{w_{nj}})^{q-1}.$$ So $R\pi_{2!}(\pi_1^\ast[q-1]_\ast
\overline{\mathbb Q}_\ell\otimes F^\ast \mathscr L_\psi)[n+N]$ is
isomorphic to the GKZ hypergeometric sheaf $\mathrm{Hyp}_\psi(1)$
associated to the trivial character $1:
(k^\ast)^n\to\overline{\mathbb Q}_\ell^\ast$ and the vectors
$(q-1)\mathbf w_j$ $(j=1,\ldots,N)$. Moreover, for any rational
point $(a_1,\ldots, a_N)$ of $V$, the Laurent polynomial
$\sum_{j=1}^Na_j (t_1^{w_{1j}}\cdots t_n^{w_{nj}})^{q-1}$ is
non-degenerate with respect to $(q-1)\Delta$. As
$\mathrm{Hyp}_\psi(\chi)$ is a direct factor of
$\mathrm{Hyp}_\psi(1)$, to show $\mathrm{Hyp}_\psi(\chi)$ is lisse
on $V$, it suffices to show $\mathrm{Hyp}_\psi(1)$ is lisse on $V$.
We are thus reduced to the case where $\chi=1$. For simplicity of
notation, we still work with the vectors $\mathbf w_j$ instead of
$(q-1)\mathbf w_j$.

\medskip
Let $\Delta$ be a rational convex polytope of dimension $n$ in
$\mathbb R^n$. Then the set
$$\Sigma(\Delta)=\{(\mathrm{cone}_\Delta(\Gamma))^\vee|\Gamma\prec \Delta\}$$ is a
fan, where for any face $\Gamma$ of $\Delta$,
$\mathrm{cone}_\Delta(\Gamma)$ is the cone in  $\mathbb R^n$
generated by $u'-u$ ($u'\in \Delta$, $u\in \Gamma$), and
$(\mathrm{cone}_\Delta(\Gamma))^\vee$ is its dual. The toric variety
$X(\Sigma(\Delta))$ is proper over $k$, and it has an open covering
$\mathrm{Spec}\, k[\mathbb Z^n\cap \mathrm{cone}_\Delta(\Gamma)]$
$(\Gamma\prec\Delta)$. Let $\bar j:\mathbb T^n\hookrightarrow
X(\Sigma(\Delta))$ be the immersion of the open dense torus, and let
$\bar\pi_2:X(\Sigma(\Delta))\times\mathbb A^N\to\mathbb A^N$ be the
projection. We have a commutative diagram
$$\begin{array}{ccrlc}
&&\mathbb A^1&&\\
&&{\scriptstyle F}\uparrow&&\\
\mathbb T^n&\stackrel{\pi_1}\leftarrow& \mathbb T^n\times \mathbb
A^N&\stackrel{\bar j\times \mathrm{id}}\hookrightarrow &
X(\Sigma(\Delta))\times\mathbb A^N.\\
&&{\scriptstyle \pi_2}\downarrow&\swarrow{\scriptstyle \bar\pi_2}&\\
&&\mathbb A^N&&
\end{array}$$
Since $\bar\pi_2$ is proper, we have
\begin{eqnarray*}
\mathrm{Hyp}_\psi(1)&=&R\pi_{2!}
F^\ast \mathscr L_\psi[n+N]\\
&\cong& R\bar \pi_{2\ast}(\bar j\times \mathrm{id})_!F^\ast \mathscr
L_\psi[n+N].
\end{eqnarray*}
That $\mathrm{Hyp}_\psi(1)$ is lisse on $V$ follows directly from
the following proposition. (Confer \cite[Finitude A 2]{SGA 4 1/2}).

\begin{proposition} Over the open subset $V$ of $\mathbb A^N$, $(\bar j\times
\mathrm{id})_!F^\ast \mathscr L_\psi$ is universally locally acyclic
relative to $\bar \pi_2$.
\end{proposition}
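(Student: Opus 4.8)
The plan is to reduce the local acyclicity statement to a fibrewise vanishing-cycles statement, and then to exploit the toric structure of $X(\Sigma(\Delta))$ together with the nondegeneracy hypothesis on the Laurent polynomials parametrized by $V$. First I would recall (following \cite[Finitude, Th.\ 2.13]{SGA 4 1/2}) that to prove $(\bar j\times\mathrm{id})_!F^\ast\mathscr L_\psi$ is locally acyclic relative to $\bar\pi_2$ over $V$, it suffices to show that for every geometric point $\bar a$ of $V$, every geometric point $\bar z$ of the fibre $\bar\pi_2^{-1}(\bar a)=X(\Sigma(\Delta))_{\bar a}$, and every generization $\bar t$ of $\bar a$ in $V$, the vanishing cycle complex $R\Phi$ of $(\bar j\times\mathrm{id})_!F^\ast\mathscr L_\psi$ at $\bar z$ with respect to the specialization $\bar t\rightsquigarrow\bar a$ vanishes. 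Since $V$ is smooth (indeed open in affine space), it suffices to check local acyclicity, which is a condition purely on the fibres, so I would reformulate this as: for each geometric point $\bar a\in V$, the complex $(\bar j\times\mathrm{id})_!F^\ast\mathscr L_\psi$ restricted to a neighbourhood of the fibre over $\bar a$ is universally locally acyclic, and this I would verify by analyzing the sheaf on the toric variety near each stratum $O_{\sigma}\subset X(\Sigma(\Delta))$.

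The key geometric input is the following: $X(\Sigma(\Delta))$ decomposes into torus orbits $O_\sigma$, one for each cone $\sigma\in\Sigma(\Delta)$, equivalently for each face $\Gamma\prec\Delta$ via $\sigma=(\mathrm{cone}_\Delta(\Gamma))^\vee$. On the open orbit $\mathbb T^n$ the sheaf $(\bar j\times\mathrm{id})_!F^\ast\mathscr L_\psi$ is just $F^\ast\mathscr L_\psi$, and there local acyclicity relative to $\bar\pi_2$ over $V$ is exactly the statement that the Artin--Schreier pullback of the family of Laurent polynomials $f_{\bar a}=\sum a_j t_1^{w_{1j}}\cdots t_n^{w_{nj}}$ has no vanishing cycles — this is a well-known consequence of the nondegeneracy of $f_{\bar a}$ with respect to $\Delta$ (the critical locus of $f_{\bar a}$ on $\mathbb T^n$, and on each face stratum, is empty). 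Along a boundary orbit $O_\sigma\cong\mathbb T^{n-\dim\sigma}$ the extension by zero $(\bar j\times\mathrm{id})_!$ kills the sheaf on $O_\sigma$ itself, but one must control the behaviour transverse to $O_\sigma$: in a neighbourhood of a point of $O_\sigma$, using toric coordinates one has $U_\sigma\cong\mathbb A^{\dim\sigma}\times\mathbb T^{n-\dim\sigma}$ (after shrinking), and the function $F$ restricted there, in the new coordinates, is a polynomial whose "boundary part'' $F_\Gamma$ governs the leading behaviour. The nondegeneracy of $f_{\bar a}$ with respect to $\Delta$ says precisely that for each face $\Gamma$ of $\Delta$ not containing the origin, $(f_{\bar a})_\Gamma$ has no critical point on the torus; this is exactly the condition needed so that the Artin--Schreier sheaf $F^\ast\mathscr L_\psi$, extended by zero across the toric boundary, is locally acyclic along $O_\sigma$ relative to $\bar\pi_2$. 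I would make this precise by a local computation in toric coordinates combined with the criterion that $F^\ast\mathscr L_\psi$ is locally acyclic relative to a morphism $h$ whenever the restriction of $F$ to each fibre of $h$ has isolated or empty critical locus in the appropriate stratified sense (cf.\ the arguments of Denef--Loeser \cite{DL} and the treatment in \cite{Fu}).

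Concretely, the steps I would carry out are: (1) reduce to checking the local acyclicity fibre by fibre over geometric points $\bar a\in V$; (2) stratify $X(\Sigma(\Delta))\times\{\bar a\}$ by the orbits $O_\sigma\times\{\bar a\}$ and treat each separately; (3) on the open orbit, invoke that nondegeneracy of $f_{\bar a}$ implies $F^\ast\mathscr L_\psi$ has no vanishing cycles relative to $\bar\pi_2$ (the argument of \cite[Theorem 1.3]{DL}, reproved in \cite{Fu}); (4) on a boundary orbit $O_{\sigma}$ with associated face $\Gamma$, pass to affine toric coordinates on $U_\sigma$, write $F$ in these coordinates, and observe that the "leading part'' along $O_\sigma$ is $(f_{\bar a})_\Gamma$ up to a monomial unit; since $\Gamma$ either contains the origin (in which case $O_\sigma$ lies over the torus part and is handled as the interior case after a further stratification by the smaller face $\Gamma_0$) or does not, in which case nondegeneracy gives the required vanishing; (5) conclude by the criterion of \cite[Finitude]{SGA 4 1/2} that $(\bar j\times\mathrm{id})_!F^\ast\mathscr L_\psi$ is locally acyclic relative to $\bar\pi_2$ over $V$. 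The main obstacle, and the part requiring the most care, is step (4): one must correctly identify, in toric coordinates near a boundary orbit, which "part'' of the Laurent polynomial governs the vanishing-cycle behaviour of the extension-by-zero, and verify that the empty-critical-locus condition on $(f_{\bar a})_\Gamma$ from the definition of nondegeneracy is exactly what kills the vanishing cycles there — this matching of the combinatorial nondegeneracy condition with the geometric local acyclicity condition along every toric stratum is the crux of the proof.
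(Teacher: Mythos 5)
Your high-level plan---stratify $X(\Sigma(\Delta))$ by torus orbits, treat the orbits attached to faces containing $0$ separately from those attached to faces $\Gamma$ with $0\notin\Gamma$, and feed the nondegeneracy hypothesis into the latter---is the same as the paper's. But there is a genuine gap exactly where you write that ``the main obstacle\ldots is step (4)'': the local analysis at a boundary orbit $O_{\sigma_\Gamma}$ with $0\notin\Gamma$ \emph{is} the content of the proposition, and you do not carry it out. The ``criterion'' you invoke (local acyclicity of $F^\ast\mathscr L_\psi$ relative to $h$ whenever $F$ has empty critical locus on the fibres of $h$ in a stratified sense) is not a citable statement and is not how the paper proceeds. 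The paper proves the stronger assertion that the pair $\bigl((\bar j\times\mathrm{id})_!F^\ast\mathscr L_\psi,\ \bar\pi_2\bigr)$ is \emph{locally constant} over $V$---\'etale-locally the base change of a fixed pair $(\mathscr L,X)$ on a $k$-scheme---and then quotes \cite[Finitude 2.16]{SGA 4 1/2}. Producing these \'etale-local trivializations near a point $(P,\mathbf a)$ with $P\in O_{\sigma_\Gamma}$ forces a case division your sketch does not anticipate: nondegeneracy only guarantees that $t_1^{-w_{1j_0}}\cdots t_n^{-w_{nj_0}}F_\Gamma$ and its derivatives along $\mathrm{span}(\Gamma-\Gamma)$ do not all vanish at $(P,\mathbf a)$. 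When the value vanishes and some derivative does not, one can take $F$ (normalized by a monomial) as a new \'etale coordinate and the pair becomes visibly constant; but when $F_\Gamma(P,\mathbf a)\neq 0$ this fails, and the paper instead adjoins a $d$-th root of $t_1^{-w_{1j_0}}\cdots t_n^{-w_{nj_0}}F$ (with $d$ the value of the supporting linear form on $\Gamma$, assumed prime to $p$) to untwist the monomial grading before trivializing. Without both constructions the argument does not close, and no general vanishing-cycles criterion is available to substitute for them.

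A secondary point: you assert that on the open orbit $\mathbb T^n\times V$ local acyclicity ``is exactly the statement that'' the family has no vanishing cycles, ``a consequence of nondegeneracy,'' citing \cite[Theorem 1.3]{DL}. This is misplaced. Over the open orbit $\bar\pi_2$ is smooth and $F^\ast\mathscr L_\psi$ is lisse, so local acyclicity there is automatic and uses no hypothesis on $(a_1,\ldots,a_N)$; \cite[Theorem 1.3]{DL} concerns the cohomology of a single fibre, not local acyclicity in the family. Nondegeneracy enters only along the toric boundary. Likewise, local acyclicity is not ``a condition purely on the fibres''---it compares nearby fibres---though this slip does not by itself affect the structure of the argument.
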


\begin{proof} As in \cite[3.7.3]{D}, we prove that over $V$, the pair
$\Big((\bar j\times \mathrm{id})_!F^\ast \mathscr L_\psi, \bar
\pi_2\Big)$ is locally constant, that is, locally with respect to
the \'etale topology, it is isomorphic to the base change $(p_1^\ast
\mathscr L, p_2)$ to $\mathbb A^N$ of a pair $(\mathscr L,X)$, where
$X$ is a $k$-scheme, $\mathscr L$ is a $\overline{\mathbb
Q}_\ell$-sheaf on $X$, $p_1:X\times_k\mathbb A^N\to X$ and
$p_2:X\times_k\mathbb A^N\to\mathbb A^N$ are projections. Our
assertion then follows from \cite[Finitude 2.16]{SGA 4 1/2}.

Let $\Gamma$ be a face of $\Delta$, and let
$\sigma_\Gamma=(\mathrm{cone}_\Delta(\Gamma))^\vee$ be the cone in
$\Sigma(\Delta)$ corresponding to $\Gamma$. We have
$$\sigma_\Gamma^\perp=\mathrm{span}(\Gamma-\Gamma),$$ where $\Gamma-\Gamma=
\{u'-u|u,u'\in\Gamma\}$.  We have a closed immersion
$$O_{\sigma_\Gamma}=\mathrm{Spec}\, k[\mathbb Z^n\cap\mathrm{span}(\Gamma-\Gamma)]\to
U_{\sigma_\Gamma}=\mathrm{Spec}\, k[\mathbb Z^n\cap
\mathrm{cone}_\Delta(\Gamma)]$$ induced by the $k$-epimorphism
$$k[\mathbb Z^n\cap\mathrm{cone}_\Delta(\Gamma)]\to k[\mathbb Z^n\cap
\mathrm{span}(\Gamma-\Gamma)], \quad u\mapsto
\left\{\begin{array}{cl} u&\hbox{if } u\in
\mathrm{span}(\Gamma-\Gamma)\\
0&\hbox{if
}u\not\in\mathrm{span}(\Gamma-\Gamma)\end{array}\right.\hbox{for any
} u\in \mathbb Z^n\cap \mathrm{cone}_\Delta(\Gamma).$$ Regard
$O_{\sigma_\Gamma}$ as a subscheme of $X(\Sigma)$. We have
$X(\Sigma(\Delta))=\coprod_{\Gamma\prec\Delta} O_{\sigma_{\Gamma}}$.

First consider the case where $\Gamma$ is a face of $\Delta$
containing $0$. Then we have $\mathbf
w_j\in\mathrm{cone}_\Delta(\Gamma)$ for all $j=1,\ldots, N$. So the
polynomial $\sum_{j=1}^N x_jt_1^{w_{1j}}\cdots t_n^{w_{nj}}$ can be
regarded as an element in
$$\Gamma(U_{\sigma_\Gamma}\times\mathbb A^N,\mathscr
O_{U_{\sigma_\Gamma}\times\mathbb A^N})\cong k[\mathbb Z^n\cap
\mathrm{cone}_{\Delta}(\Gamma)][x_1,\ldots,x_N].$$ Hence the
morphism $F:\mathbb T^n\times \mathbb A^N\to\mathbb A^1$ can be
extended to a morphism $\overline F:U_{\sigma_\Gamma}\times\mathbb
A^N\to \mathbb A^1$. The sheaf $\overline F^\ast \mathscr L_\psi$ is
locally constant on $U_{\sigma_\Gamma}\times \mathbb A^N$. Let
$j:\mathbb T^n\to U_{\sigma_\Gamma}$ be the immersion of the open
dense torus. Then the restriction of the pair $\Big((\bar j\times
\mathrm{id})_!F^\ast \mathscr L_\psi, \bar \pi_2\Big)$ to
$U_{\sigma_\Gamma}\times\mathbb A^N$ is isomorphic to the pair
$$\Big((j\times\mathrm{id})_!(j\times\mathrm{id})^\ast
\overline F^\ast \mathscr L_\psi, U_{\sigma_\Gamma}\times\mathbb
A^N\to\mathbb A^N\Big),$$ and locally with respect to the \'etale
topology, the second pair is isomorphic to the base change to
$\mathbb A^N$ of the pair $(j_!\overline{\mathbb Q}_\ell,
U_{\sigma_\Gamma})$.

Now suppose $\Gamma$ is a face of $\Delta$ not containing $0$, and
let $(P, \mathbf a):\mathrm{Spec}\,\bar k\to X(\Sigma(\Delta))\times
V$ be a $\bar k$-point of $X(\Sigma(\Delta))\times V$, where
$P:\mathrm{Spec}\,\bar k\to X(\Sigma(\Delta))$ is a $\bar k$-point
of $X(\Sigma(\Delta))$ lying in $O_{\sigma_\Gamma}$, and $\mathbf a$
is a $\bar k$-point of $\mathbb A^N$ with coordinate $(a_1,\ldots,
a_N)$ lying in $V$. Let's prove the pair $\Big(\bar \pi_2,(\bar
j\times \mathrm{id})_!F^\ast \mathscr L_\psi\Big)$ is locally
constant near the point $(P,\mathbf a)$. As
$X(\Sigma(\Delta))=\coprod_{\Gamma\prec\Delta} O_{\sigma_{\Gamma}}$,
this will finish the proof of our assertion.

We first make some simplification. Choose a codimension 1 face
$\bar\Gamma$ of $\Delta$ not containing the origin but containing
$\Gamma$, and choose relatively prime integers $d_1,\ldots, d_n$ so
that the restriction to $\Delta$ of the linear function
$$\phi(v_1,\ldots, v_n)=d_1v_1+\cdots +d_nv_n$$ takes its minimum
$d$ exactly on the codimension one face $\bar\Gamma$. By our
assumption, $p$ does not divide $d$, and we have
$$\phi|_{\Delta}\geq d,\quad \Gamma\subset \{\mathbf v\in
\Delta|\phi(\mathbf v)=d\}.$$ Since $d_1,\ldots, d_n$ are relatively
prime, the abelian group $\mathbb Z^n/\mathbb Z(d_1,\ldots, d_n)$ is
torsion free. Indeed, there exist integers $a_1,\ldots,
a_n\in\mathbb Z$ such that
$$\sum_{i=1}^n a_i d_i=1.$$ If $(x_1,\ldots,
x_n)\in\mathbb Z^n$ and $k(x_1,\ldots, x_n)=m(d_1,\ldots, d_n)$ for
some integers $k\not=0$ and $m$, then $k$ divides $md_1,\ldots,
md_n$ and hence $k$ divides $m=\sum_{i=1}^n ma_id_i$. It follows
that $(x_1,\ldots, x_n)=\frac{m}{k}(d_1,\ldots, d_n)$ lies in the
subgroup of $\mathbb Z^n$ generated by $(d_1,\ldots, d_n)$. So
$\mathbb Z^n/\mathbb Z(d_1,\ldots, d_n)$ is torsion free, and hence
free. It follows that $\mathbb Z(d_1,\ldots, d_n)$ is a direct
factor of $\mathbb Z^n$. Hence $\phi$ is part of a basis of
$\mathrm{Hom}(\mathbb Z^n, \mathbb Z)$. Note that
$\phi|_{\mathrm{span}(\Gamma-\Gamma)}=0$. Choose a basis
$\{\phi_1,\ldots, \phi_n\}$ of $\mathrm{Hom}(\mathbb Z^n,\mathbb Z)$
so that $\phi_n=\phi$ and that $\{\phi_{m+1},\ldots, \phi_n\}$ form
a basis of $\mathrm{span}(\Gamma-\Gamma)^\perp$
$(m=\mathrm{dim}\,\Gamma)$. Let $\{e_1,\ldots, e_n\}$ be the dual
basis of $\{\phi_1,\ldots, \phi_n\}$. We have
$$\mathrm{span}(\Gamma-\Gamma)=\mathrm{span}\{e_1,\ldots, e_m\}.$$
Expand elements in $\mathbb Z^n$ with respect to this basis and
denote $w_1e_1+\cdots +w_ne_n$ by $\mathbf w=(w_1,\ldots, w_n)$. For
any $\mathbf w\in \Delta$ (resp. $\mathbf w\in \Gamma$), we have
$w_n\geq d$ (resp. $w_n=d$). Moreover, for any $m+1\leq i\leq n$,
elements in $\Gamma$ have a common $i$-th coordinate.

Let $\mathrm{cone}^\circ_\Delta(\Gamma)$ be the image of
$\mathrm{cone}_\Delta(\Gamma)$ under the projection $\mathbb R^n\to
\mathrm{span}\{e_{m+1}, \ldots, e_n\}$. We have
\begin{eqnarray*}
\mathrm{cone}_\Delta(\Gamma)&=&\mathrm{span}(\Gamma-\Gamma)\oplus
\mathrm{cone}^\circ_\Delta(\Gamma),\\
k[\mathbb Z^n\cap \mathrm{cone}_\Delta(\Gamma)]&\cong& k[\mathbb
Z^{n-m}\cap \mathrm{cone}^\circ_\Delta(\Gamma)][t_1^{\pm 1},\ldots,
t_m^{\pm 1}].
\end{eqnarray*}
Since the $\bar k$-point $P$ of $X(\Sigma(\Delta))$ lies in
$O_{\sigma_\Gamma}$, $P$ corresponds to a $k$-homomorphism
$$P^\natural: k[\mathbb Z^n\cap \mathrm{cone}_\Delta(\Gamma)]\to \bar k$$
such that for any $u\in\mathbb Z^n\cap
\mathrm{cone}_\Delta(\Gamma)$, we have $P^\natural(u)=0$ if $u\not
\in \mathrm{span}(\Gamma-\Gamma)$, and $P^\natural(u)$ is a nonzero
element in $k^\ast$ if $u\in\mathrm{span}(\Gamma-\Gamma)$. Fix
$j_0\in \{1,\ldots,N\}$ so that $\mathbf w_{j_0}=(w_{1j_0},\ldots,
w_{nj_0})$ lies in $\Gamma$. Write
$$t_1^{-w_{1j_0}}\cdots t_n^{-w_{nj_0}}F=\sum_{j=1}^N
x_jt_1^{i_{1j}}\cdots t_n^{i_{nj}},$$ where $(i_{1j},\ldots,
i_{nj})=(w_{1j},\ldots, w_{nj})-(w_{1j_0},\ldots, w_{nj_0})$. This
is an element in $k[\mathbb Z^n\cap
\mathrm{cone}_\Delta(\Gamma)][x_1, \ldots, x_N]$. Set
$F_\Gamma=\sum_{\mathbf w_j\in\Gamma} x_j t_1^{w_{1j}} \cdots
t_n^{w_{nj}}$. Then since elements in $\Gamma$ have a common $i$-th
coordinate for any $m+1\leq i\leq n$, the polynomial
$t_1^{-w_{1j_0}}\cdots t_n^{-w_{nj_0}}F_\Gamma$ is a sum of
monomials involving only $t_1,\ldots, t_m$, that is,
$$t_1^{-w_{1j_0}}\cdots t_n^{-w_{nj_0}}F_\Gamma=\sum_{\mathbf w_j\in\Gamma} x_j t_1^{i_{1j}}
\cdots t_m^{i_{mj}}.$$ Since $f=\sum_{j=1}^N a_jt_1^{w_{1j}}\cdots
t_n^{w_{nj}}$ is nondegenerate with respect to $\Delta$,
$$\frac{\partial f_\Gamma}{\partial t_1}=\cdots=\frac{\partial f_\Gamma}{\partial
t_n}=0$$ has no solution in $(k^\ast)^n$. This implies that
$$t_1^{-w_{1j_0}}\cdots t_n^{-w_{nj_0}}f_\Gamma=
\frac{\partial}{\partial t_1}(t_1^{-w_{1j_0}}\cdots
t_n^{-w_{nj_0}}f_\Gamma)=\cdots=\frac{\partial}{\partial
t_m}(t_1^{-w_{1j_0}}\cdots t_n^{-w_{nj_0}}f_\Gamma)=0$$ has no
solution in $(k^\ast)^n$. In particular, not all
$$(t_1^{-w_{1j_0}}\cdots t_n^{-w_{nj_0}}F_\Gamma)(P,\mathbf a),
\Big(\frac{\partial}{\partial t_1}(t_1^{-w_{1j_0}}\cdots
t_n^{-w_{nj_0}}F_\Gamma)\Big)(P,\mathbf a),\ldots,
\Big(\frac{\partial}{\partial t_m}(t_1^{-w_{1j_0}}\cdots
t_n^{-w_{nj_0}}F_\Gamma)\Big)(P,\mathbf a)$$ are zero.

\emph{Case 1.} $(t_1^{-w_{1j_0}}\cdots
t_n^{-w_{nj_0}}F_\Gamma)(P,\mathbf a)=0$.

Then there exists $i\in\{1,\ldots, m\}$ such that
$\Big(\frac{\partial}{\partial t_i}(t_1^{-w_{1j_0}}\cdots
t_n^{-w_{nj_0}}F_\Gamma)\Big)(P,\mathbf a)\not=0$. Without loss of
generality, suppose $\Big(\frac{\partial}{\partial
t_m}(t_1^{-w_{1j_0}}\cdots t_n^{-w_{nj_0}}F_\Gamma)\Big)(P,\mathbf
a)\not=0$. Consider the $k$-homomorphism
\begin{eqnarray*}
\varphi^\natural: k[\mathbb Z^{n-m}\cap
\mathrm{cone}^\circ_\Delta(\Gamma)][t_1^{\pm 1},\ldots, t_{m-1}^{\pm
1},t_m, x_1, \ldots, x_N]&\to& k[\mathbb Z^n\cap
\mathrm{cone}_\Delta(\Gamma)][x_1, \ldots, x_N]\\
&=& k[\mathbb Z^{n-m}\cap
\mathrm{cone}^\circ_\Delta(\Gamma)][t_1^{\pm 1},\ldots, t_{m}^{\pm
1}, x_1, \ldots, x_N]
\end{eqnarray*}
defined by
\begin{eqnarray*}
\qquad t_m&\mapsto& (t_1^{-w_{1j_0}}\cdots t_n^{-w_{nj_0}}F) t_1^{w_{1j_0}}\cdots t_m^{w_{mj_0}}, \\
g&\mapsto& g \quad \hbox {if } g\in k[\mathbb Z^{n-m}\cap
\mathrm{cone}^\circ_\Delta(\Gamma)][t_1^{\pm 1},\ldots,t_{m-1}^{\pm
1}, x_1, \ldots, x_N].
\end{eqnarray*}
It induces a $k$-morphism
$$\varphi: U_{\sigma_\Gamma}\times\mathbb A^N\to\Big(\mathrm{Spec}\,k[\mathbb Z^{n-m}\cap
\mathrm{cone}^\circ_\Delta(\Gamma)][t_1^{\pm 1},\ldots, t_{m-1}^{\pm
1},t_m]\Big)\times \mathbb A^N.$$ Note that if $\mathbf
w_j\in\Delta\backslash \Gamma$, then we have $\mathbf w_j-\mathbf
w_{j_0}\in\mathrm{cone}_\Delta(\Gamma)\backslash
\mathrm{span}(\Gamma-\Gamma)$ and hence $P^\natural (\mathbf
w_j-\mathbf w_{j_0})=0$. It follows that
\begin{eqnarray*}
(t_1^{w_{1j}-w_{1j_0}}\cdots t_n^{w_{nj}-w_{nj_0}})(P,\mathbf a)
&=& 0\\
\Big({t_{m}}\frac{\partial}{\partial
t_{m}}(t_1^{w_{1j}-w_{1j_0}}\cdots
t_n^{w_{nj}-w_{nj_0}})\Big)(P,\mathbf a)&=& \Big((w_{mj}-w_{mj_0})
t_1^{w_{1j}-w_{1j_0}}\cdots t_n^{w_{nj}-w_{nj_0}}\Big)(P,\mathbf
a)=0.
\end{eqnarray*}
So we have
\begin{eqnarray*}
(t_1^{-w_{1j_0}}\cdots t_n^{-w_{nj_0}}F)(P,\mathbf a) &=&
(t_1^{-w_{1j_0}}\cdots t_n^{-w_{nj_0}}F_\Gamma)(P,\mathbf a)\\
&=&0. \\
\Big({t_m}\frac{\partial}{\partial t_m}(t_1^{-w_{1j_0}}\cdots
t_n^{-w_{nj_0}}F)\Big)(P,\mathbf a) &=&
\Big({t_m}\frac{\partial}{\partial
t_m}(t_1^{-w_{1j_0}}\cdots t_n^{-w_{nj_0}}F_\Gamma)\Big)(P,\mathbf a)\\
&\not=& 0.
\end{eqnarray*}
On the other hand, $(w_{1j_0}, \cdots, w_{mj_0},0,\ldots, 0)$ lies
in $\mathrm{span}(\Gamma-\Gamma)$, we have
$$(t_1^{w_{1j_0}}\cdots t_m^{w_{mj_0}})(P,\mathbf a)\not=0.$$ It follows
that
\begin{eqnarray*}
&&\Big({t_m}\frac{\partial}{\partial t_m}\Big((t_1^{-w_{1j_0}}\cdots
t_n^{-w_{nj_0}}F) t_1^{w_{1j_0}}\cdots
t_m^{w_{mj_0}}\Big)\Big)(P,\mathbf a)\\ &=&
\Big({t_m}\frac{\partial}{\partial t_m}(t_1^{-w_{1j_0}}\cdots
t_n^{-w_{nj_0}}F)\Big)(P,\mathbf a) \cdot  (t_1^{w_{1j_0}}\cdots
t_m^{w_{mj_0}})(P,\mathbf a)\\
&&\qquad \qquad + (t_1^{-w_{1j_0}}\cdots t_n^{-w_{nj_0}}F)(P,\mathbf
a)\Big({t_m}\frac{\partial}{\partial t_m}( t_1^{w_{1j_0}}\cdots
t_m^{w_{mj_0}})\Big)(P,\mathbf a)\\
&\not=&0.
\end{eqnarray*}
By the Jacobian criterion, $\varphi$ is etale at the $\bar k$-point
$(P,\mathbf a)$ of $U_{\sigma_\Gamma}\times\mathbb A^N$. Let
$$j:\mathrm{Spec}\,k[\mathbb Z^{n-m}][t_1^{\pm 1},\ldots, t_{m-1}^{\pm 1}, t_m]
\hookrightarrow \mathrm{Spec}\, k[\mathbb Z^{n-m}\cap
\mathrm{cone}_\Delta^\circ(\Gamma)][t_1^{\pm 1},\ldots, t_{m-1}^{\pm
1},t_m]$$ be the canonical open immersion, and let $F_0$ be the
morphism
$$\mathrm{Spec}\,k[\mathbb Z^{n-m}][t_1^{\pm 1},\ldots, t_{m-1}^{\pm 1}, t_m]
\to \mathbb A^1, \quad (t_1,\ldots, t_n)\to t_m t_{m+1}^{w_{m+1,
j_0}}\cdots t_n^{w_{nj_0}}.$$ Through the etale morphism $\varphi$,
locally near $(P,\mathbf a)$ with respect to the etale topology, the
pair $\Big((\bar j\times \mathrm{id})_!F^\ast \mathscr L_\psi, \bar
\pi_2\Big)$ is isomorphic to the base change to $\mathbb A^N$ of the
pair $(j_!F_0^\ast \mathscr L_\psi, \mathrm{Spec}\,k[\mathbb
Z^{n-m}\cap \mathrm{cone}_\Delta^\circ(\Gamma)][t_1^{\pm 1},\ldots,
t_{m-1}^{\pm 1},t_m])$.

\medskip
\emph{Case 2.} $(t_1^{-w_{1j_0}}\cdots
t_n^{-w_{nj_0}}F_\Gamma)(P,\mathbf a)\not=0$.

Recall that $w_{nj}=d$ for all $\mathbf w_j\in \Gamma$. Since $p$
does not divide $d$, the canonical morphism
{\small\begin{eqnarray*}
\psi: \mathrm{Spec}\, k[\mathbb Z^n\cap
\mathrm{cone}_\Delta(\Gamma)][x_1,\ldots, x_n][T, T^{-1}]/(T^d-
t_1^{-w_{1j_0}}\cdots t_n^{-w_{nj_0}}F) &\to& \mathrm{Spec}\,
k[\mathbb Z^n\cap \mathrm{cone}_\Delta(\Gamma)][x_1,\ldots,
x_n]\\&=&U_{\sigma_\Gamma}\times \mathbb A^N
\end{eqnarray*}}
is etale over the $\bar k$-point $(P,\mathbf a)$. Consider the
$k$-homomorphism
$$\theta^\natural: k[\mathbb Z^n\cap
\mathrm{cone}_\Delta(\Gamma)][x_1,\ldots, x_n]\to k[\mathbb Z^n\cap
\mathrm{cone}_\Delta(\Gamma)][x_1,\ldots, x_n][T, T^{-1}]/(T^d-
t_1^{-w_{1j_0}}\cdots t_n^{-w_{nj_0}}F)$$ defined by
\begin{eqnarray*}
t_1^{w_1}\cdots t_n^{w_n}&\mapsto& t_1^{w_1}\cdots t_n^{w_n} T^{w_n}
\quad \hbox{for any } \mathbf w=(w_1,\ldots,w_n)\in \mathbb
Z^n\cap \mathrm{cone}_\Delta(\Gamma)\\
g&\mapsto& g \quad\hbox {for any } g\in k[x_1,\ldots, x_n],
\end{eqnarray*}
It induces a $k$-morphism
{\small\begin{eqnarray*}
\theta:
\mathrm{Spec}\, k[\mathbb Z^n\cap
\mathrm{cone}_\Delta(\Gamma)][x_1,\ldots, x_n][T, T^{-1}]/(T^d-
t_1^{-w_{1j_0}}\cdots t_n^{-w_{nj_0}}F)&\to& \mathrm{Spec}\,
k[\mathbb Z^n\cap \mathrm{cone}_\Delta(\Gamma)][x_1,\ldots,
x_n]\\
&=&U_{\sigma_\Gamma}\times \mathbb A^N.
\end{eqnarray*}}
We claim that $\theta$ is etale over $(P,\mathbf a)$. Indeed, we
have a commutative diagram
{\footnotesize$$\begin{array}{ccc} \small
k[\mathbb Z^n\cap \mathrm{cone}_\Delta(\Gamma)][x_1,\ldots, x_n][T,
T^{-1}]/(T^d- \widetilde F)&\stackrel{\cong}\to& k[\mathbb Z^n\cap
\mathrm{cone}_\Delta(\Gamma)][x_1,\ldots, x_n][T, T^{-1}]/(T^d-
t_1^{-w_{1j_0}}\cdots t_n^{-w_{nj_0}}F)\\
\uparrow &&\uparrow \\
k[\mathbb Z^n\cap \mathrm{cone}_\Delta(\Gamma)][x_1,\ldots, x_n][T,
T^{-1}]&\stackrel{\cong}\to&k[\mathbb Z^n\cap
\mathrm{cone}_\Delta(\Gamma)][x_1,\ldots, x_n][T, T^{-1}]\\
\uparrow&&\\
k[\mathbb Z^n\cap \mathrm{cone}_\Delta(\Gamma)][x_1,\ldots, x_n],
\end{array}$$}
where the vertical arrows are canonical homomorphisms, and
horizontal arrows are isomorphisms defined by
\begin{eqnarray*}
t_1^{w_1}\cdots t_n^{w_n}&\mapsto& t_1^{w_1}\cdots t_n^{w_n} T^{w_n}
\quad \hbox{for any } \mathbf w=(w_1,\ldots,w_n)\in \mathbb
Z^n\cap \mathrm{cone}_\Delta(\Gamma),\\
g&\mapsto& g \quad\hbox {for any } g\in k[x_1,\ldots, x_n, T,
T^{-1}],
\end{eqnarray*}
and $\widetilde F$ is the preimage of $t_1^{-w_{1j_0}}\cdots
t_n^{-w_{nj_0}}F$ with respect to the second horizontal arrow, that
is,
\begin{eqnarray*}
\widetilde F &=&t_1^{-w_{1j_0}}\cdots t_n^{-w_{nj_0}}
T^{w_{nj_0}}\Big(\sum_{j=1}^N x_j t_1^{w_{1j}}\cdots
t_n^{w_{nj}}T^{-w_{nj}}\Big)\\
&=& \sum_{j=1}^N x_j t_1^{w_{1j}-w_{1j_0}}\cdots
t_n^{w_{nj}-w_{nj_0}}T^{w_{nj_0}-w_{nj}}.
\end{eqnarray*}
To prove $\theta$ is etale over $(P,\mathbf a)$, it suffices to show
the canonical morphism
$$\theta':\mathrm{Spec}\,
k[\mathbb Z^n\cap \mathrm{cone}_\Delta(\Gamma)][x_1,\ldots, x_n][T,
T^{-1}]/(T^d- \widetilde F) \to \mathrm{Spec}\, k[\mathbb Z^n\cap
\mathrm{cone}_\Delta(\Gamma)][x_1,\ldots, x_n]$$ is etale over
$(P,\mathbf a)$. Indeed, if $\mathbf w_j\in \Delta\backslash\Gamma$,
we have $\mathbf w_j-\mathbf
w_{j_0}\in\mathrm{cone}_\Delta(\Gamma)\backslash\mathrm{span}(\Gamma-\Gamma)$
and hence
$$(t_1^{w_{1j}-w_{1j_0}}\cdots
t_n^{w_{nj}-w_{nj_0}})(P,\mathbf a)=0.$$ If $\mathbf w_j\in\Gamma$,
then $w_{nj}=w_{nj_0}=d$ and hence ${w_{nj_0}-w_{nj}}=0.$ In any
case, for any $j=1,\ldots, N$, we have
\begin{eqnarray*}
&&\Big(\frac{\partial}{\partial T}(x_jt_1^{w_{1j}-w_{1j_0}}\cdots
t_n^{w_{nj}-w_{nj_0}}T^{w_{nj_0}-w_{nj}})\Big)(P,\mathbf a)\\&=&
\Big(({w_{nj_0}-w_{nj}})x_j t_1^{w_{1j}-w_{1j_0}}\cdots
t_n^{w_{nj}-w_{nj_0}}T^{w_{nj_0}-w_{nj}-1}\Big)(P,\mathbf a)\\
&=&0.
\end{eqnarray*}
It follows that
\begin{eqnarray*}
&&\Big(\frac{\partial}{\partial T}(T^d-\widetilde F)\Big)(P,\mathbf a)\\
&=&\Big(\frac{\partial}{\partial T}\Big(T^d-\sum_{j=1}^N x_j
t_1^{w_{1j}-w_{1j_0}}\cdots
t_n^{w_{nj}-w_{nj_0}}T^{w_{nj_0}-w_{nj}}\Big)\Big)(P,\mathbf a)\\
&=& dT^{d-1}.
\end{eqnarray*}
By the Jacobian criterion, $\theta'$ is etale over $(P,\mathbf a)$.
This proves our claim. Let $$j:\mathbb T^n=\mathrm{Spec}\,k[\mathbb
Z^n] \hookrightarrow \mathrm{Spec}\, k[\mathbb Z^n\cap
\mathrm{cone}_\Delta(\Gamma)]$$ be the canonical open immersion, and
let $F_0$ be the morphism
$$\mathbb T^n\to \mathbb A^1, \quad (t_1,\ldots, t_n)\to t_1^{w_{1j_0}}
\cdots t_n^{w_{nj_0}}.$$ Through the etale morphisms $\psi$ and
$\theta$, locally near $(P,\mathbf a)$ with respect to the etale
topology, the pair $\Big((\bar j\times \mathrm{id})_!F^\ast \mathscr
L_\psi, \bar \pi_2\Big)$ is isomorphic to the base change to
$\mathbb A^N$ of the pair $(j_!F_0^\ast \mathscr L_\psi,
\mathrm{Spec}\,k[\mathbb Z^n\cap \mathrm{cone}_\Delta(\Gamma)])$.
\end{proof}

\section{Proof of Theorem 0.8}

Suppose $A$ satisfies the nonconfluence condition, that is, there
exist integers $c_1,\ldots, c_n\in\mathbb Z$ such that
$$\sum_{i=1}^n c_iw_{ij}=1 \quad (j=1,\ldots, N).$$ Note that
$c_1,\ldots, c_n\in\mathbb Z$ are necessarily relatively prime. This
implies that the quotient group $\mathbb Z^n/\{m(c_1,\ldots,
c_n)|m\in\mathbb Z\}$ is free. Therefore $\{m(c_1,\ldots,
c_n)|m\in\mathbb Z\}$ is a direct factor of $\mathbb Z^n$, and
$(c_1,\ldots, c_n)$ can be extended to a basis of $\mathbb Z^n$. So
we can find an $(n\times n)$-matrix $C=(c_{ij})$ with integer
entries so that its row vectors form a basis of $\mathbb Z^n$ and
that its first row is exactly $(c_1,\ldots, c_n)$. The matrix $C$ is
an invertible matrix in $\mathrm{GL}(n,\mathbb Z)$, and
$$\sum_{i=1}^n c_{1i} w_{ij}=1 \quad (j=1,\ldots,N).$$ Set
$$w_{kj}'=\sum_{i=1}^n c_{ki}w_{ij} \quad (k=2,\ldots, n,\;
j=1,\ldots, N).$$ We first give an heuristic argument explaining the
main idea of the proof of Theorem 0.8. Make the change of variables
$$t_1=s_1^{c_{11}}\cdots s_n^{c_{n1}},\ldots,
t_n=s_1^{c_{1n}}\cdots s_n^{c_{nn}},$$ and let
$$\chi_i'=\chi_1^{c_{i1}}\cdots \chi_n^{c_{in}}\quad (i=1,\ldots,
n).$$ Then we can evaluate the GKZ hypergeometric sum as follows:
\begin{eqnarray*}
&&\mathrm{Hyp}_\psi(x_1,\ldots, x_N;\chi_1,\ldots,\chi_n) \\
&=& \sum_{t_1,\ldots, t_n\in
k^\ast}\chi_1(t_1)\cdots\chi_n(t_n)\psi\Big(\sum_{j=1}^N x_j
t_1^{w_{1j}}\cdots t_n^{w_{nj}}\Big)\\
&=& \sum_{s_1,\ldots, s_n\in k^\ast}\chi_1(s_1^{c_{11}}\cdots
s_n^{c_{n1}})\cdots\chi_n(s_1^{c_{1n}}\cdots
s_n^{c_{nn}})\psi\Big(\sum_{j=1}^Nx_j s_1^{\sum_{i=1}^n
c_{1i}w_{ij}}
\cdots  s_n^{\sum_{i=1}^n c_{ni}w_{ij}}\Big)\\
&=& \sum_{s_1,\ldots, s_n\in k^\ast}\chi'_1(s_1)\cdots
\chi'_n(s_n)\psi\Big(\sum_{j=1}^Nx_js_1s_2^{w'_{2j}}\cdots
s_n^{w'_{nj}} \Big)\\ &=& \sum_{s_2,\ldots, s_n\in
k^\ast}\chi'_2(s_2)\cdots \chi'_n(s_n)\sum_{s_1\in\mathbb
F_q^\ast}\chi_1'(s_1) \psi\Big(s_1\sum_{j=1}^Nx_j
s_2^{w'_{2j}}\cdots
s_n^{w'_{nj}}\Big)\\
&=&\Big(\sum_{s_2,\ldots, s_n\in k^\ast}\chi'_2(s_2)\cdots
\chi'_n(s_n) \chi_1^{\prime
-1}\Big(\sum_{j=1}^Nx_js_2^{w'_{2j}}\cdots s_n^{w'_{nj}} \Big)\Big)
g(\chi'_1,\psi),
\end{eqnarray*}
where $g(\chi'_1,\psi)=\sum_{t\in k^\ast} \chi_1'(t)\psi(t)$ is the
Gauss sum, and the last equality holds if $\chi_1'$ is nontrivial.
This shows that under this condition, the GKZ hypergeometric sum is
a product of the Gauss sum and a sum involving only multiplicative
characters. We now give the proof of Theorem 0.8.

\medskip
\noindent {\it Proof of Theorem 0.8.} Keep the notation above. The
morphism
\begin{eqnarray*}
H:\mathbb T^n&\to&\mathbb T^n, \\
H(s_1,\ldots, s_n)&=& (s_1^{c_{11}}\cdots s_n^{c_{n1}},\ldots,
s_1^{c_{1n}}\cdots s_n^{c_{nn}})
\end{eqnarray*}
is an isomorphism. Fix notations by the following diagram
$$\begin{array}{ccrcc}
&&\mathbb T^n\times\mathbb A^N&&\\
&&{\scriptstyle H\times {\mathrm{id}_{\mathbb A^N}}}\downarrow&&\\
\mathbb T^n&\stackrel{\pi_1}\leftarrow&\mathbb T^n\times\mathbb A^N&
\stackrel{\pi_2}\to& \mathbb A^N,\\
&&{\scriptstyle F}\downarrow&& \\
&&\mathbb A^1&&
\end{array}$$
where $\pi_1$ and $\pi_2$ are projections, and $F$ is the morphism
defined by
$$F(t_1,\ldots, t_n, x_1,\ldots,
x_N)=\sum_{j=1}^Nx_jt_1^{w_{1j}}\cdots t_n^{w_{nj}}.$$ We have
$\pi_2(H\times\mathrm{id}_{\mathbb A^N})=\pi_2$,
$\pi_1(H\times\mathrm{id}_{\mathbb A^N})=H\pi_1$, and
\begin{eqnarray*}
&&F(H\times\mathrm{id}_{\mathbb A^N})(s_1,\ldots, s_n, x_1,\ldots, x_N)\\
&=&\sum_{j=1}^Nx_j(s_1^{c_{11}}\cdots s_n^{c_{n1}})^{w_{1j}}\cdots
(s_1^{c_{1n}}\cdots s_n^{c_{nn}})^{w_{nj}}\\
&=& \sum_{j=1}^Nx_j s_1^{\sum_{i=1}^nc_{1i}w_{ij}}\cdots
s_n^{\sum_{i=1}^nc_{ni}w_{ij}}\\
&=&s_1\Big(\sum_{j=1}^N x_j s_2^{w'_{2j}}\cdots
s_{n}^{w'_{nj}}\Big).
\end{eqnarray*}
Let $\mathbb A'^1$ be the dual affine line of $\mathbb A^1$, let
$G:\mathbb T^{n-1}\times\mathbb A^N\to\mathbb A'^1$ be the morphism
defined by
$$G(s_2,\ldots, s_{n},x_1,\ldots, x_N)=\sum_{j=1}^N x_j s_2^{w'_{2j}}\cdots
s_{n}^{w'_{nj}},$$ and let $$\langle\, ,\rangle:\mathbb
A^1\times\mathbb A'^1\to\mathbb A^1, \quad (s,\xi)\mapsto s\xi$$ be
the pairing between $\mathbb A^1$ and $\mathbb A'^1$. Denote the
restriction of $\langle\, ,\, \rangle$ to $\mathbb T^1\times\mathbb
A^{\prime 1}$ also by $\langle\, ,\, \rangle$. Then we have
$$F(H\times \mathrm{id}_{\mathbb A^N})=\langle\, ,\,\rangle\circ(\mathrm{id}_{\mathbb
T^1}\times G).$$  Moreover, we have
$$H^\ast (\mathscr K_{\chi_1}\boxtimes\cdots\boxtimes \mathscr
K_{\chi_n})\cong \mathscr K_{\chi_1'}\boxtimes\cdots\boxtimes
\mathscr K_{\chi_n'}.$$ So we have
\begin{eqnarray*}
&&\mathrm{Hyp}_\psi(\chi)\\
&=& R\pi_{2!}\Big(\pi_1^\ast(\mathscr
K_{\chi_1}\boxtimes\cdots\boxtimes \mathscr K_{\chi_n})\otimes
F^\ast\mathscr L_\psi \Big)[n+N] \\
&\cong& R(\pi_2(H\times\mathrm{id}_{\mathbb
A^N}))_!\Big((H\times\mathrm{id}_{\mathbb A^N})^\ast
\pi_1^\ast(\mathscr K_{\chi_1}\boxtimes\cdots\boxtimes \mathscr
K_{\chi_n})\otimes (H\times \mathrm{id}_{\mathbb A^N})^\ast
F^\ast\mathscr
L_\psi\Big)[n+N]\\
&\cong& R\pi_{2!}\Big(\pi_1^\ast H^\ast(\mathscr
K_{\chi_1}\boxtimes\cdots\boxtimes \mathscr
K_{\chi_n})\otimes(F(H\times\mathrm{id}_{\mathbb A^N}))^\ast\mathscr
L_\psi
\Big)[n+N]\\
&\cong& R\pi_{2!}\Big(\pi_1^\ast(\mathscr
K_{\chi_1'}\boxtimes\cdots\boxtimes \mathscr
K_{\chi_n'})\otimes(\langle\, ,\,\rangle\circ(\mathrm{id}_{\mathbb
T^1}\times G))^\ast\mathscr L_\psi \Big)[n+N].
\end{eqnarray*}
Fix notation by the following commutative diagram
$$\begin{array}{ccrcl}
\mathbb T^1\stackrel{\pi_1^{(0)}}\leftarrow &&{\mathbb
T^1}\times\mathbb T^{n-1}\times\mathbb A^N&\cong& \mathbb
T^n\times\mathbb A^N\\
&&{\scriptstyle \pi_2^{(0)}}\downarrow&&\downarrow{\scriptstyle
\pi_2}\\
&&\mathbb T^{n-1}\times\mathbb A^N&\stackrel{\pi_2^{(1)}}\to&
\mathbb
A^N\\
&&{\scriptstyle \pi_1^{(1)}}\downarrow&& \\
&&\mathbb T^{n-1}\end{array}$$ where all arrows are projections. By
the projection formula, we have
\begin{eqnarray*}
&&\mathrm{Hyp}_\psi(\chi)\\
&\cong& R\pi_{2!}\Big(\pi_1^\ast(\mathscr
K_{\chi_1'}\boxtimes\cdots\boxtimes \mathscr
K_{\chi_n'})\otimes(\langle\, ,\,\rangle\circ(\mathrm{id}_{\mathbb
T^1}\times G))^\ast\mathscr L_\psi \Big)[n+N]\\
&\cong&
R\pi^{(1)}_{2!}R\pi^{(0)}_{2!}\Big(\pi_2^{(0)\ast}\pi_1^{(1)\ast}(\mathscr
K_{\chi_2'}\boxtimes\cdots\boxtimes \mathscr K_{\chi_n'})\otimes
\pi_1^{(0)\ast}\mathscr K_{\chi'_1}\otimes(\langle\,
,\,\rangle\circ(\mathrm{id}_{\mathbb T^1}\times G))^\ast\mathscr
L_\psi \Big)[n+N]
\\
&\cong& R\pi^{(1)}_{2!}\Big(\pi^{(1)\ast}_1(\mathscr
K_{\chi_2'}\boxtimes\cdots\boxtimes \mathscr K_{\chi_{n}'})\otimes
R\pi_{2!}^{(0)}\Big(\pi_1^{(0)\ast}\mathscr
K_{\chi'_1}\otimes(\langle\, ,\,\rangle\circ(\mathrm{id}_{\mathbb
T^1}\times G))^\ast\mathscr L_\psi \Big)\Big)[n+N].
\end{eqnarray*} By Lemma 3.1 below, we have an
isomorphism $$R\pi_{2!}^{(0)}\Big(\pi_1^{(0)\ast}\mathscr
K_{\chi'_1}\otimes(\langle\, ,\,\rangle\circ(\mathrm{id}_{\mathbb
T^1}\times G))^\ast\mathscr L_\psi \Big)[1]\cong
G^\ast(FT_\psi(\kappa_!\mathscr K_{\chi'_1})),$$ where
$\kappa:\mathbb T^1\hookrightarrow \mathbb A^1$ is the canonical
open immersion. So we have
$$\mathrm{Hyp}_\psi(\chi)\cong R\pi^{(1)}_{2!}\Big(\pi^{(1)\ast}_1(\mathscr
K_{\chi_2'}\boxtimes\cdots\boxtimes \mathscr K_{\chi_{n}'})\otimes
G^\ast(FT_\psi(\kappa_!\mathscr K_{\chi'_1}))\Big)[n+N-1].$$

First consider the case where $\chi'_1$ is nontrivial. By
\cite[1.4.3]{L}, we have
$$FT_\psi(\kappa_!\mathscr K_{\chi'_1})\cong \kappa_! \mathscr
K_{\chi_1^{\prime -1}}\otimes G(\chi_1',\psi),$$ where
$G(\chi_1',\psi)$ is the rank $1$ lisse sheaf on $\mathrm {Spec}\,k$
so that the geometric Frobenius acts by multiplication by the Gauss
sum $-g(\chi'_1,\psi)=-\sum_{t\in k^\ast} \chi_1'(t)\psi(t)$, and we
denote the inverse image of $G(\chi'_1,\psi)$ on any $k$-scheme also
by $G(\chi'_1,\psi)$. So we have
$$\mathrm{Hyp}_\psi(\chi)
\cong  R\pi^{(1)}_{2!}\Big(\pi^{(1)\ast}_1(\mathscr
K_{\chi_2'}\boxtimes\cdots\boxtimes \mathscr K_{\chi_{n}'})\otimes
G^\ast \kappa_! \mathscr K_{\chi_1^{\prime -1}}\Big)\otimes
G(\chi_1',\psi)[n+N-1].$$ Let $M=q-1$. Denote by $[M]$ the
endomorphisms on $\mathbb T^{n-1}$ and on $\mathbb T^1$ defined by
$x\mapsto x^M$. Then on $\mathbb T^{n-1}$, the sheaf $[M]_\ast
\overline{\mathbb Q}_\ell$ is a direct sum of sheaves $\mathscr
K_{\chi_2'}\boxtimes\cdots\boxtimes \mathscr K_{\chi_{n}'}$, where
$\chi_i'$ $(i=2,\ldots, n)$ run over multiplicative characters of
order dividing $M$, and on $\mathbb T^1$, the sheaf
$[M]_\ast\overline{\mathbb Q}_\ell$ is a direct sum of sheaves
$\mathscr K_{\chi_1'}$, where $\chi_1'$ runs over multiplicative
characters of order dividing $M$. It follows that
$\mathrm{Hyp}_\psi(\chi)$ is a direct factor of
$R\pi^{(1)}_{2!}\Big(\pi^{(1)\ast}_1[M]_\ast \overline{\mathbb
Q}_\ell \otimes G^\ast \kappa_! [M]_\ast \overline{\mathbb
Q}_\ell\Big)\otimes G(\chi_1',\psi)[n+N-1]$. Note that the morphisms
$\pi^{(1)}_2$, $\pi^{(1)}_1$, $[M]$, $G$ and $\kappa$ are all
defined over $\mathbb Z$. So
$R\pi^{(1)}_{2!}\Big(\pi^{(1)\ast}_1[M]_\ast \overline{\mathbb
Q}_\ell \otimes G^\ast \kappa_! [M]_\ast \overline{\mathbb
Q}_\ell\Big)$ defines an object in $D_c^b(\mathbb A_{{\mathbb
Z}[1/\ell]}^N,\overline{\mathbb Q}_\ell)$. Take $\mathscr
H=R\pi^{(1)}_{2!}\Big(\pi^{(1)\ast}_1[M]_\ast \overline{\mathbb
Q}_\ell \otimes G^\ast \kappa_! [M]_\ast \overline{\mathbb
Q}_\ell\Big)[n+N-1]$. It has the required property in Theorem 0.8
(i).

Next consider the case where $\chi'_1=1$ is trivial. In this case,
by Lemma 3.2 below, we have
$$FT_\psi(\kappa_! \overline{\mathbb Q}_\ell)=
(R\kappa_{\mathbb Z[1/\ell],\ast}\overline{\mathbb
Q}_\ell)|_{\mathbb A'^1_k},$$ where $\kappa_{\mathbb
Z[1/\ell]}:\mathbb T^1_{\mathbb Z[1/\ell]}\to \mathbb A'^1_{\mathbb
Z[1/\ell]}$ is the complement of the zero section of the the (dual)
affine line $\mathbb A'^1_{\mathbb Z[1/\ell]}$ over $\mathbb
Z[1/\ell]$. So we have
$$\mathrm{Hyp}_\psi(\chi)\cong R\pi^{(1)}_{2!}\Big(\pi^{(1)\ast}_1(\mathscr
K_{\chi_2'}\boxtimes\cdots\boxtimes \mathscr K_{\chi_{n}'})\otimes
G^\ast (R\kappa_{\mathbb Z[1/\ell],\ast}\overline{\mathbb
Q}_\ell)|_{\mathbb A'^1_k}\Big)[n+N-1].$$ The same argument as above
shows that $\mathrm{Hyp}_\psi(\chi)$ is a direct factor of
$$R\pi^{(1)}_{2!}\Big(\pi^{(1)\ast}_1[M]_\ast \overline{\mathbb
Q}_\ell \otimes G^\ast(R\kappa_{\mathbb
Z[1/\ell],\ast}\overline{\mathbb Q}_\ell)|_{\mathbb
A'^1_k}\Big)[n+N-1].$$ Again the morphisms $\pi^{(1)}_2$,
$\pi^{(1)}_1$, $[M]$ and $G$ are all defined over $\mathbb Z$, and
$R\pi^{(1)}_{2!}\Big(\pi^{(1)\ast}_1[M]_\ast \overline{\mathbb
Q}_\ell \otimes G^\ast R\kappa_{\mathbb
Z[1/\ell],\ast}\overline{\mathbb Q}_\ell\Big)$ defines an object in
$D_c^b(\mathbb A_{{\mathbb Z}[1/\ell]}^N,\overline{\mathbb
Q}_\ell)$. Take $\mathscr
H=R\pi^{(1)}_{2!}\Big(\pi^{(1)\ast}_1[M]_\ast \overline{\mathbb
Q}_\ell \otimes G^\ast R\kappa_{\mathbb
Z[1/\ell],\ast}\overline{\mathbb Q}_\ell\Big)[n+N-1]$. It has the
required property in Theorem 0.8 (ii).

\begin{lemma} Notation as above. We have an
isomorphism
$$R\pi_{2!}^{(0)}\Big(\pi_1^{(0)\ast}\mathscr
K_{\chi'_1}\otimes(\langle\, ,\,\rangle\circ(\mathrm{id}_{\mathbb
T^1}\times G))^\ast\mathscr L_\psi \Big)[1]\cong
G^\ast(FT_\psi(\kappa_!\mathscr K_{\chi'_1})),$$
\end{lemma}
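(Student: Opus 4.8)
The plan is to recognize the asserted isomorphism as an instance of the compatibility of the Deligne-Fourier transform with base change. Here $FT_\psi$ is the Fourier transform $D_c^b(\mathbb A^1,\overline{\mathbb Q}_\ell)\to D_c^b(\mathbb A'^1,\overline{\mathbb Q}_\ell)$, so that for the two projections $p_1:\mathbb A^1\times\mathbb A'^1\to\mathbb A^1$ and $p_2:\mathbb A^1\times\mathbb A'^1\to\mathbb A'^1$ we have $FT_\psi(K)=Rp_{2!}(p_1^\ast K\otimes\langle\,,\,\rangle^\ast\mathscr L_\psi)[1]$, the pairing being $(s,\xi)\mapsto s\xi$.

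First I would rewrite $FT_\psi(\kappa_!\mathscr K_{\chi'_1})$ more economically. Applying the proper base change theorem to the Cartesian square formed by $\kappa:\mathbb T^1\hookrightarrow\mathbb A^1$ and $p_1$ gives $p_1^\ast(\kappa_!\mathscr K_{\chi'_1})\cong(\kappa\times\mathrm{id})_!(\mathrm{pr}_1^\ast\mathscr K_{\chi'_1})$, where $\mathrm{pr}_1:\mathbb T^1\times\mathbb A'^1\to\mathbb T^1$ is the projection. Combining this with the projection formula, with $Rp_{2!}\circ(\kappa\times\mathrm{id})_!=R\mathrm{pr}_{2!}$ for the projection $\mathrm{pr}_2:\mathbb T^1\times\mathbb A'^1\to\mathbb A'^1$, and with the observation that $\langle\,,\,\rangle\circ(\kappa\times\mathrm{id})$ is the restriction of $\langle\,,\,\rangle$ to $\mathbb T^1\times\mathbb A'^1$, yields
$$FT_\psi(\kappa_!\mathscr K_{\chi'_1})\cong R\mathrm{pr}_{2!}\big(\mathrm{pr}_1^\ast\mathscr K_{\chi'_1}\otimes\langle\,,\,\rangle^\ast\mathscr L_\psi\big)[1],$$
where now $\langle\,,\,\rangle:\mathbb T^1\times\mathbb A'^1\to\mathbb A^1$ denotes this restricted pairing.

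Next I would base change this identity along $G:\mathbb T^{n-1}\times\mathbb A^N\to\mathbb A'^1$. The fibre product of $\mathrm{pr}_2$ with $G$ is $\mathbb T^1\times\mathbb T^{n-1}\times\mathbb A^N$; the base change of $\mathrm{pr}_2$ is the projection $\pi_2^{(0)}$, and the other projection of this fibre product is $\mathrm{id}_{\mathbb T^1}\times G$; in other words, this is exactly the Cartesian square displayed just before the lemma, after the identification $\mathbb T^1\times\mathbb T^{n-1}\times\mathbb A^N\cong\mathbb T^n\times\mathbb A^N$. Proper base change therefore yields
$$G^\ast FT_\psi(\kappa_!\mathscr K_{\chi'_1})\cong R\pi_{2!}^{(0)}\big((\mathrm{id}_{\mathbb T^1}\times G)^\ast(\mathrm{pr}_1^\ast\mathscr K_{\chi'_1}\otimes\langle\,,\,\rangle^\ast\mathscr L_\psi)\big)[1].$$
It remains to identify the pullback on the right: since $\mathrm{pr}_1\circ(\mathrm{id}_{\mathbb T^1}\times G)=\pi_1^{(0)}$, the first tensor factor becomes $\pi_1^{(0)\ast}\mathscr K_{\chi'_1}$, and since $\langle\,,\,\rangle\circ(\mathrm{id}_{\mathbb T^1}\times G)$ sends $(t,s,x)$ to $t\cdot G(s,x)$, which is exactly the morphism $\langle\,,\,\rangle\circ(\mathrm{id}_{\mathbb T^1}\times G)$ occurring in the statement, the second factor becomes $(\langle\,,\,\rangle\circ(\mathrm{id}_{\mathbb T^1}\times G))^\ast\mathscr L_\psi$. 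Substituting gives the claimed isomorphism.

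I do not expect a genuine obstacle: the whole argument is the standard bookkeeping showing that the Fourier transform commutes with base change, using only the proper base change theorem and the projection formula for $R(-)_!$. The only step warranting attention is checking that the square used in the second base change really coincides with the one drawn just before the lemma, and that the composites $\langle\,,\,\rangle\circ(\kappa\times\mathrm{id})$ and $\langle\,,\,\rangle\circ(\mathrm{id}_{\mathbb T^1}\times G)$ are, respectively, the restricted pairing and the morphism appearing in the lemma; both are immediate upon writing the maps out on points.
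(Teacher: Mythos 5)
Your proof is correct and is essentially the paper's own argument: both rest on exactly the same Cartesian diagram and use only proper base change for $R(-)_!$ and the projection formula. The only difference is the order of the two reductions — you first absorb $\kappa_!$ to rewrite $FT_\psi(\kappa_!\mathscr K_{\chi'_1})$ as a compactly supported pushforward from $\mathbb T^1\times\mathbb A'^1$ and then base change along $G$, whereas the paper base changes along $G$ first and handles $\kappa_!$ afterwards; the two orders commute and yield the same computation.
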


\begin{proof} Fix notation by the
following commutative diagram of Cartesian squares:
$$\begin{array}{crccccclc}
&&&& {\mathbb T}^1\times {\mathbb T}^{n-1}\times\mathbb A^N &&&& \\
&&&\stackrel{\mathrm{id}_{\mathbb T^1}\times G}\swarrow &&\stackrel{\kappa'}\searrow  &&& \\
&&{\mathbb T}^1\times\mathbb A'^1&&&& {\mathbb A}^1\times\mathbb
T^{n-1}\times
\mathbb A^N&& \\
&\stackrel{q_1}\swarrow &&\searrow&&\stackrel{{\mathrm{id}_{\mathbb
A^1}\times G}}
\swarrow &&\stackrel{q_2}\searrow& \\
\mathbb T^1&&&& {\mathbb A}^1\times\mathbb A'^1&&&& {\mathbb T}^{n-1}\times\mathbb A^N.\\
&\stackrel{\kappa}\searrow &&\stackrel{\scriptstyle p_1}\swarrow &&
\stackrel{p_2}\searrow && \stackrel{G}\swarrow &\\
&&{\mathbb A}^1&&&&{\mathbb A}'^1&&\\
&&&\searrow&&\swarrow&&& \\
&&&&\mathrm {Spec}\, k&&&&
\end{array}$$
By the proper base change theorem and the projection formula, we
have
\begin{eqnarray*}
&&G^\ast FT_\psi(\kappa_! \mathscr K_{\chi'_1})\\
&=& G^\ast Rp_{2!}(p_1^\ast \kappa_!\mathscr
K_{\chi'_1}\otimes\langle\, ,\,\rangle^\ast \mathscr L_\psi)[1]\\
&\cong& Rq_{2!}(\mathrm{id}_{\mathbb A^1}\times G)^\ast(p_1^\ast
\kappa_!\mathscr K_{\chi'_1}\otimes\langle\, ,\,\rangle^\ast
\mathscr
L_\psi)[1]\\
&\cong& Rq_{2!}\Big( (p_1(\mathrm{id}_{\mathbb A^1}\times G))^\ast
\kappa_!\mathscr K_{\chi'_1}\otimes (\mathrm{id}_{\mathbb A^1}\times
G)^\ast\langle\, ,\,\rangle^\ast\mathscr L_\psi\Big)[1]\\
&\cong& Rq_{2!}\Big(\kappa'_! (q_1(\mathrm{id}_{\mathbb T^1}\times
G))^\ast \mathscr K_{\chi'_1}\otimes (\mathrm{id}_{\mathbb
A^1}\times
G)^\ast\langle\, ,\,\rangle^\ast\mathscr L_\psi\Big)[1]\\
&\cong& Rq_{2!}\kappa'_!\Big((q_1(\mathrm{id}_{\mathbb T^1}\times
G))^\ast \mathscr K_{\chi'_1}\otimes \kappa'^\ast
(\mathrm{id}_{\mathbb A^1}\times
G)^\ast\langle\, ,\,\rangle^\ast\mathscr L_\psi\Big)[1]\\
&\cong&R\pi_{2!}^{(0)}\Big(\pi_1^{(0)\ast}\mathscr
K_{\chi'_1}\otimes(\langle\, ,\,\rangle\circ(\mathrm{id}_{\mathbb
T^1}\times G))^\ast\mathscr L_\psi \Big)[1].
\end{eqnarray*}
This proves our assertion.
\end{proof}

\begin{lemma} For any commutative ring $R$, let $0_R: \mathrm{Spec}\, R\to\mathbb
A'^1_R$ be the zero section of the (dual) affine line $\mathbb
A'^1_R$, and let $\kappa_R: \mathbb T^1_R\to\mathbb A'^1_R$ be the
complement of the zero section. Then we have
$$FT_\psi(\kappa_{k,!} \overline{\mathbb Q}_\ell[1])\cong
(R\kappa_{\mathbb Z[1/\ell],\ast}\overline{\mathbb
Q}_\ell[1])|_{\mathbb A'^1_k}.$$
\end{lemma}

\begin{proof} First we work over the base $k$ and omit the subscript
$k$ from our notation. We have a short exact sequence
$$0\to \kappa_! \overline{\mathbb Q}_\ell \to \overline{\mathbb Q}_\ell\to
0_\ast\overline{\mathbb Q_\ell}\to 0,$$ where
$0:\mathrm{Spec}\,k\to\mathbb A^1$ is the origin of $\mathbb A^1$.
It gives rise to a distinguished triangle
$$0_\ast \overline{\mathbb Q}_\ell\to \kappa_! \overline{\mathbb Q}_\ell[1] \to \overline{\mathbb Q}_\ell[1]\to
$$ in $D_c^b(\mathbb
A^1,\overline{\mathbb Q}_\ell)$. Taking the Deligne-Fourier
transformation, we get a distinguished triangle
$$FT_\psi(0_\ast \overline{\mathbb Q}_\ell)\to FT_\psi(\kappa_! \overline{\mathbb Q}_\ell[1])
\to FT_\psi(\overline{\mathbb Q}_\ell[1])\to
$$ in $D_c^b(\mathbb
A'^1,\overline{\mathbb Q}_\ell)$. By \cite[Proposition 1.2.3.1]{L},
we have
$$FT_\psi(0_\ast\overline{\mathbb Q}_\ell)\cong\overline{\mathbb
Q}_\ell[1],\quad FT_\psi(\overline{\mathbb Q}_\ell[1])\cong 0_\ast
\overline{\mathbb Q}_\ell(-1).$$ So we have a distinguished triangle
$$\overline{\mathbb
Q}_\ell[1]\to FT_\psi(\kappa_! \overline{\mathbb Q}_\ell[1])\to
0_\ast \overline{\mathbb Q}_\ell(-1)\to.$$ All vertices of this
distinguished triangle are perverse sheaves. So we have a short
exact sequence of perverse sheaves
$$0\to \overline{\mathbb
Q}_\ell[1]\to FT_\psi(\kappa_! \overline{\mathbb Q}_\ell[1])\to
0_\ast \overline{\mathbb Q}_\ell(-1)\to 0.$$ This shows that $
FT_\psi(\kappa_! \overline{\mathbb Q}_\ell[1])$ is an extension of
$0_\ast \overline{\mathbb Q}_\ell(-1)$ by $\overline{\mathbb
Q}_\ell[1]$ in the category of perverse sheaves. This extension is
nontrivial, that is, $FT_\psi(\kappa_! \overline{\mathbb
Q}_\ell[1])$ is not isomorphic $\overline{\mathbb
Q}_\ell[1]\bigoplus 0_\ast\overline{\mathbb Q}_\ell(-1)$. Otherwise,
by the Fourier inversion formula (\cite[Corollaire 1.2.2.3]{L}), we
would have $\kappa_! \overline{\mathbb Q}_\ell[1]\cong 0_\ast
\overline{\mathbb Q}_\ell\bigoplus \overline{\mathbb Q}_\ell[1].$
But this is not true. Thus $FT_\psi(\kappa_! \overline{\mathbb
Q}_\ell[1])$ is isomorphic to the mapping cone of a nonzero morphism
$$0_\ast \overline{\mathbb Q}_\ell(-1)[-1]\to \overline{\mathbb
Q}_\ell[1].$$ In $D_c^b({\mathbb A}^{\prime 1},\overline{\mathbb
Q}_\ell)$, we have
\begin{eqnarray*}
\mathrm{Hom}(0_\ast \overline{\mathbb Q}_\ell(-1)[-1],
\overline{\mathbb Q}_\ell[1])&\cong& \mathrm{Hom}(\overline{\mathbb
Q}_\ell(-1)[-1], R0^!\overline{\mathbb Q}_\ell[1])\\
&\cong& \mathrm{Hom}(\overline{\mathbb Q}_\ell(-1)[-1],
\overline{\mathbb Q}_\ell(-1)[-1])\\
&\cong& \overline{\mathbb Q}_\ell.
\end{eqnarray*}
It follows that any two nonzero morphisms $\phi_1,\phi_2:0_\ast
\overline{\mathbb Q}_\ell(-1)[-1]\to \overline{\mathbb Q}_\ell[1]$
differ by a nonzero scalar $\lambda\in \overline{\mathbb
Q}_\ell^\ast$. We have a commutative diagram
$$\begin{array}{ccc}
0_\ast \overline{\mathbb Q}_\ell(-1)[-1]&\stackrel{\phi_1}\to&
\overline{\mathbb Q}_\ell[1]\\
{\scriptstyle \mathrm{id}}\downarrow{\scriptstyle\cong}
&&{\scriptstyle\cong}\downarrow {\scriptstyle
\lambda}\\
0_\ast \overline{\mathbb Q}_\ell(-1)[-1]&\stackrel{\phi_2}\to&
\overline{\mathbb Q}_\ell[1].
\end{array}$$
By the axiom of triangulated categories, this diagram can be
extended to an isomorphism from the mapping cone of $\phi_1$ to the
mapping cone of $\phi_2$. Thus $FT_\psi(\kappa_! \overline{\mathbb
Q}_\ell[1])$ is isomorphic to the mapping cone of any nonzero
morphism $0_\ast \overline{\mathbb Q}_\ell(-1)[-1]\to
\overline{\mathbb Q}_\ell[1],$ or equivalently, $ FT_\psi(\kappa_!
\overline{\mathbb Q}_\ell[1])$ is isomorphic to any nontrivial
extension of $0_\ast\overline{\mathbb Q}_\ell(-1)$ by
$\overline{\mathbb Q}_\ell[1]$.

On the other hand, we have a canonical distinguished triangle
$$0_\ast R0^! \overline{\mathbb Q}_\ell\to \overline{\mathbb
Q}_\ell\to R\kappa_\ast \kappa^\ast \overline{\mathbb Q}_\ell\to.$$
It gives rise to a non-splitting short exact sequence of perverse
sheaves
$$0\to \overline{\mathbb Q}_\ell[1] \to R\kappa_\ast \overline{\mathbb Q}_\ell[1]
\to  0_\ast \overline{\mathbb Q}_\ell(-1)\to 0.$$ By the above
discussion, we must have $$FT_\psi(\kappa_! \overline{\mathbb
Q}_\ell[1])\cong  R\kappa_\ast \overline{\mathbb Q}_\ell[1].$$ One
can prove $$(R\kappa_{\mathbb Z[1/\ell],\ast}\overline{\mathbb
Q}_\ell[1])|_{\mathbb A'^1_k}\cong R\kappa_{k,\ast}\overline{\mathbb
Q}_\ell[1].$$ Our assertion follows.
\end{proof}

\end{document}